\newtheorem{thm}{Theorem}[section]
\newtheorem{lem}[thm]{Lemma}
\newtheorem{cor}[thm]{Corollary}
\newtheorem{thmx}{Theorem}
\theoremstyle{definition}
\newtheorem*{defi}{Definition}
\newtheorem*{ques}{Question}
\newtheorem*{rmk}{Remark}
\newtheorem{prob}{Problem}
\newcommand{\EC}{\widehat{\mathbb{C}}}
\newcommand{\C}{\mathbb{C}}
\newcommand{\D}{\mathbb{D}}
\newcommand{\T}{\mathbb{T}}
\newcommand{\R}{\mathbb{R}}
\newcommand{\Z}{\mathbb{Z}}
\newcommand{\A}{\mathbb{A}}
\newcommand{\Q}{\mathbb{Q}}
\newcommand{\ii}{\textup{i}}
\newcommand{\Int}{\textup{int}}
\newcommand{\Ext}{\textup{ext}}
\newcommand{\im}{\textup{Im\,}}
\newcommand{\Mod}{\textup{mod}}
\newcommand{\area}{\textup{area}}
\newcommand{\HT}{\textup{HT}}
\newcommand{\MA}{\mathcal{A}}
\newcommand{\MB}{\mathcal{B}}
\newcommand{\ME}{\mathcal{E}}
\newcommand{\MF}{\mathcal{F}}
\newcommand{\MJ}{\mathcal{J}}
\newcommand{\MK}{\mathcal{K}}
\newcommand{\MO}{\mathcal{O}}
\newcommand{\MP}{\mathcal{P}}
\newcommand{\MU}{\mathcal{U}}
\newcommand{\MV}{\mathcal{V}}
\newcommand{\MW}{\mathcal{W}}
\newcommand{\PZ}{\mathcal{PZ}}
\newcommand{\PC}{\mathcal{PC}}
\makeatletter\@addtoreset{equation}{section}\makeatother
\begin{document}

\author{Fei Yang}
\address{Department of Mathematics, Nanjing University, Nanjing 210093, P. R. China}
\email{yangfei@nju.edu.cn}

\title[Smooth degenerate Herman rings]{Rational maps with smooth degenerate Herman rings}

\begin{abstract}
We prove the existence of rational maps having smooth degenerate Herman rings. This answers a question of Eremenko affirmatively.
The proof is based on the construction of smooth Siegel disks by Avila, Buff and Ch\'{e}ritat as well as the classical Siegel-to-Herman quasiconformal surgery.
A crucial ingredient in the proof is the surgery's continuity, which relies on the control of the loss of the area of quadratic filled-in Julia sets by Buff and Ch\'{e}ritat.

As a by-product, we prove the existence of rational maps having a nowhere dense Julia set of positive area for which these maps have no irrationally indifferent periodic points, no Herman rings, and are not renormalizable.
\end{abstract}

\subjclass[2020]{Primary 37F10; Secondary 37F31, 37F50}

\keywords{Herman ring; Siegel disk; quasiconformal surgery; invariant curve; area}

\date{\today}

\maketitle



\section{Introduction}

\subsection{Backgrounds}

The idea of studying invariant curves of rational maps dates back to 1920s. In the last paragraph of \cite{Fat20b}, Fatou wrote\footnote{Fatou's original text was written in French.} ``It would remain for us to study the \textit{invariant analytic curves} of a rational transformation and whose study is intimately linked to that of the functions studied in this chapter. We hope to return there soon." However, in his published work, Fatou never returned to this topic as far as we know.

After Siegel and Arnold-Herman's remarkable work on the linearization of holomorphic functions \cite{Sie42}, \cite{Her79}, the invariant analytic Jordan curves were known to be exist in Siegel disks and Herman rings.
In 1989, Azarina showed that if a non-linear entire function has an invariant analytic Jordan curve, then either this curve is a circle and the function is conjugate to $z\mapsto z^n$ for some integer $n\geqslant 2$, or it is an invariant curve in a Siegel disk of the function \cite{Aza89}.

A natural question is: besides circles, invariant curves in Siegel disks and Herman rings, are there any other forms of invariant analytic Jordan curves?
Eremenko found two types of analytic Jordan curves which are not circles and invariant under the Latt\'{e}s maps, and moreover, the restriction of the map on each of these curves is not a homeomorphism \cite{Ere12}.

If the invariant Jordan curves are allowed to be \textit{smooth} of some degree but not necessarily analytic, then the related results become richer.
For example, let $f$ be a rational map having the expansion $f(z)=z+a z^2+\mathcal{O}(z^3)$ near the parabolic fixed point $0$ with $a\neq 0$. Then the union of $0$ and the immediate parabolic basin of $0$ contains uncountably many invariant $C^1$-smooth Jordan curves, which are the level curves $\{z\in\EC\,:\,\im \varphi(z)=\text{constants}\}$ of the attracting Fatou coordinate $\varphi$ satisfying $\varphi(f(z))=\varphi(z)+1$. In fact, these curves are analytic everywhere except possibly at $0$ (see \cite[\S 4]{Aza89}).

For rational maps or transcendental meromorphic functions defined on $\C$, the boundaries of rotation domains cannot be analytic curves by Schwarz reflection. However, they can be smooth curves. Indeed, there exist Siegel disks in many families of analytic maps whose boundaries are $C^\infty$-smooth (see \cite{Per97a}, \cite{ABC04}, \cite{Gey08} and \cite{ABC20}), and there also exist Herman rings whose boundaries are $C^\infty$-smooth \cite{Avi03}.
Moreover, Buff and Ch\'{e}ritat proved the existence of quadratic Siegel disks whose boundaries have various degrees of smoothness, e.g., $C^n$ but not $C^{n+1}$ for any positive integer $n$, and $C^0$ but not H\"{o}lder etc \cite{BC07}.
In the following, when we mention ``smooth", it means $C^\infty$-smooth if not otherwise specified.

\begin{defi}
A Jordan curve $\gamma\subset\EC$ is called a \textit{degenerate Herman ring} of a meromorphic function $f$ (rational or transcendental) if $\gamma$ is \textit{not a spherical circle} and satisfies the following properties:
\begin{itemize}
\item $\gamma$ is contained in the Julia set $J(f)$ of $f$;
\item $\gamma$ is not a boundary component of any Siegel disk or Herman ring of $f$;
\item $f(\gamma)=\gamma$ and $f:\gamma\to\gamma$ is conjugate to an irrational rotation.
\end{itemize}
\end{defi}

It seems that the term ``degenerate Herman ring" appeared first in \cite{FH06}, where $\gamma$ is allowed to be a circle.
In fact, there are a lot of rational maps having a circle satisfying the above three properties. As an example, for any irrational number $\alpha\in\R/\Z$, there exists a unique irrational $t=t(\alpha)\in\R/\Z$ such that the unit circle $\T$ is invariant under
\begin{equation}
B_{\alpha}(z):=e^{2\pi\ii t}z^2\frac{z-3}{1-3z},
\end{equation}
and that $B_\alpha:\T\to\T$ is conjugate to the irrational rotation $R_\alpha(\zeta):=e^{2\pi\ii\alpha}\zeta$ (see \cite[p.\,68]{MS93} and \cite{Yoc84b}). Note that $B_\alpha$ has a critical point on $\T$ and it is easy to check that $B_\alpha:\T\to\T$ satisfies the above three properties.

By performing quasiconformal surgery in the Fatou set of $B_\alpha$, e.g., changing the multiplier of the super-attracting fixed point $0$ to $\lambda$ with $0<|\lambda|<1$ (see \cite[\S 4.2]{BF14a}), it is easy to see that the new rational map has a degenerate Herman ring which is a quasi-circle containing a double critical point (It cannot be a circle since the map has lost the symmetry).
This gives another type of invariant Jordan curve which is different from the known examples but the method is somewhat trivial, and moreover, the invariant curve cannot be smooth due to the presence of critical point on the curve.
Hence a natural problem is to find degenerate Herman rings which cannot be obtained from such a quasiconformal deformation.

\subsection{Main results}

In the open problems session of the conference ``On Geometric Complexity of Julia Sets II" held in B\c{e}dlewo, August 2020, Eremenko raised the following question\footnote{The original statement in \cite{Ere20} misses the adjective ``smooth". After communicating with Alexandre Eremenko we realized that a meaningful and difficult question is about the existence of smooth degenerate Herman rings.} (see \cite[Question 1.7, p.\,2]{Ere20}):

\begin{ques}[Eremenko]
Are there smooth degenerate Herman rings?
\end{ques}

Note that if one applies the previous quasiconformal surgery to $B_\alpha$ directly (i.e., changing the multiplier of the origin such that the resulting map loses the symmetry), the smoothness of the circle under a quasiconformal map is not guaranteed.

Combining the idea of constructing smooth Siegel disks by Avila, Buff and Ch\'{e}ritat \cite{ABC04} and the classical Siegel-to-Herman quasiconformal surgery \cite{Shi87}, we give an affirmative answer to Eremenko's question.

\begin{thmx}\label{thm:main-1}
There exist cubic rational maps having a smooth degenerate Herman ring.
\end{thmx}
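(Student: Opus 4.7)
The plan is to obtain the desired cubic rational map as a limit of a sequence of cubic rational maps $R_n$ produced by Shishikura's Siegel-to-Herman quasiconformal surgery, each carrying a genuine Herman ring $H_n$ with two $C^\infty$-smooth boundary curves and with modulus $m_n$ tending to $0$. Each $R_n$ is built by applying the surgery to a pair of quadratic polynomials $(P_n,Q_n)$ whose Siegel disks have smooth boundaries; to produce such polynomials I would use the Avila--Buff--Ch\'{e}ritat inductive construction from \cite{ABC04}, arranged so that the arithmetic parameters $\alpha_n$ converge to a target $\alpha_\infty$ while the geometry of the Siegel disks degenerates in a controlled way. The passage to the limit $R_n\to R_\infty$, combined with the Buff--Ch\'{e}ritat control of area loss for quadratic filled-in Julia sets mentioned in the abstract, should force the Herman rings $H_n$ to Hausdorff-converge to a single smooth Jordan curve $\gamma\subset J(R_\infty)$, and this $\gamma$ would be the sought degenerate Herman ring.

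In more detail, I would first, using the ABC induction, choose two sequences $(P_n)$ and $(Q_n)$ of quadratic polynomials whose Siegel disks $\Delta_n^P$ and $\Delta_n^Q$ have rotation numbers compatible with the surgery (so that the output has rotation number $\alpha_n$ on the Herman ring), whose boundaries are $C^\infty$-smooth, and whose linearizers extend smoothly up to the boundary. For each $n$, I would excise the two Siegel disks, interpose a rotational collar between the complements, glue via the linearizing coordinates to obtain a quasiregular map, and straighten it to a cubic rational map $R_n$ with a Herman ring $H_n$; smoothness of the linearizers propagates to smoothness of $\partial H_n$. Next I would tune $\alpha_n\to\alpha_\infty$ so that $m_n=\Mod(H_n)\to 0$ while keeping uniform control of the quasiconformal dilatation involved in the straightening. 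After normalization in the moduli space of cubic rational maps, I would extract a convergent subsequence $R_n\to R_\infty$; the two boundary components of $H_n$ must then converge to a common curve $\gamma$. It then remains to verify that $\gamma\subset J(R_\infty)$, that $R_\infty(\gamma)=\gamma$ with $R_\infty|_\gamma$ conjugate to rotation by $\alpha_\infty$, that $\gamma$ is $C^\infty$-smooth (as a $C^\infty$-limit of $\partial H_n$), that $\gamma$ is not a round circle (because the free critical orbit of $R_\infty$ breaks any M\"{o}bius symmetry), and that $\gamma$ is not the boundary of a Siegel disk or Herman ring of $R_\infty$ (by analyzing the remaining Fatou components).

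The main obstacle is the continuity of the surgery as the moduli $m_n$ collapse. Each individual surgery is well-defined, but the quasiconformal straightening depends on a Beltrami coefficient supported on a set whose geometry varies with $n$; when $m_n\to 0$ this geometry degenerates, and a priori the maps $R_n$ could escape to infinity in the moduli space or the curves $\partial H_n$ could fragment rather than fuse into a single Jordan curve. The crucial input preventing this is the Buff--Ch\'{e}ritat quantitative bound on the loss of area of quadratic filled-in Julia sets: it supplies the missing compactness to guarantee that the Beltrami coefficients converge in a controlled way and that $\partial H_n$ Hausdorff-converges to a Jordan curve with the correct dynamics. A secondary technical hurdle is preserving the $C^\infty$-smoothness of $\partial H_n$ uniformly in $n$, so that the limit $\gamma$ is genuinely smooth and not merely continuous; this will require interweaving the ABC smoothness scheme with the parameter choices that force $m_n\to 0$, keeping every finite-order derivative estimate on $\partial\Delta_n^P$ and $\partial\Delta_n^Q$ uniformly bounded along the sequence.
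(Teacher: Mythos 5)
Your high-level blueprint matches the paper's: Siegel-to-Herman surgery on a pair of quadratic polynomials, rotation numbers tuned \`{a} la Avila--Buff--Ch\'{e}ritat so the Herman-ring modulus collapses, Buff--Ch\'{e}ritat area-loss control to keep the surgery continuous, and a normalized limit $R_\infty$ with a smooth invariant curve. However, there is a genuine gap in the way you propose to secure smoothness. You ask for each $(P_n,Q_n)$ to already have $C^\infty$ Siegel boundaries and claim that ``smoothness of the linearizers propagates to smoothness of $\partial H_n$.'' This is false: the boundary components of $H_n$ are $\Phi_n(\partial\Delta_{\alpha_n})$ and $\Phi_n(\eta_r(\partial\Delta_{-\alpha_n}))$, where $\Phi_n$ is the straightening map from the measurable Riemann mapping theorem, and the support of the Beltrami coefficient accumulates on $\partial\Delta_{\alpha_n}$ from outside (preimages of the interpolating collar crowd against the Siegel boundary). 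So $\Phi_n$ is only quasiconformal near $\partial\Delta_{\alpha_n}$, and a smooth curve can be destroyed by such a map. Consequently the ``secondary hurdle'' you flag --- keeping all finite-order derivative bounds on $\partial H_n$ uniform as $m_n\to 0$ --- is not a technical refinement but an approach that does not get off the ground.

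The paper sidesteps this entirely by never needing $\partial H_n$ to be smooth. It works with \emph{bounded-type} (in fact, Petersen--Zakeri type) rotation numbers $\alpha(n)$, for which $\partial H_n$ is merely a quasi-circle through a critical point, but $J(P_{\alpha(n)})$ has zero area --- and this zero-area property is what powers the rigidity lemma (Lemma \ref{lem:rigidity}) that pins down a unique $(a,u)$ for each $(\alpha,r,\theta)$ and lets the maps converge as a well-defined sequence. The smooth curve is then obtained not as a limit of boundary curves of $H_n$ but as the $C^\infty(\R/\Z,\C)$-Fr\'{e}chet limit of the \emph{analytic} core curves $t\mapsto\chi_{\alpha(n),r,\theta}(e^{2\pi\ii t})$, which are conformal images of circles strictly inside each $H_n$. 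Normality of the conformal linearizers gives locally uniform convergence on a fixed sub-annulus (Lemma \ref{lem:Herman}(b)), which upgrades for free to $C^\infty$-Fr\'{e}chet convergence of the restrictions to $\T$; that is the exact mechanism of \cite{ABC04}, ported to Herman rings. You should replace the ``smooth $\partial H_n$'' requirement by: keep $\alpha(n)$ bounded type, fix the collapse radius $r$ and conformal angle $\theta$, and extract the smooth degenerate Herman ring as the $C^\infty$-limit of the analytic invariant curves $\chi_{\alpha(n)}(\T)$, verifying that the limit curve lies in $J(R_\infty)$ and is not a rotation-domain boundary by exhibiting repelling cycles accumulating on it from both sides.
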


In fact, Eremenko's question was asked earlier in \cite[p.\,263]{Ere12} in another stronger form: ``Does there exist an invariant \textit{analytic} Jordan curve of a rational map, different from a circle, which is mapped onto itself homeomorphically and intersects the Julia set?" This question is more challenging for degenerate Herman rings, i.e., do there exist \textit{analytic} degenerate Herman rings? Unfortunately the method in this paper cannot give a conclusion on it.

Since the degenerate Herman rings constructed in this paper are smooth, they cannot contain any critical points. This implies that they cannot be obtained by performing quasiconformal deformation on the cubic Blaschke product $B_\alpha$.

\medskip
After Theorem \ref{thm:main-1}, a question is about the area (i.e., $2$-dimensional Lebesgue measure) of the Julia sets of the maps therein.
We will see that these maps produce Julia sets with positive area that have never appeared before.

Fatou began to study the area of Julia sets of rational maps as early as the beginning of the last century \cite{Fat19}. It was not until 2005 that Buff and Ch\'{e}ritat constructed the first nowhere dense Julia sets of rational maps with positive area\footnote{See the references in \cite{BC12} and \cite{AL22} for the progress of Julia sets with area zero, and \cite{Buf97} for an attempt to study the positive area problem of Julia sets by Nowicki and van Strien for Fibonacci maps of high degree.} \cite{BC12}.
As far as we know, all the known rational maps having a nowhere dense Julia set with positive area belong to the following cases: either they have an irrationally indifferent periodic point, or they are infinitely renormalizable (see \cite{BC12}, \cite{AL22}, \cite{DL23a} and also \cite{FY20}, \cite{QQ20}). Additionally, by a standard quasiconformal surgery, one can construct rational maps having a Herman ring and a Julia set with positive area.

In this paper, we prove that there exist rational maps having a Julia set of positive area which do not belong to any one of the above cases.

\begin{thmx}\label{thm:main-2}
There exist cubic rational maps having a nowhere dense Julia set of positive area. These maps have no irrationally indifferent periodic points, no Herman rings, and are not renormalizable.
\end{thmx}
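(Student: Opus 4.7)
The plan is to show that the cubic rational maps $f$ produced by the proof of Theorem~\ref{thm:main-1}, with a slightly refined choice of parameters, satisfy all the additional properties required by Theorem~\ref{thm:main-2}. The construction underlying Theorem~\ref{thm:main-1} applies Shishikura's Siegel-to-Herman quasiconformal surgery to a sequence of quadratic polynomials $P_{\alpha_n}$ with smooth Siegel disks in the sense of Avila--Buff--Ch\'eritat, producing cubic rational maps $f_n$ with invariant Herman rings, and then pinches these Herman rings to a smooth invariant Jordan curve $\gamma$ in the limit. Continuity through the pinching uses the Buff--Ch\'eritat estimates on the loss of area of quadratic filled-in Julia sets, which is precisely the toolkit that will also control the area of $J(f)$.

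The key step is to choose the rotation numbers $(\alpha_n)$ so as to satisfy simultaneously the Avila--Buff--Ch\'eritat conditions forcing smoothness of the Siegel disk boundaries and the Buff--Ch\'eritat Liouville-type conditions producing quadratic Julia sets of positive area. Both are restrictions on the continued fraction expansions and are mutually compatible, since the ABC condition only demands sufficiently fast (Brjuno-type) approximation. Shishikura's surgery is a quasiconformal modification that is conformal outside a Siegel disk, hence it preserves positive planar measure; applied to $P_{\alpha_n}$ it produces a cubic rational map $f_n$ whose Julia set has positive area. The pinching limit of Theorem~\ref{thm:main-1} is exactly where the Buff--Ch\'eritat area estimates enter, and a quantitative form of them gives a uniform lower bound on $\area(J(f_n))$ as $n\to\infty$. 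Hence the limit map $f$ has a Julia set of positive area; since the superattracting basin created by the surgery persists in the limit, $J(f)$ is nowhere dense.

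Finally, the three negative properties follow from the structure of the construction. The pinching destroys the Herman ring of $f_n$ and absorbs the indifferent dynamics into $J(f)$, so $f$ has no Herman ring and no irrationally indifferent periodic orbit (the smooth invariant Jordan curve $\gamma\subset J(f)$ is the degenerate Herman ring, which is not a Fatou component and contains no periodic point by irrationality of the induced rotation). Non-renormalizability follows from critical-point bookkeeping: all four critical points of $f$ are captured by the superattracting cycles created by the surgery (the degenerate Herman ring contains no critical point, as noted just after Theorem~\ref{thm:main-1}), so no critical orbit of $f$ accumulates in $J(f)$; hence any polynomial-like restriction of some $f^k$ must have trivial degree, ruling out a nontrivial renormalization. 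The main obstacle is keeping the quantitative area estimates uniform through both limits---the smoothing of the Siegel disk at the quadratic level and the pinching of the Herman ring at the cubic level---so that positive area survives in the limit while the Herman ring modulus collapses to zero.
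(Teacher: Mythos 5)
Your high-level plan — refine the parameter choices in the proof of Theorem \ref{thm:main-1} so that the limit map also has positive-area Julia set, then rule out the three phenomena — matches the paper's strategy, but several of your intermediate claims are actually false, and the ways you propose to close the argument would not work.

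First, the source of positive area. You assert that the surgery ``applied to $P_{\alpha_n}$ produces a cubic rational map $f_n$ whose Julia set has positive area'' and that the Buff--Ch\'eritat estimates ``give a uniform lower bound on $\area(J(f_n))$.'' This is wrong: each $\alpha_n$ in the construction is of \emph{bounded type}, so by Corollary \ref{cor:zero-area} (via Petersen--Zakeri and Lemma \ref{lem:zero-area}) every $J(Q_{\alpha_n})$ has area \emph{zero}. There is no positive lower bound along the sequence. What the paper actually bounds below is $\area(L_{\alpha(n)}(r))$, which for bounded-type $\alpha(n)$ consists mostly of an annular shell of the Siegel disk $\Delta_{\alpha(n)}\setminus\Delta_{\alpha(n)}(r)$ and \emph{not} of the Julia set. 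The positive area materializes only at the limit parameter $\alpha'$, where $r_{\alpha'}=r$ collapses that shell onto $J(P_{\alpha'})$; one then uses $\limsup_n L_{\alpha(n)}(r)\subset L_{\alpha'}(r)=J(P_{\alpha'})$ together with the telescoping lower bound $\area(L_{\alpha(n)}(r))\geqslant(1-\varepsilon_n')\area(L_{\alpha(n-1)}(r))$. Finally, the area is transported from $J(P_{\alpha'})$ to $J(Q_{a',u'})$ by a separate quasiconformal conjugacy on each side of the degenerate Herman ring $\gamma'$ (a straightening of a quasiregular map built from $Q_{a',u'}$ on $(\gamma')^{\Ext}$ and a rotation on $(\gamma')^{\Int}$); your phrase ``conformal outside a Siegel disk, hence preserves positive measure'' does not capture this and would not by itself give positivity, since the preimage under the surgery map of a positive-area set can a priori have measure zero if the map's Jacobian degenerates.

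Second, your non-renormalizability argument has a fatal error. You claim ``all four critical points of $f$ are captured by the superattracting cycles created by the surgery ... so no critical orbit of $f$ accumulates in $J(f)$.'' Only two of the four critical points, $0$ and $\infty$, are superattracting. The other two, $1$ and $c$, have forward orbits contained in $J_\alpha^\infty$ and $J_\alpha^0$ (Lemma \ref{lem:zero-area}(a)), that is, they lie in the Julia set and indeed accumulate throughout it — this is precisely what makes the Julia set interesting enough to carry positive area. So the premise is false. The paper's actual non-renormalizability argument goes through the quasiconformal conjugacy of Theorem \ref{thm:Julia-positive}(c): if a small filled-in Julia set $K_0$ existed, then $\gamma'\cap K_0=\emptyset$ or $\gamma'\subset K_0$, the second being impossible for a polynomial-like restriction, and the first contradicting the fact that the closure of the critical orbit of $c_0$ meets $\overline{(\gamma')^{\Int}}$ and its preimages while $K_0$ cannot. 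Similarly, for absence of Cremer points and rotation domains the paper invokes Milnor's theorem that the boundary of a rotation domain (and a Cremer point) must lie in the post-critical set, and shows the post-critical set avoids the relevant preimage set $\mathcal{V}$; your one-line assertion that ``pinching absorbs the indifferent dynamics into $J(f)$'' does not exclude an irrationally indifferent periodic point elsewhere on the sphere.

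In short, you have the right skeleton — reuse the ABC-type inductive sequence, combine it with Buff--Ch\'eritat's loss-of-area control, and conclude — but the mechanism by which positive area appears (and where it lives along the approximating sequence) is misidentified, and the three negative properties are not established by your arguments.
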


In fact, we will prove the existence of a rational map such that Theorems \ref{thm:main-1} and \ref{thm:main-2} hold at the same time (see \S\ref{sec:proof-ThmA}).
Intuitively, the rational maps therein can be seen to be obtained by ``pasting" two quadratic Siegel polynomials along their Siegel disk boundaries, which can be also seen as some form of ``tuning" \cite[Chap.\,V]{DH85b} and ``simultaneous  uniformization" \cite{McM85}.

\medskip
Recently, another method of constructing non-trivial degenerate Herman rings (i.e., not by a quasiconformal deformation of Blaschke products) was given by Lim, based on the study of \textit{a priori} bounds of bounded type Herman rings \cite{Lim23a}. The degenerate Herman rings constructed there are quasi-circles passing through critical points (hence are not smooth) and the corresponding Julia sets likely to have area zero.

\subsection{Idea of the proofs}

The proof of Theorem \ref{thm:main-1} is based on two main ingredients: a perturbation lemma in Avila-Buff-Ch\'{e}ritat's construction of smooth Siegel disks
(see Lemma \ref{lem:ABC04}) and a standard quasiconformal surgery which turns Siegel disks to Herman rings. We ``paste" two quadratic Siegel polynomials $P_\alpha$ and $P_{-\alpha}$ together such that the resulting cubic rational map $Q_\alpha$ has a fixed Herman ring and is not a Blaschke product, by destroying the symmetry, where $P_{\alpha}(z):=e^{2\pi\ii\alpha}z+z^2$ with $\alpha\in\R\setminus\Q$. By choosing a suitable sequence $(\alpha_n)_{n\geqslant 0}$ of irrational numbers such that the conformal radius of the Siegel disk of $P_{\alpha_n}$ decreases strictly to a positive number and that $\alpha_n\to\alpha'$ as $n\to\infty$, where $\alpha'\in\R\setminus\Q$, we prove that the limit cubic rational map $Q_{\alpha'}$ cannot keep any spherical circle invariant and has a smooth degenerate Herman ring. Moreover, the construction leads to $\alpha'$ being a Brjuno number.

There are two crucial points in the proof: the first is the continuity of the surgery and the second is the convergence of smooth Jordan curves in the Herman rings as we take limit (the conformal modulus of the Herman ring tends to zero).
To prove the continuity of the surgery, we need to prove the convergence of a sequence of Beltrami coefficients. To this end, we use a result of Buff and Ch\'{e}ritat \cite{BC12} to control the loss of area of quadratic filled-in Julia sets (see Lemma \ref{lem:control-loss}). In particular, all the irrational numbers need to be chosen as sufficiently high type.
For the second crucial point, we consider the convergence of smooth curves in the Fr\'{e}chet space $C^\infty(\R/\Z,\C)$, which consists of all $C^\infty$-function from $\R/\Z$ to $\C$. This is inspired by \cite{ABC04}.

\medskip
The smooth degenerate Herman ring in Theorem \ref{thm:main-1} divides the Riemann sphere into two Jordan disks in which the limit cubic rational map $Q_{\alpha'}$ is proved to be quasiconformally conjugate to the quadratic Siegel polynomial $P_{\alpha'}$ and $P_{-\alpha'}$ respectively.
By choosing the sequence $(\alpha_n)_{n\geqslant 0}$ of irrational numbers suitably, we can control the area of the Julia set of $P_{\alpha'}$ without affecting Theorem \ref{thm:main-1}. In particular, there exists an irrational number $\alpha'$ and a cubic rational map $Q_{\alpha'}$ such that Theorems \ref{thm:main-1} and \ref{thm:main-2} hold at the same time.

\medskip
In the last section we supplement some results and list some problems about (degenerate) Herman rings.

\medskip
\textit{Remark on the proofs}. One may wonder whether it is possible to prove \ref{thm:main-1} directly by quasiconformal deformation, but not using Siegel-to-Herman surgery, just like the argument in \cite{Avi03}. However, to apply a similar argument, one needs to find a slice of the space of rational maps such that the conformal moduli of the non-symmetric Herman rings can be controlled well under the perturbation of rotation number, which turns out to be difficult. In fact, this is a major difference from \cite{Avi03}, where the space of cubic Blaschke products is considered and one has such a slice automatically.
But we would like to mention that for Theorem \ref{thm:main-2}, the rational maps can be chosen as cubic Blaschke products (see \S\ref{subsec:Blaschke}).

\medskip
\noindent\textbf{Acknowledgements.}
The author is very grateful to two anonymous reviewers for their very insightful and detailed comments, suggestions and corrections.
He would also like to thank Alexandre Eremenko and Lasse Rempe for valuable comments, and Hongyu Qu for helpful discussions.
This work was supported by NSFC (Grant Nos.\ 12222107 and 12071210).

\section{Siegel-to-Herman surgery and rigidity}\label{sec:Siegel-to-Herman}

In this section, we perform the quasiconformal surgery which pastes two Siegel disks of quadratic polynomials to obtain a Herman ring of a cubic rational map. Under Petersen-Zakeri's condition we prove that the surgery has some rigidity.

\subsection{Siegel-to-Herman surgery}\label{subsec:S-to-H}

In this paper, for $r>0$ we denote $\D_r:=\{z\in\C: |z|<r\}$ and $\T_r:=\partial\D_r$. In particular, $\D:=\D_1$ and $\T:=\T_1$. For any $0<r_1<r_2<+\infty$, we denote $\A_{r_1,r_2}:=\{z\in\C: r_1<|z|<r_2\}$.

Let $[a_0;a_1,a_2,\cdots,a_n,\cdots]$ be the continued fraction expansion of an irrational number $\alpha\in \R\setminus\Q$, where $a_n\geqslant 1$ for all $n\geqslant 1$. The rational numbers \begin{equation}\label{equ:q-n}
a_0+p_n/q_n := a_0+[0; a_1,\cdots , a_n], \quad n\geqslant 1,
\end{equation}
are the convergents of $\alpha$, where $p_n$ and $q_n$ are coprime positive integers.
Suppose $\alpha\in \R\setminus\Q$ is a \textit{Brjuno number}, i.e.,
\begin{equation}
\sum_{n\geqslant 1}\frac{\log q_{n+1}}{q_n}<+\infty.
\end{equation}
According to \cite{Sie42} and \cite{Brj71}, every holomorphic germ $f(z)=e^{2\pi\ii\alpha}z+\MO(z^2)$ is locally linearizable at $0$, i.e., $f$ is holomorphically conjugate to the rigid rotation $R_\alpha(\zeta):=e^{2\pi\ii\alpha}\zeta$ in a neighborhood of $0$. The largest $f$-invariant domain containing $0$ in which $f$ is conjugate to $R_\alpha$ is called the \textit{Siegel disk} of $f$ centered at $0$.
In particular, if $\alpha\in\R\setminus\Q$ is Brjuno, then the quadratic polynomial
\begin{equation}
P_{\alpha}(z):=e^{2\pi \ii\alpha}z+z^2
\end{equation}
has a \textit{Siegel disk} $\Delta_{\alpha}$ centered at the origin.
There exist a unique $r_{\alpha}>0$ and a unique conformal map $\phi_{\alpha}:\mathbb{D}_{r_{\alpha}}\to\Delta_{\alpha}$ with $\phi_\alpha(0)=0$ and $\phi_\alpha'(0)=1$, such that
\begin{equation}\label{equ:linearization}
\phi_{\alpha}\circ R_\alpha(\zeta)=P_{\alpha}\circ\phi_{\alpha}(\zeta) \text{\quad for all } \zeta\in\mathbb{D}_{r_{\alpha}}.
\end{equation}
The number $r_{\alpha}>0$ is called the \textit{conformal radius} of the Siegel disk $\Delta_{\alpha}$.
Yoccoz proved that if $\alpha$ is not a Brjuno number, then $P_\alpha$ cannot be locally linearizable at $0$ \cite{Yoc95}.
If $\alpha\in\R\setminus\Q$ is not a Brjuno number, we denote $\Delta_\alpha=\emptyset$ and $r_\alpha=0$.

\medskip
In this subsection, we perform the quasiconformal surgery which turns Siegel disks to Herman rings. The original idea can be found in \cite{Shi87} (see also \cite[\S 7.3]{BF14a} and \cite{BFGH05}).
Let $\alpha$ be a Brjuno number and $0<r<r_\alpha$. Then
\begin{equation}
\gamma_{\alpha,r}:=\phi_{\alpha}(\T_r)
\end{equation}
is a $P_\alpha$-invariant analytic Jordan curve in $\Delta_\alpha$. In the following,
\begin{itemize}
\item For a Jordan curve $\gamma$ (resp. an annulus $A$) in $\C$, we use $\gamma^{\Ext}$ (resp. $A^{\Ext}$) to denote the connected component of $\EC\setminus\gamma$  (resp. $\widehat{\mathbb{C}} \setminus A$) containing $\infty$ and use $\gamma^{\Int}$  (resp. $A^{\Int}$) to denote the other component of $\widehat{\mathbb{C}} \setminus \gamma$ (resp. $\widehat{\mathbb{C}} \setminus A$);
\item For two Jordan curves $\gamma_1$ and $\gamma_2$ in $\C$ satisfying $\gamma_1\subset\gamma_2^\Int$, we use $A(\gamma_1,\gamma_2)\subset\C$ to denote the open annulus bounded by $\gamma_1$ and $\gamma_2$; and
\item For an annulus $A$ in $\EC$, we use $\Mod(A)$ to denote its \textit{conformal modulus}.
\end{itemize}

\begin{lem}\label{lem:annulus}
For any Brjuno number $\alpha$ and $0<r<r'<r_\alpha$, we have
\begin{equation}
\eta_r(\gamma_{-\alpha,r'}^\Ext)\subset\gamma_{\alpha,r/2}^\Int, \text{\quad where }\eta_r(z):=\tfrac{r^2}{32z}.
\end{equation}
\end{lem}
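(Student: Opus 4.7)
The plan is to reduce the statement to two applications of Koebe's one-quarter theorem together with a direct modulus estimate for $\eta_r$. The key observation is that although the linearizing maps $\phi_{\pm\alpha}$ are not explicit, their normalization $\phi_{\pm\alpha}(0)=0$, $\phi_{\pm\alpha}'(0)=1$ combined with univalence forces concentric round disks of controlled radius to sit inside $\gamma_{\alpha,r/2}^\Int$ and $\gamma_{-\alpha,r'}^\Int$, and this is precisely enough to compare the two curves via the inversion-type map $\eta_r$. A preliminary remark is that $r_{-\alpha}=r_\alpha$: the anti-holomorphic involution $z\mapsto\bar z$ conjugates $P_\alpha$ to $P_{-\alpha}$ and fixes $0$ with derivative of modulus one, so $\phi_{-\alpha}$ is indeed defined on $\D_{r'}$ whenever $r'<r_\alpha$.

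The two Koebe applications are then immediate. Rescaling $\phi_\alpha$ via $w\mapsto\phi_\alpha(rw/2)/(r/2)$ produces a univalent map $\D\to\C$ with derivative $1$ at the origin, so Koebe's $1/4$ theorem yields
$$\gamma_{\alpha,r/2}^\Int \;=\; \phi_\alpha(\D_{r/2}) \;\supset\; \D_{r/8}.$$
Running the same argument for $\phi_{-\alpha}$ on $\D_{r'}$ gives $\gamma_{-\alpha,r'}^\Int\supset\D_{r'/4}$. Since $\gamma_{-\alpha,r'}^\Ext$ is open and disjoint from $\gamma_{-\alpha,r'}^\Int$, the finite portion of the exterior component satisfies
$$\gamma_{-\alpha,r'}^\Ext\cap\C \;\subset\; \{z\in\C:|z|>r'/4\}.$$

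The rest is arithmetic. For any finite $z\in\gamma_{-\alpha,r'}^\Ext$,
$$|\eta_r(z)|\;=\;\frac{r^2}{32\,|z|}\;<\;\frac{r^2}{32\cdot(r'/4)}\;=\;\frac{r^2}{8r'}\;<\;\frac{r}{8},$$
where the last inequality uses $r<r'$; and $\eta_r(\infty)=0\in\D_{r/8}$. Combining with the first Koebe estimate,
$$\eta_r\bigl(\gamma_{-\alpha,r'}^\Ext\bigr)\;\subset\;\D_{r/8}\;\subset\;\gamma_{\alpha,r/2}^\Int,$$
which is the assertion. There is no real obstacle here; the statement is essentially an exercise in bookkeeping, and the factor $32$ in the definition of $\eta_r$ is exactly calibrated so that the two Koebe $1/4$-losses (one for $\phi_\alpha$ on $\D_{r/2}$, one for $\phi_{-\alpha}$ on $\D_{r'}$), together with the margin supplied by $r<r'$, fit together without slack.
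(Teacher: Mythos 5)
Your proof is correct and follows essentially the same route as the paper's: two applications of Koebe's one-quarter theorem to the normalized linearizers $\phi_{\pm\alpha}$, together with the elementary modulus estimate for $\eta_r$. The only cosmetic difference is that you apply Koebe to $\phi_{-\alpha}$ on $\D_{r'}$ and obtain strictness from $r<r'$, whereas the paper first notes $\gamma_{-\alpha,r'}^\Ext\subset\gamma_{-\alpha,r}^\Ext$ and then applies Koebe on $\D_{r}$, arriving at the same bound $\D_{r/8}$.
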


\begin{proof}
Note that $\overline{P_{-\alpha}(\overline{z})}=P_\alpha(z)$. By \eqref{equ:linearization} we have $\overline{\phi_{\alpha}\circ \overline{R_{-\alpha}(\zeta)}}=P_{-\alpha}\circ\overline{\phi_{\alpha}(\overline{\zeta})}$ for all $\zeta\in\mathbb{D}_{r_{\alpha}}$.
Since $\phi_{\alpha}:\mathbb{D}_{r_{\alpha}}\to\Delta_{\alpha}$ is the conformal map with $\phi_\alpha(0)=0$ and $\phi_\alpha'(0)=1$, we have
\begin{equation}\label{equ:phi-minus-alpha}
\phi_{-\alpha}(\zeta)=\overline{\phi_\alpha(\overline{\zeta})}  \text{\quad and\quad} r_{-\alpha}=r_{\alpha}.
\end{equation}
By the Koebe distortion theorem (see \cite[Theorem 1.6, p.\,21]{Pom75}), we have
\begin{equation}
\gamma_{\alpha,r/2}^\Int=\phi_\alpha(\D_{r/2})\supset \D_{r/8} \text{\quad and\quad}
\gamma_{-\alpha,r}^\Ext=\EC\setminus\overline{\phi_{-\alpha}(\D_r)}\subset \EC\setminus\overline{\D}_{r/4}.
\end{equation}
This implies that $\eta_r(\gamma_{-\alpha,r'}^\Ext)\subset\eta_r(\gamma_{-\alpha,r}^\Ext)\subset\gamma_{\alpha,r/2}^\Int$.
\end{proof}

By Lemma \ref{lem:annulus}, $\gamma_{\alpha,r}^\Int\setminus \overline{\eta_r(\gamma_{-\alpha,r'}^\Ext)}$ is an annulus whose boundary consists of two analytic Jordan curves $\gamma_{\alpha,r}$ and $\eta_r(\gamma_{-\alpha,r'})$.
Note that $\Mod\big(A(\gamma_{\alpha,r},\partial\Delta_\alpha)\big)=\frac{1}{2\pi}\log\frac{r_\alpha}{r}$ and $\Mod\big(A(\gamma_{-\alpha,r'},\partial\Delta_{-\alpha})\big)=\frac{1}{2\pi}\log\frac{r_\alpha}{r'}$.
For any $\theta_1,\theta_2\in\R$, the following two conformal maps
\begin{equation}\label{equ:psi-alpha-pm}
\begin{split}
\psi_{\alpha,+}(\zeta):=\phi_\alpha\big(r e^{\ii\theta_1}\zeta\big): &~ \A_{1,r_\alpha/r} \to A(\gamma_{\alpha,r},\partial\Delta_\alpha) \text{\quad and\quad} \\
\psi_{\alpha,-}(\zeta):= \eta_r\circ\phi_{-\alpha}\Big(\tfrac{ r}{e^{\ii\theta_2}\zeta}\Big): &~ \A_{r/r_\alpha,r/r'} \to A(\eta_r(\partial\Delta_{-\alpha}),\eta_r(\gamma_{-\alpha,r'}))
\end{split}
\end{equation}
satisfy
\begin{equation}\label{equ:psi-pm}
\begin{split}
\psi_{\alpha,+}^{-1}\circ P_{\alpha}\circ\psi_{\alpha,+}(\zeta)=&~R_\alpha(\zeta) \text{\quad for all } \zeta\in\A_{1,r_\alpha/r} \text{\quad and\quad} \\
\psi_{\alpha,-}^{-1}\circ (\eta_r\circ P_{-\alpha}\circ\eta_r^{-1})\circ\psi_{\alpha,-}(\zeta)=&~R_\alpha(\zeta) \text{\quad for all } \zeta\in\A_{r/r_\alpha,r/r'}.
\end{split}
\end{equation}
Note that any conformal isomorphism which maps the annulus $\A_{r_1,r_2}$ to itself can only be a rigid rotation $\zeta\mapsto e^{\ii\theta}\zeta$ or have the form $\zeta\mapsto r_1 r_2 e^{\ii\theta}/\zeta$ by Schwarz reflection, where $\theta\in\R$.
Hence the conformal maps $\psi_{\alpha,+}:\A_{1,r_\alpha/r} \to A(\gamma_{\alpha,r},\partial\Delta_\alpha)$ and
$\psi_{\alpha,-}:\A_{r/r_\alpha,r/r'} \to A(\eta_r(\partial\Delta_{-\alpha}),\eta_r(\gamma_{-\alpha,r'}))$ satisfying \eqref{equ:psi-pm} must have the form \eqref{equ:psi-alpha-pm}.
Note that we shall perform Shishikura's Siegel-to-Herman surgery and the parameters $\theta_1$ and $\theta_2$ will be used to measure the relative position of the two critical points on the boundary of the Herman ring (see \S\ref{subsec:rigidity}).

By Schwarz reflection, $\psi_{\alpha,+}$ and $\psi_{\alpha,-}$ can be analytically extended to a neighborhood of the boundary components $\T$ and $\T_{r/r'}$ respectively. By \cite[Lemma 2.22 and Remark 2.23]{BF14a}, there exists an interpolation
\begin{equation}\label{equ:psi-0}
\psi_{\alpha,0}:\overline{\A}_{r/r',1} \to \overline{A(\eta_r(\gamma_{-\alpha,r'}),\gamma_{\alpha,r})}
\end{equation}
such that $\Psi_\alpha: \A_{r/r_\alpha,r_\alpha/r} \to A(\eta_r(\partial\Delta_{-\alpha}),\partial\Delta_\alpha)$ is a $C^\infty$-diffeomorphism, where
\begin{equation}\label{equ:Psi}
\Psi_\alpha(\zeta):=
\left\{
\begin{array}{ll}
\psi_{\alpha,+}(\zeta)  &~~~~~~~\text{if}~\zeta\in \A_{1,r_\alpha/r}, \\
\psi_{\alpha,-}(\zeta) &~~~~~~\text{if}~\zeta\in  \A_{r/r_\alpha,r/r'}, \\
\psi_{\alpha,0}(\zeta) &~~~~~~\text{if}~\zeta\in \overline{\A}_{r/r',1}.
\end{array}
\right.
\end{equation}
Note that the interpolation $\psi_{\alpha,0}$ is not unique. We fix a choice here.
See Figure \ref{Fig-surgery}.

\begin{figure}[!htpb]
  \setlength{\unitlength}{1mm}
  \centering
  \includegraphics[width=0.9\textwidth]{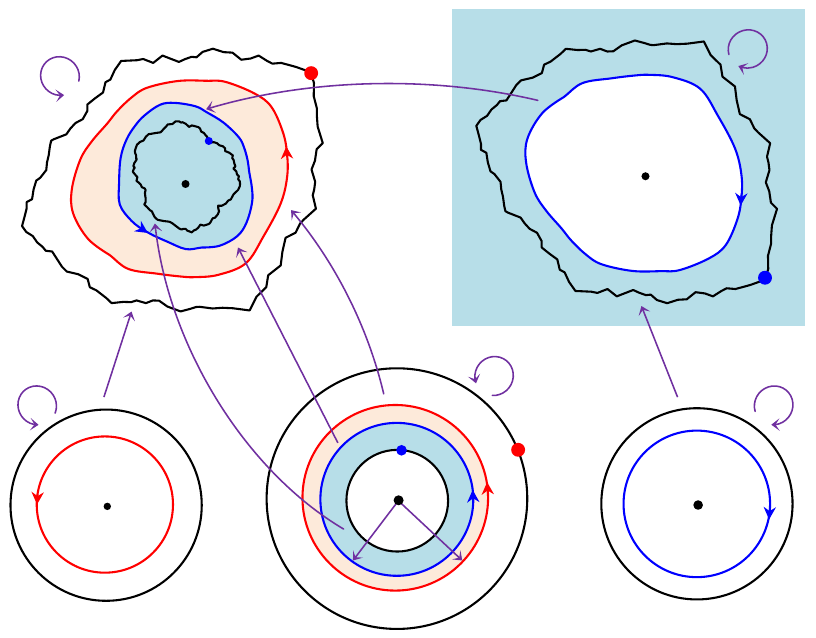}
  \put(-111,92.5){$\Delta_\alpha$}
  \put(-66,89){$\eta_r$}
  \put(-125,92){$P_\alpha$}
  \put(-97,70){$0$}
  \put(-84.5,81){$\gamma_{\alpha,r}$}
  \put(-77.5,90){$\omega_\alpha$}
  \put(-105,75){\small{$\eta_r(\omega_{-\alpha})$}}
  \put(-125,41){$R_{\alpha}$}
  \put(-109.5,19.5){$0$}
  \put(-116,13){$\D_{r}$}
  \put(-101.5,5){$\D_{r_\alpha}$}
  \put(-115,44){$\phi_{\alpha}$}
  \put(-49,5){$\D_{r_\alpha/r}$}
  \put(-69,24.5){$\D_{r/r_\alpha}$}
  \put(-71,19){$\tfrac{r}{r'}$}
  \put(-62,18.8){$1$}
  \put(-66.2,17){$0$}  
  \put(-95,41){$\psi_{\alpha,-}$}
  \put(-82,47){$\psi_{\alpha,0}$}
  \put(-73,55){$\psi_{\alpha,+}$}
  \put(-46.6,41){$R_\alpha$}
  \put(-8.5,95){$P_{-\alpha}$}
  \put(-50,57){$\Delta_{-\alpha}$}
  \put(-9.5,52){$\omega_{-\alpha}$}
  \put(-25,71){$0$}
  \put(-30,85){$\gamma_{-\alpha,r'}$}
  \put(-9.5,41){$R_{-\alpha}$}
  \put(-16.5,19.5){$0$}
  \put(-24,13){$\D_{r'}$}
  \put(-9,5){$\D_{r_{\alpha}}$}
  \put(-22.8,43.5){$\phi_{-\alpha}$}
  \caption{A sketch of the Siegel-to-Herman surgery construction, where $0<r<r'<r_\alpha$. The critical points $\omega_\alpha$ and $\eta_r(\omega_{-\alpha})$ are marked and their relative position will be determined in \S\ref{subsec:rigidity} under the assumption that $\alpha$ is of Petersen-Zakeri type.}
  \label{Fig-surgery}
\end{figure}

\medskip
Define
\begin{equation}\label{equ:F-quasiregular}
F_\alpha(z):=
\left\{
\begin{array}{ll}
P_\alpha(z)  &~~~~~~~\text{if}~z\in \gamma_{\alpha,r}^\Ext, \\
\eta_r\circ P_{-\alpha}\circ\eta_r^{-1}(z) &~~~~~~\text{if}~z\in \eta_r(\gamma_{-\alpha,r'}^\Ext), \\
\psi_{\alpha,0}\circ R_\alpha\circ\psi_{\alpha,0}^{-1}(z) &~~~~~~\text{if}~z\in \overline{A(\eta_r(\gamma_{-\alpha,r'}),\gamma_{\alpha,r})}.
\end{array}
\right.
\end{equation}
Let $\omega_\alpha:=-e^{2\pi\ii\alpha}/2$ (resp. $\omega_{-\alpha}$) be the critical point of $P_\alpha$ (resp. $P_{-\alpha}$).
We have the following result.

\begin{lem}\label{lem:surgery-S-to-H}
For any Brjuno number $\alpha$, there exists a quasiconformal map $\Phi_\alpha:\EC\to\EC$ satisfying $\Phi_\alpha(0)=0$, $\Phi_\alpha(\infty)=\infty$ and $\Phi_\alpha(\omega_\alpha)=1$ such that
\begin{equation}\label{equ:Q-alpha}
Q_\alpha(z):=\Phi_\alpha\circ F_\alpha\circ\Phi_\alpha^{-1}(z)=u z^2\frac{z-a}{1-\tfrac{2a-3}{a-2} z},
\end{equation}
where $u\in\C\setminus\{0\}$ and $a\in\C\setminus\big\{0,1,\frac{3}{2},2,3\big\}$. The map $Q_\alpha$ has four critical points $\big\{0,1,\infty,c=\frac{a(a-2)}{2a-3}\big\}$ and a fixed Herman ring $A_\alpha$ with rotation number $\alpha$ and modulus $\frac{1}{\pi}\log\frac{r_\alpha}{r}$.
\end{lem}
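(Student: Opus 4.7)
The plan is to run the standard Shishikura-style quasiconformal surgery on the quasi-regular map $F_\alpha$ defined in \eqref{equ:F-quasiregular}. I begin by verifying that $F_\alpha:\EC\to\EC$ is a quasi-regular branched cover of topological degree three with critical set exactly $\{0,\infty,\omega_\alpha,\eta_r(\omega_{-\alpha})\}$, each point of local degree two. The two outer pieces are holomorphic and contribute: $P_\alpha$ on $\gamma_{\alpha,r}^{\Ext}$ has a critical fixed point of local degree two at $\infty$ (since $P_\alpha(z)\sim z^2$) together with the finite critical point $\omega_\alpha$; symmetrically, using that $\eta_r$ is a M\"obius involution and $P_{-\alpha}(w)\sim w^2$ near $\infty$, a short computation gives $F_\alpha(z)=\eta_r\circ P_{-\alpha}\circ\eta_r^{-1}(z)\sim 32z^2/r^2$ near $0$, so $0$ is a critical fixed point of local degree two and $\eta_r(\omega_{-\alpha})$ is the remaining critical point. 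On the middle annulus $F_\alpha=\psi_{\alpha,0}\circ R_\alpha\circ\psi_{\alpha,0}^{-1}$ is a $C^\infty$ diffeomorphism, hence has no critical points. Riemann--Hurwitz $2=2d-4$ then forces $d=3$.

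Next, I would construct an $F_\alpha$-invariant Beltrami coefficient $\mu$ on $\EC$ of norm less than one. Set $\mu\equiv 0$ on the two outer pieces (where $F_\alpha$ is already holomorphic) and, on the closed middle annulus, take $\mu$ to be the Beltrami coefficient of $\psi_{\alpha,0}$, so that $\psi_{\alpha,0}^{-1}$ straightens $\mu$ to the standard structure. Since $\psi_{\alpha,0}$ is a $C^\infty$ diffeomorphism of the compact annulus $\overline{\A}_{r/r',1}$, one has $\|\mu\|_\infty<1$; and $F_\alpha^{\ast}\mu=\mu$ almost everywhere, because $F_\alpha$ is holomorphic on the two outer regions and is the $\psi_{\alpha,0}$-conjugate of the rigid rotation $R_\alpha$ on the middle annulus. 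The Measurable Riemann Mapping Theorem then produces a quasiconformal $\Phi_\alpha:\EC\to\EC$ solving $\overline{\partial}\Phi_\alpha=\mu\,\partial\Phi_\alpha$, made unique by the three normalisations $\Phi_\alpha(0)=0$, $\Phi_\alpha(\infty)=\infty$, $\Phi_\alpha(\omega_\alpha)=1$. The conjugate $Q_\alpha:=\Phi_\alpha\circ F_\alpha\circ\Phi_\alpha^{-1}$ is quasi-regular and preserves the standard complex structure, hence is holomorphic; being a topological degree-three self-map of $\EC$, it is a cubic rational map. The composition $\Phi_\alpha\circ\Psi_\alpha$ conformally conjugates $R_\alpha$ on $\A_{r/r_\alpha,r_\alpha/r}$ to $Q_\alpha$ on the topological annulus $A_\alpha:=\Phi_\alpha\bigl(A(\eta_r(\partial\Delta_{-\alpha}),\partial\Delta_\alpha)\bigr)$, exhibiting $A_\alpha$ as a fixed Herman ring with rotation number $\alpha$ and modulus $\frac{1}{\pi}\log\frac{r_\alpha}{r}$.

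Finally I would pin down the algebraic form. The points $\Phi_\alpha(0)=0$ and $\Phi_\alpha(\infty)=\infty$ are critical fixed points of $Q_\alpha$ of local degree two, while $\Phi_\alpha(\omega_\alpha)=1$ and $\Phi_\alpha(\eta_r(\omega_{-\alpha}))=:c$ are its remaining two critical points. The first two conditions force $Q_\alpha(z)=u z^2(z-a)/(1-bz)$ with $u,a,b\neq 0$ (the condition $b\neq 0$ is needed to preserve both the cubic degree and the fact that $\infty$ has local degree two). A direct computation gives $Q_\alpha'(z)=uz\bigl(-2bz^2+(ab+3)z-2a\bigr)/(1-bz)^2$; imposing $Q_\alpha'(1)=0$ yields $b=(2a-3)/(a-2)$, and Vi\`ete's formula then identifies the fourth critical point as $c=a(a-2)/(2a-3)$, reproducing the announced expression. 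The excluded values $a\in\{0,1,\tfrac{3}{2},2,3\}$ correspond exactly to the degenerate situations where the map drops degree ($a=1$ or $a=\tfrac{3}{2}$, forcing $b=1$ or $b=0$), the formula breaks down ($a=2$), or two of the critical points collide ($a=0$ making $0$ triple, $a=3$ making $c=1$). The only non-routine ingredient in the entire program is the existence of the $C^\infty$ interpolation $\psi_{\alpha,0}$ gluing $\psi_{\alpha,+}$ and $\psi_{\alpha,-}$ across $\overline{\A}_{r/r',1}$, which is granted by \cite[Lemma 2.22 and Remark 2.23]{BF14a}; everything else is bookkeeping around MRMT and elementary algebra.
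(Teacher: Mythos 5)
Your Beltrami coefficient $\mu$ is \emph{not} $F_\alpha$-invariant as constructed, and this is a genuine gap, not a bookkeeping slip. You set $\mu\equiv 0$ on the two outer pieces and equal to the straightening Beltrami of $\psi_{\alpha,0}$ on the middle annulus $W_\alpha:=\overline{A(\eta_r(\gamma_{-\alpha,r'}),\gamma_{\alpha,r})}$, and then claim $F_\alpha^{*}\mu=\mu$ a.e.\ ``because $F_\alpha$ is holomorphic on the two outer regions.'' But holomorphicity of $F_\alpha$ at $z$ only gives $(F_\alpha^{*}\mu)(z)=\mu(z)$ when $\mu\bigl(F_\alpha(z)\bigr)=\mu(z)$; what it gives in general is that $(F_\alpha^{*}\mu)(z)$ is the holomorphic pullback of $\mu$ at $F_\alpha(z)$. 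The middle annulus $W_\alpha$ sits inside $\gamma_{\alpha,r}^{\Int}$, and since $P_\alpha$ has degree two and the curve $\gamma_{\alpha,r}$ is $P_\alpha$-invariant, $P_\alpha^{-1}(W_\alpha)$ has a second component lying in $\gamma_{\alpha,r}^{\Ext}$ (the ``other'' preimage of the invariant disk $\gamma_{\alpha,r}^{\Int}$). At points $z$ in that second preimage component, $F_\alpha(z)=P_\alpha(z)\in W_\alpha$ where $\mu\neq 0$, so $(F_\alpha^{*}\mu)(z)\neq 0=\mu(z)$. The same phenomenon occurs for the other outer piece via $\eta_r\circ P_{-\alpha}\circ\eta_r^{-1}$.

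The fix is exactly the step the paper carries out and you omitted: first define the structure $\sigma_\alpha:=(\Psi_\alpha^{-1})^{*}\sigma_0$ on the entire annulus $A_\alpha'=A(\eta_r(\partial\Delta_{-\alpha}),\partial\Delta_\alpha)$ (which $F_\alpha$ maps to itself, so invariance holds there), and then \emph{propagate it through the grand orbit} by setting $\sigma_\alpha:=(F_\alpha^{\circ n})^{*}\sigma_\alpha$ on $F_\alpha^{-n}(A_\alpha')\setminus F_\alpha^{-(n-1)}(A_\alpha')$ for all $n\geqslant 1$, and $\sigma_\alpha:=\sigma_0$ on the complement. The dilatation stays uniformly bounded precisely because $F_\alpha$ is holomorphic outside $W_\alpha$: each orbit hits the non-conformal part at most once, so no dilatation accumulates. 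Once this pull-back construction is in place, the remainder of your argument --- the invocation of the measurable Riemann mapping theorem with the three-point normalisation, the observation that $\Phi_\alpha\circ\Psi_\alpha$ is a conformal conjugacy from $R_\alpha$ to $Q_\alpha$ on $A_\alpha=\Phi_\alpha(A_\alpha')$ giving the stated rotation number and modulus, the Riemann--Hurwitz count, and the Vi\`ete computation identifying $b=(2a-3)/(a-2)$ and $c=a(a-2)/(2a-3)$ with the listed excluded values of $a$ --- is correct and agrees with the paper.
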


\begin{proof}
By construction, $F_\alpha:\EC\to\EC$ is a $C^\infty$-branched covering and hence a quasi-regular map which is analytic outside the closure of $A(\eta_r(\gamma_{-\alpha,r'}),\gamma_{\alpha,r})$.
Now we pull back the standard complex structure $\sigma_0$ on $\A_{r/r_\alpha,r_\alpha/r}$ to define an almost complex structure $\sigma_\alpha:=(\Psi_\alpha^{-1})^*(\sigma_0)$ in $A_\alpha':=A(\eta_r(\partial\Delta_{-\alpha}),\partial\Delta_\alpha)$. By \eqref{equ:psi-pm} and \eqref{equ:F-quasiregular}, $\Psi_\alpha^{-1}:A_\alpha'\to\A_{r/r_\alpha,r_\alpha/r}$ is a quasiconformal map which conjugates $F_\alpha: A_\alpha'\to A_\alpha'$ to $R_\alpha:\A_{r/r_\alpha,r_\alpha/r}\to \A_{r/r_\alpha,r_\alpha/r}$. Since $\sigma_0$ is invariant under $R_\alpha$, it follows that $\sigma_\alpha$ is invariant under $F_\alpha: A_\alpha'\to A_\alpha'$.

Next, we extend $\sigma_\alpha$ to $F_\alpha^{-n}(A_\alpha')$ by defining $\sigma_\alpha:=(F_\alpha^{\circ n})^*(\sigma_\alpha)$ in $F_\alpha^{-n}(A_\alpha')\setminus F_\alpha^{-(n-1)}(A_\alpha')$ for all $n\geqslant 1$. In the rest we define $\sigma_\alpha:=\sigma_0$. Note that $F_\alpha:\EC\to\EC$ is analytic outside the closure of $A(\eta_r(\gamma_{-\alpha,r'}),\gamma_{\alpha,r})$. This implies that $\sigma_\alpha$ has uniformly bounded dilatation. By the measurable Riemann mapping theorem \cite{AB60}, there exists a quasiconformal mapping $\Phi_\alpha:\EC\to\EC$ integrating the almost complex structure $\sigma_\alpha$ such that $\Phi_\alpha^*(\sigma_0)=\sigma_\alpha$ and $\Phi_\alpha:(\EC,\sigma_\alpha)\rightarrow (\EC,\sigma_0)$ is an analytic isomorphism. Hence $Q_\alpha:=\Phi_\alpha\circ F_\alpha\circ \Phi_\alpha^{-1}:(\EC,\sigma_0)\to(\EC,\sigma_0)$ is a rational map.
Since $\sigma_\alpha=(\Psi_\alpha^{-1})^*(\sigma_0)=\Phi_\alpha^*(\sigma_0)$ in $A_\alpha'$, it follows that
\begin{equation}\label{equ:Psi-Phi}
\Phi_\alpha\circ\Psi_\alpha:\A_{r/r_\alpha,r_\alpha/r} \to \Phi_\alpha(A_\alpha')
\end{equation}
is a conformal isomorphism which conjugates the rigid rotation $R_\alpha$ in $\A_{r/r_\alpha,r_\alpha/r}$ to $Q_\alpha:\Phi_\alpha(A_\alpha')\to \Phi_\alpha(A_\alpha')$. Hence $Q_\alpha$ has a fixed Herman ring $A_\alpha:=\Phi_\alpha(A_\alpha')$ with rotation number $\alpha$ and modulus $\frac{1}{\pi}\log\frac{r_\alpha}{r}$.

We assume that the quasiconformal mapping $\Phi_\alpha:\EC\to\EC$ is normalized by $\Phi_\alpha(0)=0$, $\Phi_\alpha(\infty)=\infty$ and $\Phi_\alpha(\omega_\alpha)=1$.
Then $Q_\alpha$ is a cubic rational map having two super-attracting fixed points $0$ and $\infty$, and two different critical points $1$ and $c:=\Phi_\alpha(\eta_r(\omega_{-\alpha}))$ in $\C$.
Hence $Q_\alpha$ has the form $Q_\alpha(z)=u z^2 (z-a)/(1-bz)$, where $u,a,b\in\C\setminus\{0\}$. A direct calculation shows that
\begin{equation}
Q_\alpha'(z)=\frac{uz}{(1-bz)^2}\big(-2b z^2+(3+ab)z-2a\big).
\end{equation}
Since $Q_\alpha$ has two critical points $1$ and $c$, we conclude that $b=\frac{2a-3}{a-2}$, $c=\frac{a(a-2)}{2a-3}$ and hence $Q_\alpha$ has the formula \eqref{equ:Q-alpha}. To guarantee that $Q_\alpha$ is a cubic rational map with a finite pole, the parameter $a$ must lie in $\C\setminus\big\{0,1,\frac{3}{2},2,3\big\}$.
\end{proof}

The above surgery construction depends on the parameters $\alpha$, $r$, $r'$ and the quasiconformal interpolation $\psi_{\alpha,0}$ (note that $\psi_{\alpha,0}$ depends on the conformal mappings $\psi_{\alpha,+}$ and $\psi_{\alpha,-}$). Hence a priori, as these parameters and maps vary, one may obtain different $Q_\alpha$'s.
In the rest of this section, we establish a rigidity result about the surgery, i.e., different ways of pastings and interpolations may induce the same $Q_\alpha$.

\subsection{Lebesgue measure of Julia sets}\label{subsec:zero-area}

For any rational map $f:\EC\to\EC$, we use $J(f)$ and $F(f)$, respectively, to denote the \textit{Julia set} and \textit{Fatou set} of $f$.
Let $Q_\alpha$ be the cubic rational map having a fixed Herman ring $A_\alpha$ obtained in Lemma \ref{lem:surgery-S-to-H}.
We use $B_\alpha^\infty$ and $B_\alpha^0$ to denote the \textit{immediate} super-attracting basins of $\infty$ and $0$ respectively (with respect to $Q_\alpha$).
Then
\begin{equation}\label{equ:J-alpha-0-infty}
J_\alpha^\infty:=\partial B_\alpha^\infty \text{\quad and\quad} J_\alpha^0:=\partial B_\alpha^0
\end{equation}
are quasiconformally homeomorphic to $J(P_\alpha)$ and $J(P_{-\alpha})$ respectively and they are both connected.
Here we say that a compact subset $X\subset\EC$ is quasiconformally homeomorphic to another $Y$ if there exists a quasiconformal map $\varphi$ defined in a neighborhood of $X$ such that $\varphi(X)=Y$.
We denote by $\MB_\alpha(\infty)$ and $\MB_\alpha(0)$ the super-attracting basins of $\infty$ and $0$ respectively.
Define
\begin{equation}\label{equ:J-Q-alplha-decom}
\MJ_\alpha^1:=\bigcup_{m\geqslant 0}Q_\alpha^{-m}(J_\alpha^\infty\cup J_\alpha^0) \text{\quad and\quad} \MJ_\alpha^2:=J(Q_\alpha)\setminus \MJ_\alpha^1.
\end{equation}

 \begin{lem}\label{lem:zero-area}
 For any Brjuno number $\alpha$, we have
 \begin{enumerate}
\item $F(Q_\alpha)= \MB_\alpha(\infty)\cup\MB_\alpha(0)\cup\MA_\alpha$, where $\MA_\alpha:=\bigcup_{m\geqslant 0}Q_\alpha^{-m}(A_\alpha)$. Moreover, every component of $\MB_\alpha(\infty)\cup\MB_\alpha(0)$ is simply connected, and every component $A$ of $\MA_\alpha$ is an annulus satisfying $\Mod(A)=\Mod(A_\alpha)$;
\item Each component of $\MJ_\alpha^1$ is quasiconformally homeomorphic to $J(P_\alpha)$ or $J(P_{-\alpha})$;
\item $\MJ_\alpha^2$ is a totally disconnected set having area zero.
\end{enumerate}
 \end{lem}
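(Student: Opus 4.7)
For part (a), the inclusion $\MB_\alpha(\infty) \cup \MB_\alpha(0) \cup \MA_\alpha \subset F(Q_\alpha)$ follows immediately from Lemma \ref{lem:surgery-S-to-H}. For the reverse inclusion I would invoke Sullivan's non-wandering theorem together with the classical classification of periodic Fatou components. The four critical points $\{0,1,\infty,c\}$ of $Q_\alpha$ are distributed as follows: $0$ and $\infty$ are the super-attracting fixed points giving $\MB_\alpha(0)$ and $\MB_\alpha(\infty)$, while $1 = \Phi_\alpha(\omega_\alpha)$ and $c = \Phi_\alpha(\eta_r(\omega_{-\alpha}))$ have $Q_\alpha$-orbits that, via the conjugacy $\Phi_\alpha \circ F_\alpha = Q_\alpha \circ \Phi_\alpha$, correspond to the $P_{\pm\alpha}$-orbits of the free critical points $\omega_{\pm\alpha} \in J(P_{\pm\alpha})$ whose $\omega$-limit sets contain $\partial \Delta_{\pm\alpha}$. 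Hence the $Q_\alpha$-orbits of $1$ and $c$ accumulate only on the two boundary components of $A_\alpha$, so no additional attracting, parabolic, Siegel, or Herman cycle can be supported. The topological type of individual components then comes from Riemann-Hurwitz: since no critical point other than $\infty$ (resp.\ $0$) lies in any component of $\MB_\alpha(\infty)$ (resp.\ $\MB_\alpha(0)$), each non-immediate component maps by $Q_\alpha^n$ as a conformal isomorphism onto the simply connected immediate basin. The analogous argument shows each component of $\MA_\alpha$ is an annular cover of $A_\alpha$; for the modulus identity $\Mod(V) = \Mod(A_\alpha)$ I would verify that these covers have degree one by induction on $n$, tracing preimages through the $F_\alpha$-picture: at the first level $F_\alpha^{-1}(A_\alpha')$ consists of three components---namely $A_\alpha'$ itself together with the annular pieces inside the ``other'' preimage disks $\Delta_\alpha'$ of $\Delta_\alpha$ under $P_\alpha$ and $\eta_r(\Delta_{-\alpha}')$ of $\Delta_{-\alpha}$ under $P_{-\alpha}$---each mapped with degree one, and the inductive step is identical.

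Part (b) then follows by extending the internal isomorphisms to the boundary. For $V \subset \MB_\alpha(\infty)$ with $Q_\alpha^n(V) = B_\alpha^\infty$, the conformal isomorphism $Q_\alpha^n : V \to B_\alpha^\infty$ extends continuously to the boundary by the standard proper-map theory for the polynomial-like restriction, and composing with the quasiconformal homeomorphism $\Phi_\alpha^{-1} : J_\alpha^\infty \to J(P_\alpha)$ yields a quasiconformal equivalence $\partial V \to J(P_\alpha)$. The case $V \subset \MB_\alpha(0)$ is symmetric with $J(P_{-\alpha})$.

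For part (c) I would exploit the uniform modulus bound from (a). One first verifies that $\MJ_\alpha^1$ coincides with the union of the boundaries of all Fatou components of $Q_\alpha$: preimages of $J_\alpha^\infty = \partial B_\alpha^\infty$ are boundaries of preimages of $B_\alpha^\infty$, and the two boundary components of each piece of $\MA_\alpha$ already lie in $J_\alpha^\infty \cup J_\alpha^0$ through the conjugacy $\Phi_\alpha$. Hence a point $z \in \MJ_\alpha^2$ is a Julia point lying on no single Fatou component's boundary; since $F(Q_\alpha) \neq \emptyset$ makes $J(Q_\alpha)$ nowhere dense, such $z$ must be accumulated by infinitely many disjoint preimages of $A_\alpha$ whose closures do not individually contain $z$. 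By (a) each such preimage is an annulus of modulus exactly $m_0 := \Mod(A_\alpha) > 0$, and one can extract a strictly nested subsequence around $z$ with total modulus $+\infty$. The Gr\"{o}tzsch inequality then forces the enclosed disks to shrink to $\{z\}$, which establishes total disconnectedness; combining the modulus bound with Koebe's distortion theorem gives geometric decay of disk diameters with ratio depending only on $m_0$, and a standard covering argument then yields absolute area zero. The main obstacle, I expect, is making the nested-annuli picture precise---specifically, constructing strictly nested (rather than merely accumulating) preimages of $A_\alpha$ around every $z \in \MJ_\alpha^2$---which rests on the density of the grand orbit of $\partial A_\alpha$ in $J(Q_\alpha)$ and the combinatorial structure of the preimage tree inherited from the surgery.
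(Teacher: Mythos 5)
Your part (a) matches the paper's strategy: classify periodic Fatou components via the critical orbits of $1$ and $c$ lying in $J_\alpha^\infty$ and $J_\alpha^0$, then use the absence of critical points in the preimage trees (Riemann--Hurwitz and a degree-one count) to get simple connectivity and the modulus identity. The remaining two parts have genuine gaps.

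In part (b) the mechanism is wrong. Continuous boundary extension of the conformal isomorphism $Q_\alpha^{\circ n}\colon V\to B_\alpha^\infty$ needs $J_\alpha^\infty$ to be locally connected, which is not guaranteed for a general Brjuno $\alpha$; and even granting the extension, a homeomorphism $\partial V\to J_\alpha^\infty$ is weaker than the asserted conclusion, since ``quasiconformally homeomorphic'' here (see the paper's footnote to \eqref{equ:J-alpha-0-infty}) means a quasiconformal map defined on a full open neighborhood of $\partial V$. The paper sidesteps boundary extension entirely: a depth-$m$ preimage component $\widetilde{J}$ of $J_\alpha^\infty$ or $J_\alpha^0$ is captured inside a Jordan domain $\widetilde{A}\cup(\widetilde{A})^{\Int}$ on which $Q_\alpha^{\circ m}$ is already conformal because the region is disjoint from the forward critical orbits established in (a), so composing this conformal restriction with the surgery's qc conjugacy on a neighborhood of $J_\alpha^\infty$ or $J_\alpha^0$ yields the neighborhood qc map directly.

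In part (c) you correctly identify the high-level picture (nested annuli of uniform modulus, total disconnectedness via Gr\"otzsch), but you flag the nested-annuli construction as ``the main obstacle'' and defer it --- and that construction is precisely the content of the paper's proof. The paper builds by induction a tree of annulus families $\ME_1,\ME_2,\dots$, each $\ME_n$ consisting of countably many disjoint unnested annuli of modulus $\Mod(A_\alpha)$, each annulus of $\ME_{n+1}$ nested inside a unique annulus of $\ME_n$, with an odd/even alternation tracking whether the conformal pullback $Q_\alpha^{\circ m}$ hits $A_\alpha\cup A_\alpha^{\Int}$ or $A_\alpha\cup A_\alpha^{\Ext}$; this identifies $\MJ_\alpha^2$ with $\bigcap_n\MF_n$. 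Absolute area zero then comes from McMullen's area--modulus estimate (\cite[Theorem 2.16 and Lemma 2.17]{McM94b}), not from a Koebe-based covering argument. Note also that ``absolute area zero'' is strictly stronger than area zero --- it concerns all univalent images of the complement --- and your covering sketch, even if completed, would only give ordinary area zero; the modulus estimate must be applied to the image annuli under an arbitrary univalent map, which is exactly what McMullen's criterion does.
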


\begin{proof}
(a) By the surgery construction, $J_\alpha^\infty$ and $J_\alpha^0$ are forward invariant Julia components of $Q_\alpha$, and
\begin{equation}
Q_\alpha: J_\alpha^\infty\to J_\alpha^\infty \text{\quad and\quad} Q_\alpha: J_\alpha^0\to J_\alpha^0
\end{equation}
are quasiconformally conjugate to $P_\alpha|_{J(P_\alpha)}$ and $P_{-\alpha}|_{J(P_{-\alpha})}$ respectively.
In particular, the forward orbit $\MO^+(1)$ of the critical point $1$ is contained in $J_\alpha^\infty$ and the forward orbit $\MO^+(c)$ of another critical point $c=\frac{a(a-2)}{2a-3}$ is contained in $J_\alpha^0$.
Moreover, if $U\not\in \{B_\alpha^\infty, B_\alpha^0\}$ is a Fatou component of $Q_\alpha$ satisfying $\partial U \cap (J_\alpha^\infty\cup J_\alpha^0)\neq\emptyset$, then there exists $m\geqslant 0$ such that $Q_\alpha^{\circ m}(U)=A_\alpha$.

Note that the remaining critical points of $Q_\alpha$ are $0$ and $\infty$, which are super-attracting fixed points. This implies that $Q_\alpha$ has exactly $3$ periodic Fatou components $B_\alpha^\infty$, $B_\alpha^0$ and $A_\alpha$ by the Fatou-Shishikura inequality \cite[Corollary 2]{Shi87}. Therefore, for any Fatou component $U$ of $Q_\alpha$, there exists $m\geqslant 0$ such that $Q_\alpha^{\circ m}(U)=B_\alpha^\infty$, $B_\alpha^0$ or $A_\alpha$.

For an annulus $A$ in $\C$, recall that $A^{\Ext}$ is the connected component of $\widehat{\mathbb{C}} \setminus A$ containing $\infty$ and $A^{\Int}$ is the other component of $\EC\setminus A$.
Then $A\cup A^{\Int}$ and $A\cup A^{\Ext}$ are simply connected domains. Note that
\begin{equation}\label{equ:A-alpha-pm}
Q_\alpha^{-1}(A_\alpha)=A_\alpha\cup A_\alpha^+\cup A_\alpha^-,
\end{equation}
where $A_\alpha^+$ and $A_\alpha^-$ are annuli contained in $A_\alpha^\Ext$ and $A_\alpha^\Int$ respectively. Then
\begin{equation}\label{equ:Q-alpha-pm}
Q_\alpha: A_\alpha^+\cup (A_\alpha^+)^\Int\to A_\alpha\cup A_\alpha^{\Int} \text{\quad and\quad}
Q_\alpha: A_\alpha^-\cup (A_\alpha^-)^\Int\to A_\alpha\cup A_\alpha^{\Ext}
\end{equation}
are conformal maps.
Let $B_\alpha^+$ and $B_\alpha^-$ be the unique Fatou components satisfying
\begin{equation}\label{equ:B-alpha-pm}
\begin{split}
B_\alpha^+\subset (A_\alpha^+)^\Int, & \quad Q_\alpha^{-1}(B_\alpha^0)=B_\alpha^0\cup B_\alpha^+ \text{\quad and} \\
B_\alpha^-\subset (A_\alpha^-)^\Int,  & \quad Q_\alpha^{-1}(B_\alpha^\infty)=B_\alpha^\infty\cup B_\alpha^-.
\end{split}
\end{equation}
Then $B_\alpha^+$ contains the preimage of $0$, at $a$, and $B_\alpha^-$ contains the pole $\frac{1}{b}=\frac{a-2}{2a-3}$.
Note that $A_\alpha^+\cup (A_\alpha^+)^\Int$ and $A_\alpha^-\cup (A_\alpha^-)^\Int$ are disjoint from the forward orbits of the critical points of $Q_\alpha$.
Part (a) follows immediately.

\medskip
(b) Let $J_\alpha^+$ and $J_\alpha^-$ be the unique Julia components satisfying
\begin{equation}\label{equ:J-alpha-pm}
\begin{split}
J_\alpha^+\subset (A_\alpha^+)^\Int, & \quad Q_\alpha^{-1}(J_\alpha^0)=J_\alpha^0\cup J_\alpha^+ \text{\quad and} \\
J_\alpha^-\subset (A_\alpha^-)^\Int,  & \quad Q_\alpha^{-1}(J_\alpha^\infty)=J_\alpha^\infty\cup J_\alpha^-.
\end{split}
\end{equation}
By \eqref{equ:Q-alpha-pm}, for any component $\widetilde{J}$ of $Q_\alpha^{-m}(J_\alpha^+)\setminus Q_\alpha^{-(m-1)}(J_\alpha^+)$ with $m\geqslant 1$, there exists a component $\widetilde{A}$ of $Q_\alpha^{-m}(A_\alpha^+)\setminus Q_\alpha^{-(m-1)}(A_\alpha^+)$ such that $\widetilde{J}\subset \widetilde{A}\cup (\widetilde{A})^\Int$ and $Q_\alpha^{\circ m}:\widetilde{A}\cup (\widetilde{A})^\Int\to A_\alpha^+\cup (A_\alpha^+)^\Int$ is conformal. The case for $J_\alpha^-$ is the same. Hence Part (b) holds since $J_\alpha^\infty$ and $J_\alpha^0$ are quasiconformally homeomorphic to $J(P_\alpha)$ and $J(P_{-\alpha})$ respectively.

 \begin{figure}[tpb]
  \setlength{\unitlength}{1mm}
  \setlength{\fboxsep}{0pt}
  \centering
  \fbox{\includegraphics[width=0.9\textwidth]{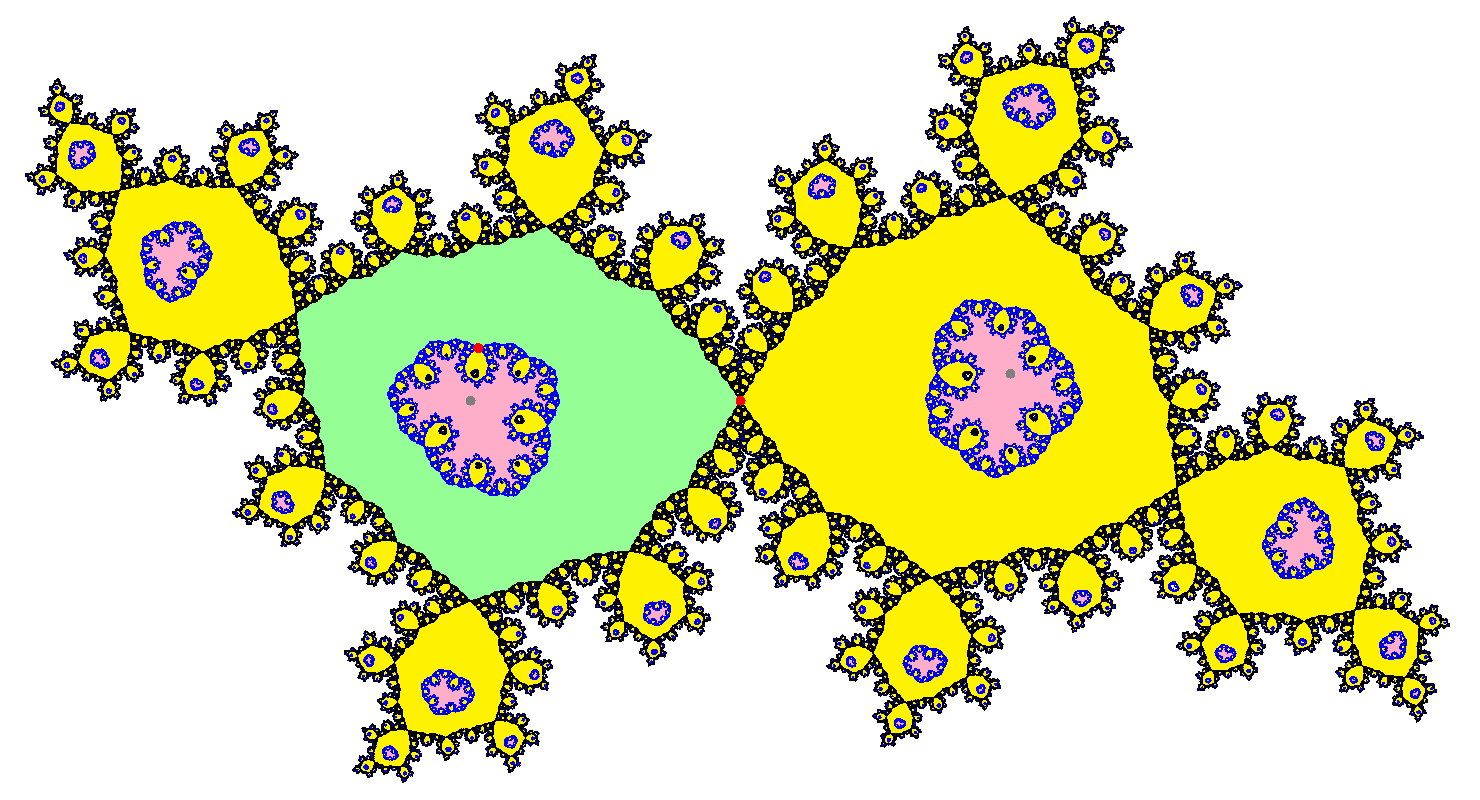}}
  \put(-75.5,33.3){$A_\alpha$}
  \put(-57.5,33.3){$A_\alpha^+$}
  \put(-67.5,33.3){$1$}
  \put(-87.3,40.5){$c$}
  \put(-42.5,34){$a$}
  \put(-87.5,31.1){\small{$0$}} \vskip0.3cm %
  \fbox{\includegraphics[width=0.435\textwidth]{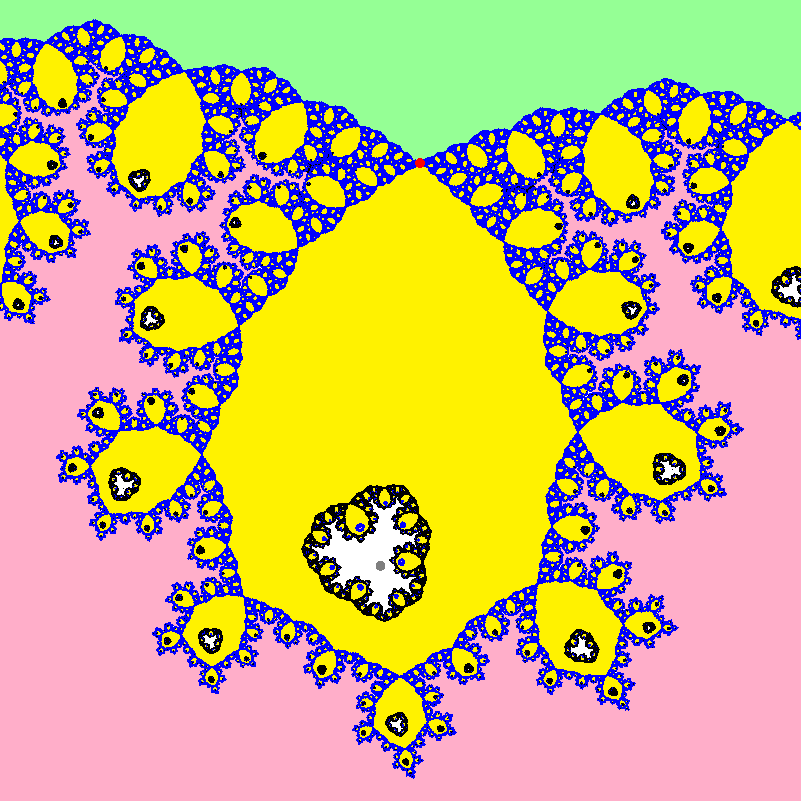}}\quad
  \fbox{\includegraphics[width=0.435\textwidth]{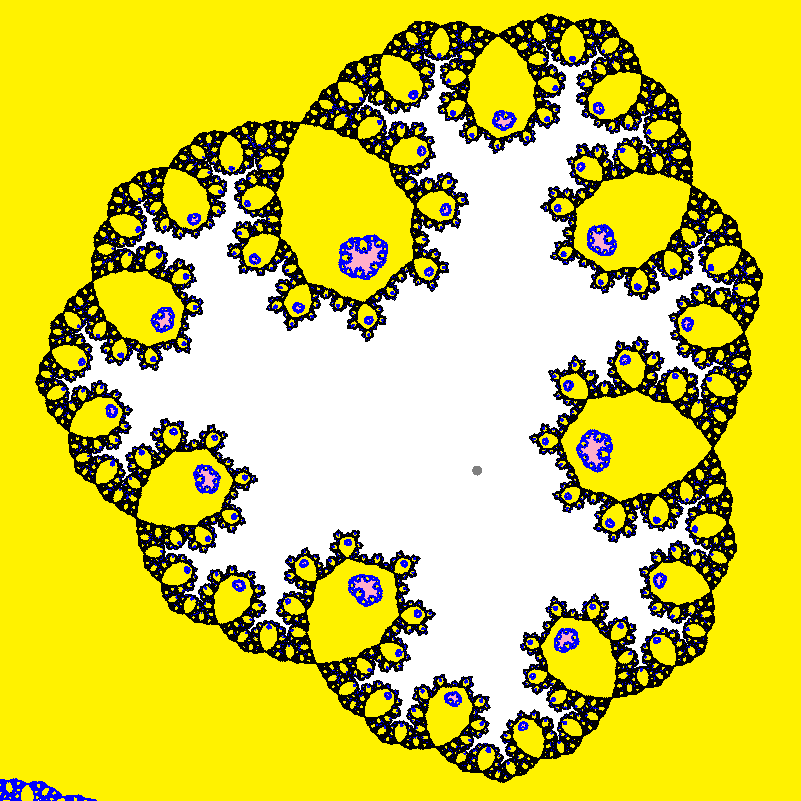}}
  \put(-90,57){$A_\alpha$}
  \put(-99,34){$A_\alpha^-$}
  \put(-96,51){$c$}
  \put(-101.7,18.6){\tiny{$1/b$}}
  \put(-24,24.6){$\frac{1}{b}$}
  \put(-57,57){$A_\alpha^-$}
  \put(-36,28){$B_\alpha^-$}
  \caption{A cubic map $Q_\alpha$ with a \textit{non-symmetric} Herman ring $A_\alpha$ (colored green) and its successive zooms, where $a=2+\frac{\ii}{10}$ and $u\approx -3.98404183+3.28819628\ii$ are chosen such that the rotation number is $\alpha=(\sqrt{5}-1)/2$. The Julia components $J_\alpha^\infty$ and $J_\alpha^0$ with preimages, the preimages of $A_\alpha$ and two super-attracting basins $\MB_\alpha(0)$, $\MB_\alpha(\infty)$ are colored black, blue, yellow, pink and white respectively. Some special points (critical points $1$ and $c=\frac{a(a-2)}{2a-3}$, zeros $0$ and $a$, and the pole $\frac{1}{b}=\frac{a-2}{2a-3}$) are also marked.}
  \label{Fig-Herman-no-symm}
\end{figure}

\medskip
(c) We prove this part by using a criterion of McMullen \cite[\S 2.8]{McM94b}. The main difference is that in McMullen's case the number of the annuli in each level is finite while in our case the number of annuli forms a countable set. Let $A_\alpha^+$ and $A_\alpha^-$ be the preimages of $A_\alpha$ contained in $A_\alpha^\Ext$ and $A_\alpha^\Int$ respectively (see \eqref{equ:A-alpha-pm}). If there exist domains $\Omega^+$ and $\Omega^-$ such that $Q_\alpha^{\circ m}: \Omega^+ \to A_\alpha\cup A_\alpha^{\Int}$ and $Q_\alpha^{\circ m}: \Omega^- \to A_\alpha\cup A_\alpha^{\Ext}$ are conformal for some $m\geqslant 1$, by \eqref{equ:Q-alpha-pm} and \eqref{equ:B-alpha-pm}, we have
\begin{equation}\label{equ:path}
Q_\alpha^{\circ (m-1)}(\Omega^+)=A_\alpha^+\cup (A_\alpha^+)^{\Int} \text{\quad and\quad}
Q_\alpha^{\circ (m-1)}(\Omega^-)=A_\alpha^-\cup (A_\alpha^-)^{\Int}.
\end{equation}

\textbf{Step 1} (Definition of $\ME_1$ and $\ME_2$). Define
\begin{equation}\label{equ:E-1}
\ME_1:= A_\alpha\cup
\left\{
\, A\,
\left|
\begin{array}{l}
A \text{ is a component of } Q_\alpha^{-m}(A_\alpha) \text{ for some } m\geqslant 1 \\
\text{and } Q_\alpha^{\circ k}(A)\subset A_\alpha^{\Ext} \text{ for all }0\leqslant k\leqslant m-1
\end{array}
\right.
\right\}.
\end{equation}
Then $\ME_1$ consists of countably many pairwise disjoint non-nested annuli with the same modulus as $A_\alpha$, and for each $A_1\in\ME_1\setminus\{A_\alpha\}$, there exists a unique $m=m(A_1)\geqslant 1$ such that
\begin{equation}\label{equ:A-1}
Q_\alpha^{\circ m}: A_1\cup A_1^{\Int} \to A_\alpha\cup A_\alpha^{\Int}
\end{equation}
is conformal and $Q_\alpha^{\circ m}(A_1^{\Int})=A_\alpha^{\Int}$. Moreover,
\begin{equation}\label{equ:A-alpha-ext}
A_\alpha^{\Ext}=B_\alpha^\infty\cup J_\alpha^\infty\cup \big\{A_1\cup A_1^{\Int}: A_1\in \ME_1\setminus\{A_\alpha\}\big\}.
\end{equation}

Define
\begin{equation}\label{equ:E-2-A-alpha}
\ME_2^{A_\alpha}:=
\left\{
\, A\,
\left|
\begin{array}{l}
A \text{ is a component of } Q_\alpha^{-m}(A_\alpha) \text{ for some } m\geqslant 1 \\
\text{and } Q_\alpha^{\circ k}(A)\subset A_\alpha^{\Int} \text{ for all }0\leqslant k\leqslant m-1
\end{array}
\right.
\right\}.
\end{equation}
Then $\ME_2^{A_\alpha}$ consists of countably many pairwise disjoint non-nested annuli with the same modulus as $A_\alpha$, and for each $A_2\in\ME_2^{A_\alpha}$, there exists a unique $m=m(A_2)\geqslant 1$ such that
\begin{equation}\label{equ:A-2}
Q_\alpha^{\circ m}: A_2\cup A_2^{\Int} \to A_\alpha\cup A_\alpha^{\Ext}
\end{equation}
is conformal and $Q_\alpha^{\circ m}(A_2^{\Int})=A_\alpha^{\Ext}$. Moreover,
\begin{equation}\label{equ:A-alpha-int}
A_\alpha^{\Int}=B_\alpha^0\cup J_\alpha^0\cup \big\{A_2\cup A_2^{\Int}: A_2\in \ME_2^{A_\alpha}\big\}.
\end{equation}
For any $A_1\in\ME_1\setminus\{A_\alpha\}$ (hence satisfying \eqref{equ:A-1} with $m=m(A_1)$), we define
\begin{equation}
\ME_2^{A_1}:=\big\{A\,|\,A\subset A_1^\Int \text{ and } Q_\alpha^{\circ m}(A)\in \ME_2^{A_\alpha}\big\}.
\end{equation}
Denote
\begin{equation}
\ME_2:=\bigcup_{A_1\in\ME_1} \ME_2^{A_1}.
\end{equation}
Then $\ME_2$ consists of countably many pairwise disjoint non-nested annuli with the same modulus as $A_\alpha$ and any $A_2\in\ME_2$ is nested inside a unique $A_1\in\ME_1$.
See Figure \ref{Fig-Herman-no-symm}.

\medskip
\textbf{Step 2} (The induction).
Suppose that $\ME_1$, $\ME_2$, $\cdots$, $\ME_n$ have been defined for some $n\geqslant 2$, such that it is a sequence of disjoint open sets in $\C$ satisfying
\begin{itemize}
\item $\ME_k$ consists of countably many pairwise disjoint non-nested annuli with the same modulus as $A_\alpha$, where $1\leqslant k\leqslant n$;
\item Each $A_k\in\ME_k$ is nested inside a unique $A_{k-1}\in\ME_{k-1}$, where $2\leqslant k\leqslant n$;
\item If $1\leqslant k\leqslant n$ is odd, then for any $A_k\in\ME_k\setminus\{A_\alpha\}$ (Note that $A_\alpha\in\ME_1$ and $A_\alpha\not\in\ME_k$ for any $2\leqslant k\leqslant n$), there exists a unique $m=m(A_k)\geqslant 1$ such that $Q_\alpha^{\circ m}: A_k\cup A_k^{\Int} \to A_\alpha\cup A_\alpha^{\Int}$ is conformal; and
\item If $2\leqslant k\leqslant n$ is even, then for any $A_k\in\ME_k$, there exists a unique $m=m(A_k)\geqslant 1$ such that $Q_\alpha^{\circ m}: A_k\cup A_k^{\Int} \to A_\alpha\cup A_\alpha^{\Ext}$ is conformal.
\end{itemize}
Then for odd $1\leqslant k\leqslant n$ and each $A_k\in\ME_k\setminus\{A_\alpha\}$, we have
\begin{equation}
A_k^{\Int}=B^{A_k}\cup J^{A_k}\cup \big\{A\cup A^{\Int}: A\subset A_k^\Int \text{ and } Q_\alpha^{\circ m}(A)\in \ME_2^{A_\alpha}\big\},
\end{equation}
where $B^{A_k}$ and $J^{A_k}$ are the unique components of $Q_\alpha^{-m}(B_\alpha^0)$ and $Q_\alpha^{-m}(J_\alpha^0)$ contained in $A_k^{\Int}$ respectively with $m=m(A_k)$.
For even $2\leqslant k\leqslant n$ and each $A_k\in\ME_k$, we have
\begin{equation}
A_k^{\Int}=B^{A_k}\cup J^{A_k}\cup \big\{A\cup A^{\Int}: A\subset A_k^\Int \text{ and } Q_\alpha^{\circ m}(A)\in \ME_1\setminus\{A_\alpha\}\big\},
\end{equation}
where $B^{A_k}$ and $J^{A_k}$ are the unique components of $Q_\alpha^{-m}(B_\alpha^\infty)$ and $Q_\alpha^{-m}(J_\alpha^\infty)$ contained in $A_k^{\Int}$ respectively with $m=m(A_k)$.

If $n$ is odd, for $A_n\in\ME_n$, we define
\begin{equation}
\ME_{n+1}^{A_n}:=\big\{A\,|\,A\subset A_n^\Int \text{ and } Q_\alpha^{\circ m}(A)\in \ME_2^{A_\alpha}\big\},
\end{equation}
where $m=m(A_n)\geqslant 1$. For any $A_{n+1}\in\ME_{n+1}^{A_n}$, by \eqref{equ:A-2} there exists $m'=m'(A_{n+1})\geqslant 1$ such that $Q_\alpha^{\circ m'}: A_{n+1}\cup A_{n+1}^{\Int} \to A_\alpha\cup A_\alpha^{\Ext}$ is conformal.
If $n$ is even, for $A_n\in\ME_n$, we define
\begin{equation}
\ME_{n+1}^{A_n}:=\big\{A\,|\,A\subset A_n^\Int \text{ and } Q_\alpha^{\circ m}(A)\in \ME_1\setminus\{A_\alpha\}\big\},
\end{equation}
where $m=m(A_n)\geqslant 1$. For any $A_{n+1}\in\ME_{n+1}^{A_n}$,  by \eqref{equ:A-1} there exists $m'=m'(A_{n+1})\geqslant 1$ such that $Q_\alpha^{\circ m'}: A_{n+1}\cup A_{n+1}^{\Int} \to A_\alpha\cup A_\alpha^{\Int}$ is conformal.
In both cases, we define
\begin{equation}
\ME_{n+1}:=\bigcup_{A_n\in\ME_n} \ME_{n+1}^{A_n}.
\end{equation}
Then $\ME_{n+1}$ consists of countably many pairwise disjoint non-nested annuli with the same modulus as $A_\alpha$ and any $A_{n+1}\in\ME_{n+1}$ is nested inside a unique $A_n\in\ME_n$.
This finishes the induction.

We have a sequence of disjoint open sets $\ME_1$, $\ME_2$, $\cdots$, $\ME_n$ , $\cdots$ in $\C$ satisfying
\begin{itemize}
\item $\ME_n$ consists of countably many pairwise disjoint non-nested annuli with the same modulus as $A_\alpha$, for all $n\geqslant 1$;
\item Each $A_n\in\ME_n$ is nested inside a unique $A_{n-1}\in\ME_{n-1}$, for all $n\geqslant 2$;
\item If $n\geqslant 1$ is odd, then for any $A_n\in\ME_n\setminus\{A_\alpha\}$, there exists $m=m(A_n)\geqslant 1$ such that $Q_\alpha^{\circ m}: A_n\cup A_n^{\Int} \to A_\alpha\cup A_\alpha^{\Int}$ is conformal; and
\item If $n\geqslant 2$ is even, then for any $A_n\in\ME_n$, there exists $m=m(A_n)\geqslant 1$ such that $Q_\alpha^{\circ m}: A_n\cup A_n^{\Int} \to A_\alpha\cup A_\alpha^{\Ext}$ is conformal.
\end{itemize}

\medskip
\textbf{Step 3} (Applying McMullen's criterion).
For $n\geqslant 1$, denote
\begin{equation}
\MF_n:=\bigcup_{A_n\in\ME_n} A_n^{\Int} \text{\quad and\quad} \MF:=\bigcap_{n\geqslant 1}\MF_n.
\end{equation}
Obviously $\MF$ is non-empty.
For any $z\in\MF$, there exists a sequence of nested annuli $(A_n)_{n\geqslant 1}$ such that $z\in A_n^{\Int}$ and $A_n\in\ME_n$ for all $n\geqslant 1$.
Since $\Mod(A_n)=\Mod(A_\alpha)$ for any $n\geqslant 1$, it follows that $\bigcap_{n\geqslant 1}A_n^{\Int}=\{z\}$. Since each $A_n$ is a Fatou component, this implies that $z\in J(Q_\alpha)$ and $\MF$ is a totally disconnected subset of $J(Q_\alpha)$.
By the definition of $\MF_n$, $J(Q_\alpha)$ is the disjoint union of $\MJ_\alpha^1$ and $\MF$. Hence we have $\MF=\MJ_\alpha^2$.

It remains to prove that $\MF$ has area zero. This follows by a similar proof of \cite[Theorem 2.16]{McM94b}. Applying the area-modulus estimate inequality (see \cite[Lemma 2.17]{McM94b}), we have
\begin{equation}
\area(\MF_n)\leqslant \Big(\area(\MF_1)+\sum_{A_1\in\ME_1}\area(A_1)\Big)\sup_{\MA_n}\prod_{k=1}^n\frac{1}{1+4\pi\,\Mod(A_k)},
\end{equation}
where $\MA_n$ denotes the collection of all sequences of nested annuli $(A_k)_{1\leqslant k\leqslant n}$ with $A_k\in \ME_k$.
Since $\Mod(A_k)=\Mod(A_\alpha)$ for all $k$, we have $\area(\MF_n)\to 0$ as $n\to\infty$ and hence $\area(\MF)=\area(\MJ_\alpha^2)=0$.
\end{proof}

Note that $\MJ_\alpha^2$ is not a Cantor set since it is not compact. Indeed, $\MJ_\alpha^2$ is a completely invariant proper subset of the Julia set of $Q_\alpha$. Every point of $\MJ_\alpha^2$ is a \textit{buried component}, i.e., it is a Julia component which is disjoint from the boundary of any Fatou component of $Q_\alpha$. This phenomenon for cubic Blaschke products was first confirmed in \cite{Qia95} (see also \cite{Bea91a}).

\medskip
An irrational number $\alpha\in\R\setminus\Q$ is called \textit{bounded type} if $\sup_{n\geqslant 1}\{a_n\}<+\infty$, where $[a_0;a_1,a_2,\cdots,a_n,\cdots]$ is the continued fraction expansion of $\alpha$. There are fruitful results on the topology and geometry of the Julia set of $P_\alpha(z)=e^{2\pi\ii\alpha}z+z^2$ when $\alpha$ is of bounded type. Douady and Herman proved that $\partial\Delta_\alpha$ is a quasi-circle passing through the unique critical point $\omega_\alpha=-e^{2\pi\ii\alpha}/2$ based on a cubic Blaschke model (see \cite{Dou87}, \cite{Her87}). Petersen proved that $J(P_\alpha)$ is locally connected and has area zero \cite{Pet96} (see also \cite{Yam99}).
McMullen improved the area result and proved that $J(P_\alpha)$ has Hausdorff dimension strictly less than two \cite{McM98b}.

Note that the set of bounded type irrationals occupies zero Lebesgue measure among all irrational numbers. In 2004, Petersen and Zakeri introduced the following type of irrational numbers \cite{PZ04}:
\begin{equation}
\PZ:=\big\{\alpha\in\R\setminus\Q: \log a_n= \MO(\sqrt{n}) \text{ as }n\to\infty\big\}.
\end{equation}
They proved the following remarkable result.

\begin{thm}[{Petersen-Zakeri}]\label{thm-PZ}
The set $\PZ$ occupies full measure in $\R\setminus\Q$, and for all $\alpha\in\PZ$, $J(P_\alpha)$ is locally connected and has area zero. In particular, $\partial\Delta_\alpha$ is a Jordan curve passing through the critical point $\omega_\alpha$.
\end{thm}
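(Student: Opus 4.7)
The plan is to prove the three assertions separately: full measure of $\PZ$, local connectivity of $J(P_\alpha)$, and $\area(J(P_\alpha))=0$. The first is a soft ergodic-theoretic fact; the last two rest on a Yoccoz-type puzzle adapted to the rotational combinatorics, passing first through a cubic Blaschke model.

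For full measure, I would use the Gauss map $G(x)=\{1/x\}$ on $(0,1)$ and its invariant measure $d\mu=\frac{1}{\log 2}\frac{dx}{1+x}$. Under $\mu$ one has the uniform tail bound $\mu\{a_n\geqslant k\}\asymp 1/k$ for every $n$, and consequently $\mu\{\log a_n\geqslant C\sqrt{n}\}\asymp e^{-C\sqrt{n}}$. Summing over $n$ and applying the Borel-Cantelli lemma gives $\log a_n=\MO(\sqrt{n})$ for $\mu$-a.e.\ $\alpha$, hence for Lebesgue-a.e.\ $\alpha$ since $\mu$ is equivalent to Lebesgue on $(0,1)$.

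For the dynamical statements I would transport the problem to the cubic Blaschke product $B_\alpha(z)=e^{2\pi\ii t}z^2\frac{z-3}{1-3z}$ introduced in the excerpt, using the Douady-Ghys-Herman-Shishikura surgery: replace $B_\alpha|_{\D}$ by the rigid rotation $R_\alpha$ via a quasiconformal interpolation, and invoke the measurable Riemann mapping theorem. The result is a map quasiconformally conjugate to $P_\alpha$, with $\partial\Delta_\alpha$ the quasiconformal image of $\T$. Thus it suffices to prove that $J(B_\alpha)$ is locally connected and has area zero. On this side, following Petersen, I would build a puzzle whose depth-$0$ pieces are obtained by cutting $\EC$ along $\T$ together with finitely many external rays landing at preimages of the critical point, and whose depth-$n$ pieces are indexed by length-$q_n$ itineraries. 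Standard arguments then reduce local connectivity and the area estimate to showing that the sequence of puzzle pieces containing any point of $J(B_\alpha)\setminus\T$ shrinks to that point, together with McMullen's absolute-area-zero criterion (the same one invoked for Lemma \ref{lem:zero-area}).

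The hard step will be the \emph{a priori} modulus bound: the annulus separating a depth-$q_{n+1}$ puzzle piece from the boundary of its depth-$q_n$ parent must have modulus at least some explicit positive function of $a_{n+1}$, so that the PZ hypothesis $\log a_n=\MO(\sqrt{n})$ makes the total modulus diverge. For bounded type $\alpha$ this bound is a consequence of the compactness of the $q_n$-renormalizations of critical circle maps, but in the PZ regime the renormalizations can degenerate as $a_n\to\infty$, so the estimate must become genuinely quantitative in $a_n$; one needs a high-type version of the complex bounds for Siegel renormalization. Establishing this quantitative bound, and calibrating it against the rate $\sqrt{n}$ in the definition of $\PZ$, is the technical core of the Petersen-Zakeri theorem and is exactly where the threshold $\log a_n=\MO(\sqrt{n})$ is dictated.
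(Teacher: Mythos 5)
This is a cited result; the paper does not prove it, but simply invokes Petersen--Zakeri \cite{PZ04} (and, for the bounded type antecedent, Petersen \cite{Pet96}). So the relevant comparison is with that literature, not with anything in the present text.

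Your full-measure argument is fine: the Gauss-measure tail estimate $\mu\{a_n\geqslant k\}\asymp 1/k$ is uniform in $n$, the resulting sum $\sum_n e^{-C\sqrt{n}}$ converges for every $C>0$, and the convergence half of Borel--Cantelli (which needs no independence) gives $\log a_n=\MO(\sqrt{n})$ for $\mu$-a.e.\ $\alpha$, hence Lebesgue-a.e.\ by equivalence of the measures.

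The dynamical half has a genuine gap, and it is exactly at the step you treat as routine. You propose to run the Douady--Ghys--Herman--Shishikura surgery on the Blaschke model ``via a quasiconformal interpolation, and invoke the measurable Riemann mapping theorem,'' concluding that $P_\alpha$ is quasiconformally conjugate to the modified Blaschke product and that $\partial\Delta_\alpha$ is a quasiconformal image of $\T$. This works only for bounded type $\alpha$: in that regime $B_\alpha|_\T$ is quasisymmetrically linearizable (Herman, Swi\c{a}tek), the surgery Beltrami coefficient is bounded, and $\partial\Delta_\alpha$ is a quasicircle (Douady--Herman). For $\alpha\in\PZ$ that is not of bounded type, the conjugacy of $B_\alpha|_\T$ to the rigid rotation is typically \emph{not} quasisymmetric, the Beltrami coefficient of the interpolating map has $\|\mu\|_\infty=1$, and the measurable Riemann mapping theorem does not apply. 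The whole point of Petersen--Zakeri is to replace this with a \emph{trans-quasiconformal} (David) surgery: one shows, using quantitative real a priori bounds for critical circle maps whose partial quotients grow sub-exponentially, that the surgery Beltrami coefficient satisfies an exponential-area (David) integrability condition, and then integrates it via David's extension of the measurable Riemann mapping theorem. The resulting conjugacy is a David homeomorphism, not a quasiconformal one, and in particular $\partial\Delta_\alpha$ is a Jordan curve through the critical point but need not be a quasicircle. So your sentence ``$\partial\Delta_\alpha$ the quasiconformal image of $\T$'' is false in the PZ regime, and the ``hard step'' you identify (complex bounds in the puzzle) is downstream of the harder step you glossed over (the David surgery). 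The threshold $\log a_n=\MO(\sqrt{n})$ is calibrated precisely against the exponential integrability required by David's theorem, not primarily against modulus divergence in a Yoccoz puzzle.
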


In the following we refer all irrational numbers in $\PZ$ the \textit{Petersen-Zakeri type}.

\begin{cor}\label{cor:zero-area}
For any Brjuno number $\alpha$, we have $\area(J(Q_\alpha))=0$ if and only if $\area(J(P_\alpha))=0$.
If $\alpha\in\PZ$, then each connected component of the boundary of the Herman ring of $Q_\alpha$ is a Jordan curve passing through exactly one critical point and $\area(J(Q_\alpha))=0$.
 \end{cor}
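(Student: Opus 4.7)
The plan is to deduce both assertions directly from Lemma \ref{lem:zero-area}, Theorem \ref{thm-PZ}, and the construction in Lemma \ref{lem:surgery-S-to-H}, using only two elementary facts: quasiconformal maps are absolutely continuous (so they preserve sets of area zero in both directions) and countable unions of null sets are null.

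First, I would write $J(Q_\alpha)=\MJ_\alpha^1\sqcup\MJ_\alpha^2$ as in \eqref{equ:J-Q-alplha-decom}. By Lemma \ref{lem:zero-area}(c), $\MJ_\alpha^2$ has absolute area zero, so $\area(J(Q_\alpha))=\area(\MJ_\alpha^1)$. Next, note that by \eqref{equ:phi-minus-alpha} we have $P_{-\alpha}(z)=\overline{P_\alpha(\overline z)}$, so $J(P_{-\alpha})$ is the complex conjugate of $J(P_\alpha)$ and in particular $\area(J(P_\alpha))=0$ iff $\area(J(P_{-\alpha}))=0$. By Lemma \ref{lem:zero-area}(b), every component of $\MJ_\alpha^1$ is quasiconformally homeomorphic to $J(P_\alpha)$ or $J(P_{-\alpha})$, and there are only countably many such components since they lie in $\bigcup_{m\geqslant 0}Q_\alpha^{-m}(J_\alpha^\infty\cup J_\alpha^0)$. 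Hence $\area(J(Q_\alpha))=0$ iff every such component has area zero iff $\area(J(P_\alpha))=0$. This proves the first assertion.

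For the second assertion, assume $\alpha\in\PZ$. Since $\PZ$ is invariant under $\alpha\mapsto-\alpha$, Theorem \ref{thm-PZ} applies simultaneously to $P_\alpha$ and $P_{-\alpha}$, giving $\area(J(P_\alpha))=0$ and Jordan curves $\partial\Delta_\alpha$, $\partial\Delta_{-\alpha}$ passing through the critical points $\omega_\alpha$ and $\omega_{-\alpha}$ respectively. By the first part, $\area(J(Q_\alpha))=0$. The Herman ring $A_\alpha=\Phi_\alpha(A_\alpha')$ from the proof of Lemma \ref{lem:surgery-S-to-H} has boundary $\partial A_\alpha'=\partial\Delta_\alpha\cup\eta_r(\partial\Delta_{-\alpha})$, so $\partial A_\alpha=\Phi_\alpha(\partial\Delta_\alpha)\cup\Phi_\alpha(\eta_r(\partial\Delta_{-\alpha}))$. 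Each of $\eta_r$ and $\Phi_\alpha$ is a homeomorphism (in fact, $\eta_r$ is a M\"obius map and $\Phi_\alpha$ is quasiconformal), so each boundary component is a Jordan curve. Using the normalization $\Phi_\alpha(\omega_\alpha)=1$ and $\Phi_\alpha(\eta_r(\omega_{-\alpha}))=c$ from Lemma \ref{lem:surgery-S-to-H}, the outer component passes through the critical point $1$ and the inner one through the critical point $c$.

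The only potential subtlety I would check is that $1$ and $c$ are the \emph{only} critical points on these two boundary curves. This is because the four critical points of $Q_\alpha$ are $\{0,1,\infty,c\}$; the points $0$ and $\infty$ lie in the super-attracting basins $B_\alpha^0\subset A_\alpha^\Int$ and $B_\alpha^\infty\subset A_\alpha^\Ext$ (so are not in $\partial A_\alpha$), while $\Phi_\alpha$ conjugates $Q_\alpha|_{\partial A_\alpha}$ to $P_{\pm\alpha}|_{\partial\Delta_{\pm\alpha}}$, on which the unique critical points are $\omega_\alpha$ and $\omega_{-\alpha}$. Hence each boundary component contains exactly one critical point, completing the proof.
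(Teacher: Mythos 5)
Your proof is correct and follows the route the paper intends: the paper gives no explicit argument, only the remark that the corollary follows by ``combining Lemma \ref{lem:zero-area} and Theorem \ref{thm-PZ},'' and your write-up supplies exactly the details that this remark leaves implicit — the decomposition $J(Q_\alpha)=\MJ_\alpha^1\sqcup\MJ_\alpha^2$, the absolute continuity of quasiconformal maps in both directions of the ``iff,'' and the identification of the two boundary components of $A_\alpha$ with $\Phi_\alpha(\partial\Delta_\alpha)$ and $\Phi_\alpha(\eta_r(\partial\Delta_{-\alpha}))$ together with Theorem \ref{thm-PZ}. One cosmetic point: your justification that $\MJ_\alpha^1$ has countably many components (``since they lie in $\bigcup_{m}Q_\alpha^{-m}(\cdots)$'') would be cleaner phrased as: each set $Q_\alpha^{-m}(J_\alpha^\infty\cup J_\alpha^0)$ has finitely many components because $Q_\alpha$ has finite degree, and a countable union of finite families is countable.
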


\begin{proof}
The ``if" part of the first statement and the second statement follow immediately from Lemma \ref{lem:zero-area} and Theorem \ref{thm-PZ}.
To prove the ``only if" part of the first statement, we use a standard Herman-to-Siegel quasiconformal surgery (which is the inverse process of Siegel-to-Herman surgery, see \cite{Shi87} or \cite[\S 5]{Shi06}).
Let $\gamma$ be an invariant analytic curve in the Herman ring $A_\alpha$ of $Q_\alpha$.
Then there exists an real-analytic map $\xi_\alpha:\T\to\gamma$ which conjugates the irrational rotation $\zeta\mapsto e^{2\pi\ii\alpha} \zeta$ to $Q_\alpha: \gamma\to \gamma$.
Hence $\xi_\alpha^{-1}:\gamma\to\T$ can be extended continuously to $\Theta_\alpha:\overline{\gamma^{\Int}}\to\overline{\D}$ such that $\Theta_\alpha: \gamma^{\Int}\to \D$ is quasiconformal (see \cite[Proposition 2.30(a)]{BF14a}). Define
\begin{equation}
G_\alpha(z):=
\left\{
\begin{array}{ll}
Q_{\alpha}(z)  &~~~~~~~\text{if}~z\in \overline{\gamma^{\Ext}}, \\
\Theta_\alpha^{-1}\big(e^{2\pi\ii\alpha}\Theta_\alpha(z)\big) &~~~~~~\text{if}~z\in\gamma^{\Int}.
\end{array}
\right.
\end{equation}
Then $G_\alpha:\EC\to\EC$ is a quasi-regular map of degree two.
Let $\sigma:=\Theta^*(\sigma_0)$, where $\sigma_0$ is the standard complex structure in $\D$.
We pull back $\sigma$ from $\gamma^{\Int}$ to $Q_\alpha^{-k}\big(\gamma^{\Int}\big)$ by defining $\sigma:=(Q_\alpha^{\circ k})^*(\sigma)$ for all $k\geqslant 1$. In the rest place we define $\sigma:=\sigma_0$. Since $\sigma_0$ is invariant under $\zeta\mapsto e^{2\pi\ii\alpha} \zeta$, it follows that $\sigma$ is invariant under $G_\alpha:\EC\to\EC$.
Then there exists a quasiconformal mapping $\Upsilon_\alpha:\EC\to\EC$ integrating $\sigma$ such that $\Upsilon_\alpha\circ G_\alpha\circ\Upsilon_\alpha^{-1}$ is the quadratic polynomial $P_{\alpha}$ and $J(P_\alpha)=\Upsilon_\alpha(J_\alpha^\infty)$, where $J_\alpha^\infty$ is the immediate super-attracting basin of $Q_\alpha$ containing $\infty$. If $\area(J(Q_\alpha))=0$, then $\area(J_\alpha^\infty)=0$ and hence $\area(J(P_\alpha))=0$ since $\Upsilon_\alpha$ is absolutely continuous with respect to $2$-dimensional Lebesgue measure.
 \end{proof}

\begin{rmk}
In fact, in subsequent proofs, bounded type rotation numbers are sufficient for our purpose. We consider the Petersen-Zakeri type so that some results can be applied to more general cases.
\end{rmk}

\subsection{Rigidity of the surgery} \label{subsec:rigidity}

For an annulus $A$ in $\C$, we use $\partial_+A$ and $\partial_-A$ to denote the exterior and interior boundary components of $A$ in $A^{\Ext}$ and $A^{\Int}$ respectively.
Assume that $\partial_+A$ and $\partial_-A$ are Jordan curves containing two marked points $c_+$ and $c_-$ respectively. Then there exists a unique homeomorphism
\begin{equation}
\varphi: \overline{A}\to\overline{\A}_{r,1}=\{z\in\C: r\leqslant |z|\leqslant 1\}
\end{equation}
such that $\varphi:A\to\A_{r,1}$ is conformal and $\varphi(c_+)=1$, where $r\in(0,1)$. We call
\begin{equation}
\theta:=\arg\varphi(c_-)\in \R/(2\pi\Z)
\end{equation}
the \textit{conformal angle} between $c_+$ and $c_-$ (with respect to $A$). The real-analytic curve $\varphi^{-1}(\T_{\sqrt{r}})$ is called the \textit{core curve} of $A$.

Recall that $\psi_{\alpha,+}(\zeta)=\phi_\alpha\big(r e^{\ii\theta_1}\zeta\big)$ and $\psi_{\alpha,-}(\zeta)= \eta_r\circ\phi_{-\alpha}\Big(\tfrac{r}{e^{\ii\theta_2}\zeta}\Big)$ are the conformal maps defined in \eqref{equ:psi-alpha-pm}, where $r\in(0,r_\alpha)$ and $r_\alpha>0$ is the conformal radius of the Siegel disk $\Delta_\alpha$ of $P_\alpha$. The numbers $\theta_1,\theta_2\in\R$ determine the conformal angle as following.

\begin{lem}\label{lem:conformal-angle}
Let $\alpha\in\PZ$. Then the conformal angle between the two critical points $1$ and $c$ with respect to the Herman ring $A_\alpha$ of $Q_\alpha$ is $\theta_1-\theta_2$.
\end{lem}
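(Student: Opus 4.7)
The plan is to leverage the explicit conformal uniformization of $A_\alpha$ that already comes for free from the surgery. By \eqref{equ:Psi-Phi}, the composition $\Phi_\alpha\circ\Psi_\alpha:\A_{r/r_\alpha,r_\alpha/r}\to A_\alpha$ is itself a conformal isomorphism, sending the outer circle $\T_{r_\alpha/r}$ to $\partial_+A_\alpha=\Phi_\alpha(\partial\Delta_\alpha)$ and the inner circle $\T_{r/r_\alpha}$ to $\partial_-A_\alpha=\Phi_\alpha(\eta_r(\partial\Delta_{-\alpha}))$. To turn this into the normalized uniformization $\varphi:\overline{A_\alpha}\to\overline{\A}_{\rho,1}$ (with $\rho=(r/r_\alpha)^2$, matching the modulus in Lemma \ref{lem:surgery-S-to-H}) satisfying $\varphi(1)=1$, I would postcompose with the unique rescaling-rotation $H(\zeta)=(r/r_\alpha)\,e^{\ii\gamma}\zeta:\A_{r/r_\alpha,r_\alpha/r}\to\A_{\rho,1}$ and set $\varphi:=H\circ(\Phi_\alpha\circ\Psi_\alpha)^{-1}$, with $\gamma$ to be fixed below. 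The conformal angle will then be $\arg H(\zeta_0')$, where $\zeta_0'\in\T_{r/r_\alpha}$ is the preimage of $c=\Phi_\alpha(\eta_r(\omega_{-\alpha}))$ under $\Phi_\alpha\circ\Psi_\alpha$.

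Next I would compute the boundary preimages from the explicit formulas \eqref{equ:psi-alpha-pm}. Because $\alpha\in\PZ$, Theorem \ref{thm-PZ} applied to both $P_\alpha$ and $P_{-\alpha}$ (using \eqref{equ:phi-minus-alpha}) guarantees that $\partial\Delta_\alpha$ and $\partial\Delta_{-\alpha}$ are Jordan curves, so Carathéodory extends $\phi_{\pm\alpha}$ to homeomorphisms of the closed disks. Write $\phi_\alpha^{-1}(\omega_\alpha)=r_\alpha e^{\ii\beta_+}$ and $\phi_{-\alpha}^{-1}(\omega_{-\alpha})=r_\alpha e^{\ii\beta_-}$. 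Solving $\psi_{\alpha,+}(\zeta_0)=\omega_\alpha$ and $\psi_{\alpha,-}(\zeta_0')=\eta_r(\omega_{-\alpha})$ directly from \eqref{equ:psi-alpha-pm} gives
$$\zeta_0=\tfrac{r_\alpha}{r}e^{\ii(\beta_+-\theta_1)},\qquad \zeta_0'=\tfrac{r}{r_\alpha}e^{-\ii(\theta_2+\beta_-)}.$$
Requiring $H(\zeta_0)=1$ forces $\gamma=\theta_1-\beta_+$, and then a single line of algebra yields $H(\zeta_0')=\rho\,e^{\ii(\theta_1-\theta_2-\beta_+-\beta_-)}$, so the conformal angle equals $\theta_1-\theta_2-(\beta_++\beta_-)$ modulo $2\pi$.

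The only genuine step is to show that the spurious term $\beta_++\beta_-$ vanishes modulo $2\pi$, and the plan here is to exploit the conjugation symmetry \eqref{equ:phi-minus-alpha}, i.e.\ $\phi_{-\alpha}(\zeta)=\overline{\phi_\alpha(\overline{\zeta})}$. Evaluating at $\zeta=r_\alpha e^{\ii\beta_-}$ and using $\overline{\omega_{-\alpha}}=\omega_\alpha$ (since $\omega_{\pm\alpha}=-e^{\pm2\pi\ii\alpha}/2$) forces $\phi_\alpha(r_\alpha e^{-\ii\beta_-})=\omega_\alpha=\phi_\alpha(r_\alpha e^{\ii\beta_+})$; the injectivity of the boundary extension (again the Petersen--Zakeri hypothesis) delivers $\beta_+\equiv-\beta_-\pmod{2\pi}$, and the conclusion $\theta_1-\theta_2$ drops out. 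I do not foresee any serious difficulty beyond careful bookkeeping of orientations, in particular the orientation-reversing factor $1/\zeta$ inside $\psi_{\alpha,-}$; the Petersen--Zakeri condition enters only to guarantee that $\omega_\alpha$ and $\omega_{-\alpha}$ correspond to well-defined boundary points on $\T_{r_\alpha}$.
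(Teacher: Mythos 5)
Your proof is correct and follows essentially the same route as the paper: you trace the critical points back through $\Phi_\alpha\circ\Psi_\alpha$ to the model annulus $\A_{r/r_\alpha,\,r_\alpha/r}$, read off the angular positions from the explicit formulas \eqref{equ:psi-alpha-pm}, and kill the residual boundary angle using the conjugation symmetry $\phi_{-\alpha}(\zeta)=\overline{\phi_\alpha(\overline{\zeta})}$. The only cosmetic difference is that the paper works with a single angle $\theta_\alpha$ (so the relation $\phi_{-\alpha}(r_\alpha e^{-\ii\theta_\alpha})=\omega_{-\alpha}$ is immediate from \eqref{equ:phi-minus-alpha}), whereas you introduce two angles $\beta_\pm$ and then prove $\beta_++\beta_-\equiv 0$ at the end --- same content, reordered bookkeeping. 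One small slip in your closing remark: $\zeta\mapsto 1/\zeta$ is holomorphic, hence orientation-preserving, so there is no orientation-reversing factor to worry about inside $\psi_{\alpha,-}$.
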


\begin{proof}
By Lemma \ref{lem:surgery-S-to-H} and Corollary \ref{cor:zero-area}, $\partial A_\alpha$ consists of two Jordan curves with $1\in\partial_+A_\alpha$ and $c\in\partial_-A_\alpha$.
Let $\phi_\alpha:\D_{r_\alpha}\to\Delta_\alpha$ be the conformal map satisfying $\phi_\alpha(0)=0$ and $\phi_\alpha'(0)=1$. Then $\phi_\alpha$ can be extended to a homeomorphism $\phi_\alpha:\overline{\D}_{r_\alpha}\to\overline{\Delta}_\alpha$ and there exists a unique $\theta_\alpha\in\R/(2\pi\Z)$ such that $\phi_\alpha(r_\alpha e^{\ii\theta_\alpha} )=\omega_\alpha$, where $\omega_\alpha=-e^{2\pi\ii\alpha}/2\in\partial\Delta_\alpha$ is the critical point of $P_\alpha$. Thus we have
\begin{equation}\label{equ:crit-1-position}
\psi_{\alpha,+}\big(\tfrac{r_\alpha}{r}e^{\ii(\theta_\alpha-\theta_1)}\big)=\phi_\alpha(r_\alpha e^{\ii\theta_\alpha})=\omega_\alpha.
\end{equation}

By \eqref{equ:phi-minus-alpha}, we have $\phi_{-\alpha}(r_\alpha e^{-\ii\theta_\alpha})=\omega_{-\alpha}$. Hence
\begin{equation}\label{equ:crit-c-position}
\eta_r^{-1}\circ\psi_{\alpha,-}\big(\tfrac{r}{r_\alpha}e^{\ii(\theta_\alpha-\theta_2)}\big)=\phi_{-\alpha}(r_\alpha e^{-\ii\theta_\alpha})=\omega_{-\alpha}.
\end{equation}
Let $\Phi_\alpha:\EC\to\EC$ be the quasiconformal mapping in Lemma \ref{lem:surgery-S-to-H} satisfying $\Phi_\alpha(\omega_\alpha)=1$ and $\Phi_\alpha(\eta_r(\omega_{-\alpha}))=c$ (see the last paragraph in the proof). By \eqref{equ:Psi}, \eqref{equ:Psi-Phi}, \eqref{equ:crit-1-position} and \eqref{equ:crit-c-position}, the conformal angle between the two critical points $1$ and $c$ with respect to the Herman ring $A_\alpha$ of $Q_\alpha$ is $\theta_1 -\theta_2$.
\end{proof}

\begin{lem}\label{lem:rigidity}
Let $\alpha\in\PZ$, $0<r<r_\alpha$ and $\theta\in[0,2\pi)$. Then there exist unique parameters $a=a(\alpha,r,\theta)\in\C\setminus\big\{0,1,\frac{3}{2},2,3\big\}$ and $u=u(\alpha,r,\theta)\in\C\setminus\{0\}$ such that
\begin{equation}\label{equ:Q-a-u}
Q_{a,u}(z):=u z^2\frac{z-a}{1-\tfrac{2a-3}{a-2} z}
\end{equation}
has a fixed Herman ring $A$ in $\C$ satisfying the following properties:
\begin{itemize}
\item $Q_{a,u}:A\to A$ has rotation number $\alpha$;
\item $\Mod(A)=\frac{1}{\pi}\log\frac{r_\alpha}{r}$; and
\item $\partial_+A$ and $\partial_-A$ are Jordan curves passing through the critical points $1$ and $c=\frac{a(a-2)}{2a-3}$ respectively, and the conformal angle between $1$ and $c$ is $\theta$.
\end{itemize}
\end{lem}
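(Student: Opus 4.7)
\textit{Existence.} Choose any $r'\in(r,r_\alpha)$ and real numbers $\theta_1,\theta_2$ with $\theta_1-\theta_2\equiv\theta\pmod{2\pi}$, and apply Lemma~\ref{lem:surgery-S-to-H}. The resulting cubic rational map is of the form \eqref{equ:Q-a-u} and admits a fixed Herman ring of rotation number $\alpha$ and modulus $\tfrac{1}{\pi}\log(r_\alpha/r)$; by Corollary~\ref{cor:zero-area} (using $\alpha\in\PZ$) its two boundary components are Jordan curves through $1$ and $c$, and by Lemma~\ref{lem:conformal-angle} the conformal angle between $1$ and $c$ equals $\theta_1-\theta_2=\theta$. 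This yields valid parameters $a,u$.

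\textit{Uniqueness via a quasiconformal conjugacy.} Suppose $Q_j:=Q_{a_j,u_j}$ ($j=1,2$) both satisfy the three properties. Since the formula \eqref{equ:Q-a-u} recovers $(a,u)$ algebraically from $Q_{a,u}$ ($a$ is the unique nonzero finite zero and $u$ is read off from the leading behaviour at $\infty$), it suffices to produce a M\"obius transformation $M$ with $M(0)=0$, $M(\infty)=\infty$, $M(1)=1$ conjugating $Q_1$ to $Q_2$; three fixed points then force $M=\id$. The plan is to construct a quasiconformal conjugacy $\Phi:\EC\to\EC$ with these three normalizations, and then promote it to conformality: since $\area(J(Q_j))=0$ by Corollary~\ref{cor:zero-area} and $\Phi$ will be conformal on each Fatou component by construction, $\overline{\partial}\Phi=0$ almost everywhere, so $\Phi$ is M\"obius. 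The conjugacy is built in three parts: on the Herman rings, take the unique conformal isomorphism $h_A:A_1\to A_2$ determined by the equal moduli and the conformal-angle hypothesis with $h_A(1)=1$ and $h_A(c_1)=c_2$ (the homeomorphic extension to $\overline{A_1}\to\overline{A_2}$ exists by Corollary~\ref{cor:zero-area}), which automatically intertwines the rigid rotations $Q_j|_{A_j}$; on the immediate super-attracting basins $B_j^\infty$ and $B_j^0$, use the B\"ottcher coordinates at the fixed points $\infty$ and $0$ (both of local degree $2$, hence unique) to obtain conformal conjugacies, normalized by matching the boundary values with those of $h_A$ at the critical points on $\partial A_j$; finally, lift through $Q_j$ using \eqref{equ:A-alpha-pm}--\eqref{equ:Q-alpha-pm} and Lemma~\ref{lem:zero-area}(a) to extend $\Phi$ equivariantly and conformally to every Fatou component.

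\textit{Main obstacle.} The delicate step is gluing these local conformal pieces into a global homeomorphism of $\EC$, which reduces to extending $\Phi$ continuously across $J(Q_j)$ and checking boundary compatibility on shared Julia arcs. Each component of $\MJ_\alpha^1$ is quasiconformally homeomorphic to $J(P_{\pm\alpha})$ by Lemma~\ref{lem:zero-area}(b), a locally connected continuum by Theorem~\ref{thm-PZ}, so the conformal pieces of $\Phi$ on Fatou components adjacent to such a component extend continuously across it; for the buried components comprising $\MJ_\alpha^2$, the nested-annulus structure in the proof of Lemma~\ref{lem:zero-area}(c), where every annulus at every level has modulus $\Mod(A_j)=\tfrac{1}{\pi}\log(r_\alpha/r)$, forces each buried point to be the intersection of a nested sequence of annuli whose diameters shrink to zero, giving a unique continuous extension. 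Compatibility of boundary values on shared Jordan arcs---most critically on $\partial A_j$ (shared by $A_j$, $B_j^\infty$, $B_j^0$ and adjacent preimages of $A_j$) and its iterated preimages---follows from the uniqueness of orientation-preserving conjugations between irrational rotations once the marked critical boundary points are matched, and this matching is then propagated throughout the grand orbit by the equivariance $\Phi\circ Q_1=Q_2\circ\Phi$.
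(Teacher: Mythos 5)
Your \emph{existence} argument mirrors the paper's, and your overall goal — build a homeomorphism $\Phi:\EC\to\EC$ conjugating $Q_1$ to $Q_2$, conformal on every Fatou component, then deduce it is M\"obius — is the right idea. But there are two genuine gaps in the uniqueness step, both of which the paper sidesteps with a different construction.

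First, you assemble $\Phi$ from \emph{independently defined} conformal pieces: the annulus isomorphism $h_A:A_1\to A_2$, the B\"ottcher coordinates on the two basins, and lifts. The boundary extensions of these pieces to $\partial_+A_j$ (which lies in $J_j^\infty=\partial B_j^\infty$) must agree pointwise for the glued $\Phi$ to be continuous, let alone a homeomorphism. Your resolution via ``uniqueness of orientation-preserving conjugations between irrational rotations'' addresses only the $h_A$-side; on the B\"ottcher side, the extension is a Carath\'eodory boundary map of the degree-$2$ basin dynamics on $J_j^\infty$ (not an irrational rotation), and the matching of the two extensions — including the claim that both send $1\mapsto 1$ and $c_1\mapsto c_2$ — is exactly the nontrivial content. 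The paper avoids this by invoking the Herman-to-Siegel surgery to produce a \emph{single} quasiconformal map $\psi_j^+:\overline{\gamma_j^{\Ext}}\to\overline{\gamma_\alpha^{\Ext}}$ which is conjugating and conformal on $B_j^\infty\cup(A_j\setminus\overline{\gamma_j^{\Int}})$ simultaneously; the gluing is then built into the surgery rather than something to be verified after the fact.

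Second, even granting a global homeomorphism $\Phi$ conformal on $F(Q_1)$, the step ``$\area(J(Q_1))=0$ and $\overline\partial\Phi=0$ a.e., hence $\Phi$ is M\"obius'' is not valid as stated. A homeomorphism of $\EC$ conformal off a set of Lebesgue measure zero need not be conformal; one needs either an a priori bound showing $\Phi$ is quasiconformal, or a conformal-removability property of $J(Q_1)$ (which is strictly stronger than area zero and not established here). The paper gets quasiconformality for free: $\varphi_0$ is quasiconformal because the $\psi_j^\pm$ are, the lifts $\varphi_n$ satisfy $K(\varphi_n)=K(\varphi_0)$ since $Q_1,Q_2$ are holomorphic, and normality of $\{\varphi_n\}$ yields a limit $\varphi$ that is quasiconformal with dilatation $\leqslant K(\varphi_0)$; only then does ``quasiconformal and conformal a.e.\ implies conformal'' apply. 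To repair your argument you would need to control the dilatation of your glued $\Phi$, and the cleanest way to do that is precisely the Herman-to-Siegel surgery plus lifting scheme that the paper uses.
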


\begin{proof}
The parameters $a$ and $u$ satisfying the properties in this lemma exist. Indeed, for given $\alpha\in\PZ$, $0<r<r_\alpha$ and $\theta\in[0,2\pi)$, by the Siegel-to-Herman surgery in \S\ref{subsec:S-to-H}, Lemma \ref{lem:surgery-S-to-H} guarantees that $A$ has rotation number $\alpha$ and modulus $\frac{1}{\pi}\log\frac{r_\alpha}{r}$. Corollary \ref{cor:zero-area} and Lemma \ref{lem:conformal-angle} guarantee that each component of $\partial A$ is a Jordan curve passing through a critical point, and the conformal angle between $1$ and $c$ can be adjusted to be $\theta$ (i.e., set $\theta=\theta_1-\theta_2$). It remains to prove the uniqueness of $a$ and $u$.

Suppose that $Q_{a_1,u_1}$ and $Q_{a_2,u_2}$ satisfy all the properties of this lemma. In the following we prove that $Q_{a_1,u_1}:\EC\to\EC$ is conjugate to $Q_{a_2,u_2}:\EC\to\EC$ by a conformal map. The strategy is to use the dynamics to lift quasiconformal mappings which are conformal in the periodic Fatou components. One can obtain a sequence of quasiconformal mappings which are ``more and more" conformal and the limit is quasiconformal in $\EC$ and conformal in the Fatou set. Since the Julia sets have area zero, then the limit is conformal everywhere. In fact, such an idea is very standard in the proof of rigidity.

\medskip
Let us give the details of the proof. For simplicity, we use the subscript ``$i$" to denote the objects corresponding to $Q_i:=Q_{a_i,u_i}$ as in \S\ref{subsec:zero-area} for $i=1,2$. For example, the Herman ring for $Q_{i}$ is denoted by $A_{i}$ whose boundary components $\partial_+A_i$ and $\partial_-A_i$ contain the critical points $1$ and $c_i$ respectively, the immediate super-attracting basin of $\infty$ (resp. $0$) of $Q_{i}$ is denoted by $B_{i}^\infty$ (resp. $B_{i}^0$) etc.
Let $\gamma_{i}$ be the core curve of $A_{i}$. Then $\gamma_{i}$ is a real-analytic Jordan curve in $A_{i}$ satisfying\footnote{Note that $\gamma_{i}^{\Int}$ is an open disk while $A_{i}^{\Int}$ is a closed connected set.}
\begin{equation}
\Mod\big(A_{i}\setminus\overline{\gamma_{i}^{\Int}}\big)=\Mod\big(\gamma_{i}^{\Int}\setminus A_{i}^{\Int}\big)=\frac{1}{2\pi}\log\frac{r_\alpha}{r}.
\end{equation}
Since $\Mod(A_{1})=\Mod(A_{2})$, we use $\gamma_\alpha$ to denote the invariant real-analytic Jordan curve in the Siegel disk $\Delta_\alpha$ of $P_\alpha$ such that
\begin{equation}
\Mod\big(A_{1}\setminus\overline{\gamma_{1}^{\Int}}\big)=\Mod\big(A_{2}\setminus\overline{\gamma_{2}^{\Int}}\big)
=\Mod\big(\Delta_\alpha\setminus\overline{\gamma_{\alpha}^{\Int}}\big)=\frac{1}{2\pi}\log\frac{r_\alpha}{r}.
\end{equation}
Let $\widehat{B}_\alpha(\infty)$ be the super-attracting basin of $\infty$ of $P_\alpha$.

\medskip
\textbf{Step 1} (Definition of $\varphi_0$).
By a standard Herman-to-Siegel quasiconformal surgery (see the proof of Corollary \ref{cor:zero-area}), there exist two homeomorphisms $\psi_i^+:\overline{\gamma_{i}^{\Ext}}\to\overline{\gamma_\alpha^{\Ext}}$, where $i=1,2$, such that
\begin{itemize}
\item $\psi_i^+:\gamma_{i}^{\Ext}\to\gamma_\alpha^{\Ext}$ is quasiconformal, and the restriction $\psi_i^+:\Omega_{i}^+\to \Xi_{\alpha}^+$ is conformal, where
$\Omega_{i}^+:=B_{i}^\infty\cup\big(A_{i}\setminus\overline{\gamma_{i}^{\Int}}\big)$ and
$\Xi_{\alpha}^+:=\widehat{B}_\alpha(\infty)\cup \big(\Delta_\alpha\setminus\overline{\gamma_{\alpha}^{\Int}}\big)$;
\item $\psi_i^+:\overline{\Omega_{i}^+}\to \overline{\Xi_{\alpha}^+}$ is a conjugacy between $Q_{i}:\overline{\Omega_{i}^+}\to \overline{\Omega_{i}^+}$ and $P_\alpha:\overline{\Xi_{\alpha}^+}\to \overline{\Xi_{\alpha}^+}$.
\end{itemize}
Hence $\psi_i^+$ maps the critical point $1$ of $Q_{i}$ to the critical point $\omega_\alpha$ of $P_\alpha$.
Then \begin{equation}\label{equ:varphi-0-plus}
\varphi_0^+:=(\psi_2^+)^{-1}\circ\psi_1^+: \overline{\gamma_{1}^{\Ext}}\to \overline{\gamma_{2}^{\Ext}}
\end{equation}
is a homemorphism satisfying
\begin{itemize}
\item $\varphi_0^+: \gamma_{1}^{\Ext}\to \gamma_{2}^{\Ext}$ is quasiconformal, and the restriction $\varphi_0^+:\Omega_{1}^+\to \Omega_{2}^+$ is conformal;
\item $\varphi_0^+:\overline{\Omega_{1}^+}\to \overline{\Omega_{2}^+}$ is a conjugacy between $Q_{1}:\overline{\Omega_{1}^+}\to \overline{\Omega_{1}^+}$ and $Q_{2}:\overline{\Omega_{2}^+}\to \overline{\Omega_{2}^+}$, $\varphi_0^+(1)=1$ and $\varphi_0^+(\infty)=\infty$.
\end{itemize}

Similar to \eqref{equ:varphi-0-plus}, there exists a homeomorphism
$\varphi_0^-: \overline{\gamma_{1}^{\Int}}\to \overline{\gamma_{2}^{\Int}}$
such that
\begin{itemize}
\item $\varphi_0^-: \gamma_{1}^{\Int}\to \gamma_{2}^{\Int}$ is quasiconformal, and the restriction $\varphi_0^-:\Omega_{1}^-\to \Omega_{2}^-$ is conformal, where $\Omega_{i}^-:=B_{i}^0\cup\big(\gamma_{i}^{\Int}\setminus A_{i}^{\Int}\big)$;
\item $\varphi_0^-:\overline{\Omega_{1}^-}\to \overline{\Omega_{2}^-}$ is a conjugacy between $Q_{1}:\overline{\Omega_{1}^-}\to \overline{\Omega_{1}^-}$ and $Q_{2}:\overline{\Omega_{2}^-}\to \overline{\Omega_{2}^-}$, $\varphi_0^-(c_1)=c_2$ and $\varphi_0^-(0)=0$.
\end{itemize}

We claim that $\varphi_0^+|_{\gamma_{1}}=\varphi_0^-|_{\gamma_{1}}$. Indeed, since $\Mod(A_{1})=\Mod(A_{2})$, there exist $0<\rho<1$ and two homeomorphisms $\tau_i: \overline{A}_i\to\overline{\A}_{\rho,1}$, where $i=1,2$, such that $\tau_i: A_i\to \A_{\rho,1}$ is conformal and $\tau_1(1)=\tau_2(1)=1$. Since the conformal angle between $1$ and $c_1$ with respect to $A_{1}$ is equal to that between $1$ and $c_2$ with respect to $A_{2}$, we have $\tau_1(c_1)=\tau_2(c_2)\in\T_{\rho}$.
Note that $\gamma_i=\tau_i^{-1}(\T_{\sqrt{\rho}})$ for $i=1,2$.
Then $\tau_2\circ\varphi_0^+\circ\tau_1^{-1}:\overline{\A}_{\sqrt{\rho},1}\to \overline{\A}_{\sqrt{\rho},1}$ is a homeomorphism fixing $1$ and satisfying that $\tau_2\circ\varphi_0^+\circ\tau_1^{-1}:\A_{\sqrt{\rho},1}\to \A_{\sqrt{\rho},1}$ is conformal. Hence $\tau_2\circ\varphi_0^+\circ\tau_1^{-1}:\overline{\A}_{\sqrt{\rho},1}\to \overline{\A}_{\sqrt{\rho},1}$ is the identity and $\varphi_0^+=\tau_2^{-1}\circ\tau_1$ on $\overline{\gamma_1^{\Ext}}\cap A_1$.
By a similar argument, we conclude that $\tau_2\circ\varphi_0^-\circ\tau_1^{-1}:\overline{\A}_{\rho,\sqrt{\rho}}\to \overline{\A}_{\rho,\sqrt{\rho}}$ is the identity and $\varphi_0^-=\tau_2^{-1}\circ\tau_1$ on $\overline{\gamma_1^{\Int}}\cap A_1$.
This implies that $\varphi_0^+|_{\gamma_{1}}=\varphi_0^-|_{\gamma_{1}}$.
Hence the map
\begin{equation}
\varphi_0(z):=
\left\{
\begin{array}{ll}
\varphi_0^+(z)  &~~~~~~~\text{if}~ z\in \overline{\gamma_{1}^{\Ext}}, \\
\varphi_0^-(z)  &~~~~~~\text{if}~ z\in \gamma_{1}^{\Int}
\end{array}
\right.
\end{equation}
satisfies
\begin{itemize}
\item $\varphi_0: \EC\to\EC$ is quasiconformal, and the restriction $\varphi_0:\Omega_{1}\to \Omega_{2}$ is conformal, where
    \begin{equation}
    \Omega_{i}:=\Omega_{i}^+\cup \Omega_{i}^-=B_{i}^\infty\cup A_{i} \cup B_{i}^0;
    \end{equation}
\item $\varphi_0:\overline{\Omega}_{1}\to \overline{\Omega}_{2}$ is a conjugacy between $Q_{1}:\overline{\Omega}_{1}\to \overline{\Omega}_{1}$ and $Q_{2}:\overline{\Omega}_{2}\to \overline{\Omega}_{2}$, $\varphi_0(c_1)=c_2$, $\varphi_0(0)=0$, $\varphi_0(1)=1$ and $\varphi_0(\infty)=\infty$.
\end{itemize}

\medskip
\textbf{Step 2} (Definition of $\varphi_1$).
For $i=1,2$, we denote $\MU_i:=\MU_i^\infty\cup \MU_i^0$, where
\begin{equation}
\begin{split}
\MU_i^\infty:=&~\big\{U: U \text{ is a component of } \EC\setminus\overline{B_i^\infty\cup A_i\cup A_i^\Int}\big\} \text{\quad and} \\
\MU_i^0:=&~\big\{U: U \text{ is a component of } \EC\setminus\overline{B_i^0\cup A_i\cup A_i^\Ext}\big\}.
\end{split}
\end{equation}
Then $\MU_i$ consists of all components of $\EC\setminus\overline{\Omega}_i$. Note that they are not Fatou components. By Lemma \ref{lem:zero-area} and Theorem \ref{thm-PZ}, each component $U$ in $\MU_i$ is a Jordan disk.
Moreover, for each $U\in \MU_i^\infty$ (resp. $U\in \MU_i^0$), there exists a unique $n\geqslant 1$ such that
\begin{equation}\label{equ:U-A-i}
Q_i^{\circ n}: U\to A_i\cup A_i^\Int \quad (\text{resp. } Q_i^{\circ n}: U\to A_i\cup A_i^\Ext)
\end{equation}
is conformal, and $Q_i^{\circ k}(U)\in\MU_i^\infty$ (resp. $Q_i^{\circ k}(U)\in\MU_i^0$) for every $0\leqslant k\leqslant n-1$.

In \cite{PZ04} (see also \cite[\S 0]{Pet96}), Petersen and Zakeri assign a unique \emph{address} for each preimage of the Siegel disk $\Delta_\alpha$ under $P_\alpha$ as follows.
Let $V_0$ be the other component of $P_\alpha^{-1}(\Delta_\alpha)$ which is different from $\Delta_\alpha$. Denote the critical point $x_0:=\omega_\alpha=-e^{2\pi\ii\alpha}/2$. For each integer $s\geqslant 1$, there exist a unique $x_s\in\partial\Delta_\alpha$ and a Jordan domain $V_s$ attaching at $x_s$ such that $P_\alpha^{\circ s}:V_s\to V_0$ is conformal, $P_\alpha^{\circ s}(x_s)=x_0$ and $\overline{V}_s\cap\overline{\Delta}_\alpha=\{x_s\}$. For each $s\geqslant 1$, there exist a unique $y_s\in\partial V_0$ and a Jordan domain $W_s$ attaching at $y_s$ such that $P_\alpha:W_s\to V_{s-1}$ is conformal, $P_\alpha(y_s)=x_{s-1}$ and $\overline{W}_s\cap\overline{V}_0=\{y_s\}$.
Inductively, for $n\geqslant 2$ and $n$-tuple $(s_1,\ldots, s_n)$ of positive integers, there exist Jordan domains $V_{s_1,\ldots,s_n}$ and $W_{s_1,\ldots, s_n}$ attaching at the points $x_{s_1,\ldots,s_n}$ and $y_{s_1,\ldots,s_n}$ respectively, such that
$\overline{V}_{s_1,\ldots,s_n}\cap \overline{V}_{s_1,\ldots,s_{n-1}}=\{x_{s_1,\ldots,s_n}\}$ and $\overline{W}_{s_1,\ldots,s_n}\cap \overline{W}_{s_1,\ldots,s_{n-1}}=\{y_{s_1,\ldots,s_n}\}$;
\begin{equation}
P_\alpha(x_{s_1,\ldots,s_n})=P_\alpha(y_{s_1,\dots,s_n})=
\left\{
\begin{array}{ll}
x_{s_1-1,\ldots,s_n}  &~~~~~~~\text{if}~s_1\geqslant 2, \\
y_{s_2,\ldots,s_n} &~~~~~~\text{if}~s_1=1;
\end{array}
\right.
\end{equation}
and
\begin{equation}
P_\alpha(V_{s_1,\ldots,s_n})=P_\alpha(W_{s_1,\dots,s_n})=
\left\{
\begin{array}{ll}
V_{s_1-1,\ldots,s_n}  &~~~~~~~\text{if}~s_1\geqslant 2, \\
W_{s_2,\ldots,s_n} &~~~~~~\text{if}~s_1=1.
\end{array}
\right.
\end{equation}
If a preimage of the Siegel disk $\Delta_\alpha$ under $P_\alpha$ is $V_{s_1,\ldots,s_n}$ (resp. $W_{s_1,\ldots,s_n}$), we use $(s_1,\ldots,s_n)^+$ (resp. $(s_1,\ldots,s_n)^-$) to denote its address.
This assign a unique address for each preimage of the Siegel disk $\Delta_\alpha$ under $P_\alpha$.

\medskip
By the surgery construction (see the proof of Corollary \ref{cor:zero-area} and the properties of $\psi_i^+$ in Step 1), one can assign a unique address (i.e., $(s_1,\ldots,s_n)^{\pm}$) for each $U\in\MU_i^{\infty}$ by the one-to-one correspondence of the quasiconformal mapping $\psi_i^+$. Similarly, each $U\in\MU_i^0$ can be assigned a unique address (i.e., $(s_1,\ldots,s_n)^{\pm}$) which corresponds to a preimage of the Siegel disk $\Delta_{-\alpha}$ under $P_{-\alpha}$.

For any $U_1\in\MU_1^\infty$ (resp. $\MU_1^0$), there exists a unique $U_2\in\MU_2^\infty$ (resp. $\MU_2^0$) with the same address as $U_1$. We define
\begin{equation}
\varphi_1:=Q_{2}^{-1}\circ \varphi_0\circ Q_{1}:U_1\to U_2,
\end{equation}
where $Q_2^{-1}$ is the inverse of the conformal map $Q_2: U_2\to Q_2(U_2)$.
By Step 1, $\varphi_0:\overline{\Omega}_{1}\to \overline{\Omega}_{2}$ is a conjugacy between $Q_{1}$ and $Q_2$. Hence $\varphi_1:U_1\to U_2$ can be extended to a homeomorphism \begin{equation}
\varphi_1:\overline{U}_1\to \overline{U}_2, \text{\quad where \quad}\varphi_1|_{\partial U_1}=\varphi_0|_{\partial U_1}.
\end{equation}
For $z\in\overline{\Omega}_1=\overline{B_1^\infty\cup A_1 \cup B_1^0}$, we define $\varphi_1(z):=\varphi_0(z)$. Then $\varphi_1$ is defined on the whole Riemann sphere and
we have
\begin{equation}\label{equ:lift-0-1}
\varphi_0\circ Q_1(z)=Q_2\circ\varphi_1(z), \text{\quad for } z\in\EC.
\end{equation}

We claim that $\varphi_1:\EC\to\EC$ is an orientation-preserving homeomorphism. Indeed, by construction, $\varphi_1:\EC\to\EC$ is injective and surjective. It suffices to prove that $\varphi_1$ is continuous at any $z\in\EC$. This is obvious if $z$ lies in $\Omega_1$ or in any component of $\MU_1$. Hence we only need to consider $z\in\partial B_1^\infty\cup\partial B_1^0$.
Note that the restriction $\varphi_1=\varphi_0:\overline{B_1^\infty}\cup \overline{B_1^0}\to \overline{B_2^\infty}\cup \overline{B_2^0}$ is continuous.
By Theorem \ref{thm-PZ}, the spherical diameter of any sequence of different components $(U_1^{(n)})_{n\geqslant 0}$ in $\MU_1$ tends to zero as $n\to\infty$.
It follows that $\varphi_1:\EC\to\EC$ is continuous at $z\in\partial B_1^\infty\cup\partial B_1^0$. Hence $\varphi_1:\EC\to\EC$ is an orientation-preserving homeomorphism.
In particular, by \eqref{equ:lift-0-1}, $\varphi_1:\EC\to\EC$ is a quasiconformal mapping whose complex dilatation satisfies $K(\varphi_1)=K(\varphi_0)$ since $Q_1$ and $Q_2$ are analytic.

\medskip
\textbf{Step 3} (Inductive definition of $\varphi_n$).
Assume that for every $1\le k\le n$, we have a quasiconformal homeomorphism $\varphi_k :\EC \to \EC$ such that
\begin{itemize}
  \item $\varphi_k=\varphi_{k-1}$ in $Q_1^{-(k-1)}(\overline{\Omega}_1)$;
  \item $\varphi_k$ is conformal in every component of $Q_1^{-k}(\Omega_1)$; and
  \item $\varphi_{k-1}\circ Q_1=Q_2\circ\varphi_k$.
\end{itemize}

We define $\varphi_{n+1}$ similarly as $\varphi_1$. For any $U_1\in\MU_1^\infty$ (resp. $\MU_1^0$), there exists a unique $U_2\in\MU_2^\infty$ (resp. $\MU_2^0$) with the same address as $U_1$. Define
\begin{equation}
\varphi_{n+1}:=Q_{2}^{-1}\circ \varphi_n\circ Q_{1}:U_1\to U_2.
\end{equation}
Since $\varphi_n|_{\overline{\Omega}_1}=\varphi_0|_{\overline{\Omega}_1}$ and $\varphi_0:\overline{\Omega}_{1}\to \overline{\Omega}_{2}$ is a conjugacy between $Q_{1}$ and $Q_2$, it follows that $\varphi_{n+1}:U_1\to U_2$ can be extended to a homeomorphism
\begin{equation}
\varphi_{n+1}:\overline{U}_1\to \overline{U}_2, \text{\quad where \quad}\varphi_{n+1}|_{\partial U_1}=\varphi_{n}|_{\partial U_1}=\varphi_{0}|_{\partial U_1}.
\end{equation}
For $z\in\overline{\Omega}_1=\overline{B_1^\infty\cup A_1 \cup B_1^0}$, we define $\varphi_{n+1}(z):=\varphi_n(z)=\varphi_0(z)$. Then $\varphi_{n+1}$ is defined on the whole Riemann sphere and
\begin{itemize}
  \item $\varphi_{n+1}=\varphi_n$ in $Q_1^{-n}(\overline{\Omega}_1)$;
  \item $\varphi_{n+1}$ is conformal in every component of $Q_1^{-(n+1)}(\Omega_1)$; and
  \item $\varphi_n\circ Q_1=Q_2\circ\varphi_{n+1}$ and $\varphi_{n+1}:\EC\to\EC$ is a quasiconformal mapping satisfying $K(\varphi_{n+1})=K(\varphi_0)$.
\end{itemize}

\medskip
\textbf{Step 4} (The limit).
By induction, we have a sequence of quasiconformal homeomorphisms $\{\varphi_n: \EC\to\EC\}_{n\geqslant 0}$ such that each $\varphi_n$ is conformal in $Q_1^{-n}(\Omega_1)$ and its complex dilatation satisfies $K(\varphi_n)= K(\varphi_0)<+\infty$. Since $\varphi_n=\varphi_0$ in $\Omega_1$ for all $n\geqslant 1$, it follows that $\{\varphi_n\}_{n\geqslant 0}$ is locally uniformly bounded in $\C$ and hence it is a normal family.
We first take a convergent subsequence $\{\varphi_{n_k}\}_{k\geqslant 0}$ such that $\varphi_{n_k}\to\varphi$ as $k\to\infty$. Next we take a further convergent subsequence $\{\varphi_{n_{k_j}+1}\}_{j\geqslant 0}$  such that $\varphi_{n_{k_j}+1}\to\psi$ as $j\to\infty$. Then we obtain two limit quasiconformal mappings $\varphi$ and $\psi$, such that
\begin{itemize}
\item $\varphi$ and $\psi$ fix $0$, $1$ and $\infty$; and
\item $\varphi\circ Q_1=Q_2\circ\psi$ since $\varphi_n\circ Q_1=Q_2\circ\varphi_{n+1}$ for all $n\geqslant 0$.
\end{itemize}
Since $\varphi_{n+1}=\varphi_n$ in $Q_1^{-n}(\overline{\Omega}_1)$, it follows that $\varphi=\psi$ in $\bigcup_{n\geqslant 0}Q_1^{-n}(\Omega_1)$. By Lemma \ref{lem:zero-area}, we conclude that $\varphi=\psi$ in the Fatou set of $Q_1$. Hence $\varphi=\psi$ on $\EC$ because of the continuity. Then $Q_1$ and $Q_2$ are quasiconformally conjugate to each other on $\EC$ and they are conformally conjugate in the Fatou sets.

By Corollary \ref{cor:zero-area}, $J(Q_{1})$ has area zero. Hence $\varphi:\EC\to\EC$ is a quasiconformal mapping which is conformal almost everywhere. This implies that $\varphi:\EC\to\EC$ is conformal. Since $\varphi$ fixes $0$, $1$ and $\infty$, we conclude that $\varphi$ is the identity and hence $Q_{1}=Q_{2}$.
Thus we get $a_1=a_2$ and $u_1=u_2$.
\end{proof}

\begin{rmk}
In the proof of Lemma \ref{lem:rigidity}, the idea behinds Steps 2 to 4 is based on promoting a combinatorial equivalence to a global quasiconformal conjugacy, which is known as the ``Thurston algorithm". See \cite{DH93} and \cite[Appendix A]{McM98b}.
\end{rmk}

If $\alpha\in\PZ$, Lemma \ref{lem:rigidity} implies that $Q_\alpha:=Q_{\alpha,r,\theta}$ in Lemma \ref{lem:surgery-S-to-H} is \textit{independent} of the following:
\begin{itemize}
\item the interpolation $\psi_{\alpha,0}$ in \eqref{equ:psi-0}; and
\item the choice of $r'\in (r,r_\alpha)$ in Lemma \ref{lem:annulus}.
\end{itemize}
It depends \textit{only} on the parameters
\begin{equation}\label{equ:alpha-r-theta}
\alpha\in\PZ, \quad r\in(0,r_\alpha) \text{\quad and\quad} \theta\in\R/(2\pi\Z).
\end{equation}
In the remainder of the paper, by Lemma \ref{lem:conformal-angle} we take $\theta_1=0$ and $\theta_2=-\theta$ in \eqref{equ:psi-alpha-pm} and fix
\begin{equation}\label{equ:psi-pm-std}
\psi_{\alpha,+}(\zeta):=\phi_\alpha\big(r \zeta\big) \text{\quad and\quad}\psi_{\alpha,-}(\zeta):= \eta_r\circ\phi_{-\alpha}\Big(\tfrac{r e^{\ii\theta}}{\zeta}\Big).
\end{equation}
By Lemma \ref{lem:annulus}, the restriction of $\psi_{\alpha,+}$ in $\A_{1,r_\alpha/r}$ and the restriction of $\psi_{\alpha,-}$ in $\A_{r/r_\alpha,1}$ are conformal (note that $\A_{r/r_\alpha,r/r'}\subset\A_{r/r_\alpha,1}$ for any $r'\in(r,r_\alpha)$).

Under the assumption \eqref{equ:alpha-r-theta}, in the following we use both $Q_{a,u}$ and $Q_{\alpha,r,\theta}$ alternately, depending on the needs.

\section{Perturbation lemma and surgery's continuity}

In this section, we first state a perturbation lemma on the conformal radii of quadratic Siegel disks due to Avila-Buff-Ch\'eritat. After that we recall a result on the control of the loss of the area of quadratic filled-in Julia sets by Buff and Ch\'{e}ritat. Based on these results we prove the continuity of the Siegel-to-Herman surgery when the high type rotation numbers tend to the limit suitably.

\subsection{Siegel disks and perturbations}

Let $\alpha:=[a_0;a_1, a_2, \cdots, a_n,\cdots]$ be a Brjuno number and $q_n\geqslant 1$ be as defined in \eqref{equ:q-n}, where $n\geqslant 1$. Recall that $r_\alpha>0$ is the conformal radius of the Siegel disk $\Delta_\alpha$ of the quadratic polynomial $P_\alpha(z)=e^{2\pi\ii\alpha}z+z^2$.
Let $\phi_{\alpha}:\mathbb{D}_{r_{\alpha}}\to\Delta_{\alpha}$ be the unique conformal map with $\phi_\alpha(0)=0$ and $\phi_\alpha'(0)=1$ such that $\phi_{\alpha}\circ R_\alpha(\zeta)=P_{\alpha}\circ\phi_{\alpha}(\zeta)$ for all $\zeta\in\mathbb{D}_{r_{\alpha}}$.
We use $\lfloor x\rfloor$ to denote the integer part of a positive number $x$.

In the construction of smooth Siegel disks by Avila-Buff-Ch\'eritat, the first part of the following perturbation lemma plays a crucial role (see \cite[Main Lemma]{ABC04} or \cite{BC02}, \cite{Avi03}). The second part implies that these smooth Siegel disks are accumulated by repelling cycles (see \cite[Corollary 5, Definition 3, and \S 5]{ABC04}).

\begin{lem}[{Avila-Buff-Ch\'eritat}]\label{lem:ABC04}
For any Brjuno number $\alpha:=[a_0;a_1, a_2, \cdots$, $a_n,\cdots]$, any bounded type number $\beta:=[0;t_1,t_2,\cdots,t_n,\cdots]$ and any radius $r_0$ with $0<r_0<r_{\alpha}$, let
\begin{equation}\label{equ:alpha-n}
\alpha_n:=[a_0;a_1,a_2,\cdots,a_n, A_n,t_1,t_2,t_3,\cdots],
\end{equation}
where $A_n:=\lfloor ({r_\alpha}/{r_0})^{q_n}\rfloor$.
Then
\begin{enumerate}
\item $\alpha_n\to\alpha$ and $r_{\alpha_n}\to r_0$ as $n\to\infty$;
\item For any $\varepsilon>0$, if $n$ is large enough, then $P_{\alpha_n}$ has a repelling cycle which is $\varepsilon$-close to $\phi_{\alpha}(\T_{r_0})$ in the Hausdorff metric.
\end{enumerate}
\end{lem}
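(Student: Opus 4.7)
Part (a). The convergence $\alpha_n\to\alpha$ is immediate: $\alpha_n$ and $\alpha$ share their first $n$ partial quotients, so $|\alpha_n-\alpha|\leqslant 1/q_n^2\to 0$. The substantive content is $r_{\alpha_n}\to r_0$. My plan is to invoke the Yoccoz--Brjuno framework: define
\begin{equation*}
\Upsilon(\alpha):=\log r_\alpha + B(\alpha), \qquad B(\alpha):=\sum_{k\geqslant 1}\frac{\log q_{k+1}(\alpha)}{q_k(\alpha)}.
\end{equation*}
A theorem of Buff--Ch\'eritat asserts that $\Upsilon$ extends continuously to all of $\R\setminus\Q$, so $\Upsilon(\alpha_n)\to\Upsilon(\alpha)$, and the problem reduces to computing the limit of $B(\alpha_n)-B(\alpha)$. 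Since $q_k(\alpha_n)=q_k(\alpha)$ for $k\leqslant n$, the dominant change appears at index $k=n$:
\begin{equation*}
\frac{\log q_{n+1}(\alpha_n)}{q_n}=\frac{\log(A_n q_n+q_{n-1})}{q_n}\longrightarrow \log(r_\alpha/r_0),
\end{equation*}
because $A_n=\lfloor(r_\alpha/r_0)^{q_n}\rfloor$, while the corresponding term in $B(\alpha)$ vanishes in the limit (it is a Brjuno tail). The remaining tail $\sum_{k\geqslant n+1}\log q_{k+1}(\alpha_n)/q_k(\alpha_n)$ also vanishes, since $q_k(\alpha_n)\geqslant q_{n+1}(\alpha_n)\geqslant A_n\to\infty$ and $\beta$ is of bounded type (so the contribution is at most a uniform constant times $B(\beta)/q_{n+1}(\alpha_n)$). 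Subtracting yields $\log r_{\alpha_n}\to\log r_\alpha-\log(r_\alpha/r_0)=\log r_0$.

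Part (b). I would exploit the near-resonance produced by the enormous partial quotient $A_n$: since $|\alpha_n-p_n/q_n|$ is of order $1/(A_n q_n^2)$, the iterate $P_{\alpha_n}^{\circ q_n}$ acts near $0$ as a small perturbation of the $q_n$-th iterate of $P_{p_n/q_n}$, which has a parabolic fixed point with $q_n$-fold rotational structure. Classical perturbation theory of indifferent cycles (Douady, Yoccoz) combined with Buff--Ch\'eritat's near-parabolic renormalization machinery produces a repelling cycle of $P_{\alpha_n}$ of period $q_{n+1}(\alpha_n)=A_n q_n+q_{n-1}$ located on $\partial\Delta_{\alpha_n}$. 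Combining this with part (a), the linearizations $\phi_{\alpha_n}:\D_{r_{\alpha_n}}\to\Delta_{\alpha_n}$, normalized by $\phi_{\alpha_n}(0)=0$, $\phi_{\alpha_n}'(0)=1$, converge locally uniformly on compacts of $\D_{r_0}$ to $\phi_\alpha|_{\D_{r_0}}$ (since $P_{\alpha_n}\to P_\alpha$ and the normalization is preserved). Hence $\Delta_{\alpha_n}\to\phi_\alpha(\D_{r_0})$ in the Carath\'eodory kernel sense and $\partial\Delta_{\alpha_n}$ Hausdorff-accumulates on $\phi_\alpha(\T_{r_0})$, so the repelling cycle lies within $\varepsilon$ of $\phi_\alpha(\T_{r_0})$ for $n$ large.

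The main obstacles are precisely the two black boxes invoked above: the continuity of $\Upsilon$ at Brjuno numbers (the hardest technical input of Buff--Ch\'eritat's theory) and the localization of the small cycles exactly on $\partial\Delta_{\alpha_n}$ rather than merely ``somewhere nearby''. In practice these would be cited rather than reproved; the rest of the argument is bookkeeping with continued fractions and normal families.
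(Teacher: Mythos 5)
The paper does not prove Lemma~\ref{lem:ABC04} at all; it is cited directly from Avila--Buff--Ch\'eritat (the Main Lemma and Corollary~5 of \cite{ABC04}), with the added remark that the sequence can be taken of bounded type by choosing the tail $\beta$ as in \eqref{equ:alpha-n}. So there is no ``paper's own proof'' to compare against, and any sketch you give is really an outline of the argument in the cited reference.

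With that understood, your sketch of part~(a) is correct in outline but leans on a heavier black box than \cite{ABC04} itself does. The continuity of $\Upsilon(\alpha)=\log r_\alpha+B(\alpha)$ on all of $\R\setminus\Q$ is the 2006 Annals theorem of Buff--Ch\'eritat, which postdates \cite{ABC04}; the Main Lemma there is proved with a more elementary pair of one-sided estimates, roughly: a lower bound on $r_{\alpha_n}$ coming from a Yoccoz/Risler-type inequality together with the boundedness (not continuity) of $\Upsilon$, and an upper bound on $r_{\alpha_n}$ which actually \emph{follows from} part~(b), because a repelling cycle converging to $\phi_\alpha(\T_{r_0})$ caps the Siegel disk of $P_{\alpha_n}$ from outside. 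Your route, taking the continuity of $\Upsilon$ as given and computing the limit of $B(\alpha_n)-B(\alpha)$, is a valid and arguably cleaner derivation of the conclusion, and your bookkeeping at index $k=n$ and the bounded-type tail is sound. Just be aware that it decouples (a) from (b), whereas in \cite{ABC04} they are logically intertwined.

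On part~(b) there are two inaccuracies worth naming. First, the tool used in \cite{ABC04} is \emph{parabolic explosion} (Ch\'eritat's thesis, building on Yoccoz and Douady), not the Inou--Shishikura near-parabolic renormalization; the latter is a much heavier machine introduced for the positive-area problem and is not what produces these cycles. Second, you claim the repelling cycle of period $q_{n+1}(\alpha_n)=A_nq_n+q_{n-1}$ is located on $\partial\Delta_{\alpha_n}$. That is neither what Corollary~5 of \cite{ABC04} states nor what is needed: the statement is that the cycle converges (in Hausdorff distance) to $\phi_\alpha(\T_{r_0})$, and the cycle a priori lives near that circle but need not sit exactly on the boundary of the perturbed Siegel disk. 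Your final paragraph correctly identifies this as the place where one should cite rather than reprove; just cite the right tool.
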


The Main Lemma in \cite{ABC04} states that Lemma \ref{lem:ABC04} holds for Brjuno numbers $(\alpha_n)_{n\geqslant 1}$. But in fact, this sequence can be chosen to be of bounded type as \eqref{equ:alpha-n} (see \cite[\S 5]{ABC04} and \cite[Remark, p.\,10]{ABC04}).
Since $\overline{P_{-\alpha}(\overline{z})}=P_\alpha(z)$, Lemma \ref{lem:ABC04} holds for $P_{-\alpha}$ with the sequence $(-\alpha_n)_{n\geqslant 1}$.

\medskip
We shall use the following fact, which implies that the conformal radius of the Siegel disk of $P_\alpha$ varies upper semicontinuously (see \cite[p.\,4]{ABC04}):
For any Brjuno number $\alpha$, if $\alpha_n\to\alpha$ and $r_{\alpha_n} \geqslant \rho$ for all $n$, then
\begin{equation}\label{equ:conf-radius}
r_\alpha \geqslant \rho\text{\quad and\quad} \phi_{\alpha_n} \rightarrow \phi_{\alpha}
\end{equation}
uniformly on compact subsets of $\D_{\rho}$ as $n\to\infty$.
Indeed, the linearizing maps $\phi_{\alpha_n}|_{\D_{\rho}}$ are univalent functions satisfying $\phi_{\alpha_n}(0)=0$ and $\phi_{\alpha_n}^{\prime}(0)=1$. Thus they form a normal family by the Koebe distortion theorem. Passing to the limit in
\begin{equation}
\phi_{\alpha_n}(e^{2\pi\ii\alpha_n} \zeta)=P_{\alpha_n}(\phi_{\alpha_n}(\zeta)),
\end{equation}
we see that any subsequence limit of $(\phi_{\alpha_n})_{n \geqslant 1}$ linearizes $P_{\alpha}$ and thus coincides with $\phi_{\alpha}$ in $\D_{\rho}$ by the uniqueness of the linearizing map.

\medskip
For a given integer $N\geqslant 1$, we define the set of \textit{high type} numbers:
\begin{equation}
 \HT_N:=\big\{\alpha=[a_0;a_1,a_2,\cdots,a_n,\cdots]\in \R\setminus\Q ~|~ a_n\geqslant N \text{ for all } n\geqslant 1\big\}.
 \end{equation}
This set of irrational numbers was introduced in \cite{IS08} so that the \textit{near-parabolic renormalization} can be acted infinitely many times for $P_\alpha$ when $\alpha$ is of sufficiently high type (i.e., $N$ is sufficiently large). The related scheme is one of the key points for constructing quadratic Julia sets with positive area (see \cite{BC12} and \cite{AL22}).
For further developments in this direction, see \cite{Che13}, \cite{CC15}, \cite{AC18}, \cite{Che19}, \cite{Che22b} and the references therein.

\begin{defi}
For a Brjuno number $\alpha$ and $0<\rho \leqslant r_{\alpha}$, we denote by $\Delta_{\alpha}(\rho)$ the invariant sub-disk of $\Delta_\alpha$ with conformal radius $\rho$ and by $L_{\alpha}(\rho)$ the set of points in $K_\alpha$ whose orbits do not intersect $\Delta_{\alpha}(\rho)$, where $K_\alpha$ is the \textit{filled-in Julia set} of $P_\alpha$.
\end{defi}

Buff and Ch\'eritat proved the existence of quadratic Siegel polynomials having a Julia set of positive area, and the following result is crucial in that proof (see \cite[Proposition 19, p.\,733]{BC12}).

\begin{lem}[{Buff-Ch\'eritat}]\label{lem:ABC12}
For sufficiently large $N\geqslant 1$, let $\alpha:=[a_0;a_1, a_2, \cdots$, $a_n,\cdots]\in\HT_N$ be a Brjuno number. For any $0<\rho<r_0<r_{\alpha}$ and $n\geqslant 1$, let
\begin{equation}\label{equ:alpha-n-new}
\alpha_n:=[a_0;a_1,a_2,\cdots,a_n, A_n,N,N,N,\cdots],
\end{equation}
where $A_n:=\lfloor ({r_\alpha}/{r_0})^{q_n}\rfloor$. Then for any $\varepsilon>0$,  if $n$ is large, then\,\footnote{By Lemma \ref{lem:ABC04}(a), $r_{\alpha_n}>\rho$ for all large $n$.}
\begin{equation}
\area(L_{\alpha_n}(\rho))\geqslant (1-\varepsilon)\area(L_\alpha(\rho)).
\end{equation}
\end{lem}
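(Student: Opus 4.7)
The plan is to bound the lost set $L_\alpha(\rho)\setminus L_{\alpha_n}(\rho)$ by partitioning it according to the fate of $P_{\alpha_n}$-orbits. By Lemma~\ref{lem:ABC04}(a) we have $\alpha_n\to\alpha$ and $r_{\alpha_n}\to r_0>\rho$, so $\Delta_{\alpha_n}(\rho)$ is well defined for all large $n$. By \eqref{equ:phi-usc}, $\phi_{\alpha_n}\to\phi_\alpha$ uniformly on a neighborhood of $\overline{\D}_\rho$, and hence $\overline{\Delta_{\alpha_n}(\rho)}\to\overline{\Delta_\alpha(\rho)}$ in Hausdorff topology. These soft convergences are the easy ingredients; the rest of the argument must supply the uniform dynamical control under perturbation, which is where the high-type hypothesis enters.

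A point $z\in L_\alpha(\rho)\setminus L_{\alpha_n}(\rho)$ either escapes to $\infty$ under $P_{\alpha_n}$ or else remains bounded and eventually lands in $\Delta_{\alpha_n}(\rho)$. I would first handle the entry case by combining the Hausdorff convergence of the sub-disks with the uniform convergence $P_{\alpha_n}^{\circ j}\to P_\alpha^{\circ j}$ on compacta disjoint from the post-critical set. For $z\in L_\alpha(\rho)$ one has $P_\alpha^{\circ j}(z)\notin\overline{\Delta_\alpha(\rho)}$ for all $j\geqslant 0$, so for any fixed $m$ and any compact $K\subset L_\alpha(\rho)$ bounded away from the critical orbit, $P_{\alpha_n}^{\circ j}(K)\cap\overline{\Delta_{\alpha_n}(\rho)}=\emptyset$ for all $j\leqslant m$ once $n$ is large. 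A Fatou-lemma argument combined with exhaustion then reduces the entry part of the loss to the area of a shrinking neighborhood of the closure of the critical orbit, which is negligible.

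The genuine difficulty is controlling the escape set $(K_\alpha\setminus K_{\alpha_n})\cap L_\alpha(\rho)$ along the prescribed sequence. For this I would invoke the Inou--Shishikura near-parabolic renormalization scheme \cite{IS08}: since $\alpha\in\HT_N$ for $N$ large, the renormalization tower of $P_\alpha$ is defined to all depths and remains inside a compact class $\mathcal{F}$ of maps, yielding uniform a priori bounds on distortion, moduli, and the geometry of post-critical sets. The specific choice $A_n=\lfloor(r_\alpha/r_0)^{q_n}\rfloor$, together with the constant tail $[N,N,N,\ldots]$, is calibrated so that the $n$-th renormalization of $P_{\alpha_n}$ inherits a Siegel disk of conformal radius close to $r_0$; in other words, the renormalized dynamics at level $n$ faithfully reproduces the geometry of $P_\alpha$ near $\partial\Delta_\alpha(r_0)$. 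A one-level loss estimate (comparing the area lost at a single renormalization level with the corresponding quantity at the next) can then be transported through $n$ levels using the uniform bounded-distortion inherent to $\mathcal{F}$.

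The main obstacle is exactly this quantitative propagation: one must ensure that the escape area does not accumulate geometrically through the renormalization tower, and that the pulled-back estimates at depth $n$ give something of order $o(1)\cdot\area(L_\alpha(\rho))$ rather than a constant fraction. This is handled by combining the compactness of the Inou--Shishikura class (supplying a uniform multiplicative contraction of the error at each level) with the Brjuno-type summability guaranteed by the high-type tail $[N,N,N,\ldots]$, which controls the cumulative effect of perturbation errors. Putting the escape and entry estimates together yields $\area(L_{\alpha_n}(\rho))\geqslant(1-\varepsilon)\area(L_\alpha(\rho))$ for $n$ large, as claimed.
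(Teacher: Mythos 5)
The paper does not prove this lemma: it is quoted verbatim as Proposition~19 of \cite{BC12} and used as a black box. So there is no ``paper's own proof'' to compare against; the honest comparison is between your sketch and the argument of Buff--Ch\'eritat.

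Your sketch points at the right framework --- high type so that the Inou--Shishikura class is invariant under near-parabolic renormalization, the choice $A_n=\lfloor(r_\alpha/r_0)^{q_n}\rfloor$ tuned so that the limiting conformal radius is $r_0$, and a split of the lost set $L_\alpha(\rho)\setminus L_{\alpha_n}(\rho)$ into orbits that enter the sub-disk versus orbits that escape. But the heart of the matter is asserted rather than argued. Two concrete gaps. First, in the ``entry'' case your appeal to ``a Fatou-lemma argument combined with exhaustion'' away from the post-critical set does not address the delicate region near $\partial\Delta_\alpha(r_0)$; the whole difficulty of Buff--Ch\'eritat's Proposition~19 is that the perturbed dynamics near the boundary of the Siegel disk (and not just near the critical orbit) governs which points get captured, and this is precisely where the explosion/implosion of the Siegel disk as $r_{\alpha_n}\to r_0$ must be controlled via parabolic implosion and the geometry of the near-parabolic renormalization tower. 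Second, and more seriously, the claim that compactness of the Inou--Shishikura class ``supplies a uniform multiplicative contraction of the error at each level,'' combined with ``Brjuno-type summability from the tail $[N,N,N,\ldots]$,'' is not a mechanism --- it is a restatement of the desired conclusion. Compactness of $\mathcal{F}$ gives uniform bounds on distortion and moduli, but does not by itself produce a level-by-level multiplicative estimate on lost area; and the tail $[N,N,\ldots]$ merely makes $\alpha_n$ bounded type, so ``Brjuno summability'' is not the operative constraint. In \cite{BC12} the propagation through renormalization levels rests on explicit measure estimates in perturbed Fatou coordinates and a careful analysis of the ``trap'' sets, none of which is reproduced or even gestured at here. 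As written, the quantitative core of the proof --- the only part that is actually hard --- is missing.
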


In the following, we fix the large number $N\geqslant 1$ such that Lemma \ref{lem:ABC12} holds.

\begin{defi}
For any Brjuno number $\alpha$ and any $0<\rho \leqslant r_{\alpha}$, we denote by
\begin{equation}
M_{\alpha}(\rho):=K_\alpha\setminus L_\alpha(\rho)=\big\{z\in K_\alpha: \exists\, k\geqslant 0 \text{ such that } P_\alpha^{\circ k}(z)\in \Delta_\alpha(\rho)\big\}.
\end{equation}
For any $k\geqslant 0$, we denote
\begin{equation}
M_{\alpha}^k(\rho):=\big\{z\in K_\alpha: P_\alpha^{\circ k}(z)\in \Delta_\alpha(\rho)\big\}\subset M_{\alpha}(\rho).
\end{equation}
\end{defi}

\begin{lem}\label{lem:control-loss}
Let $\alpha:=[a_0;a_1, a_2, \cdots$, $a_n,\cdots]\in\HT_N$ be a Brjuno number. For any $0<\rho<r_0<r_{\alpha}$, any $\varepsilon>0$ and any $k_0\geqslant 0$,  if $n$ is large, then
\begin{equation}
\area\big(M_{\alpha_n}(\rho)\setminus M_{\alpha_n}^{k_0}(\rho)\big)<\area\big(M_{\alpha}(\rho)\setminus M_\alpha^{k_0}(\rho)\big)+\varepsilon.
\end{equation}
where the sequence $(\alpha_n)_{n\geqslant 1}$ is defined in \eqref{equ:alpha-n-new}.
\end{lem}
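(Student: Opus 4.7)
The plan is to split the proof into two semicontinuity estimates: an upper bound on $\area(M_{\alpha_n}(\rho))$ and a lower bound on $\area(M_{\alpha_n}^{k_0}(\rho))$. Since $M_{\alpha_n}^{k_0}(\rho)\subset M_{\alpha_n}(\rho)$, the difference $\area(M_{\alpha_n}(\rho)\setminus M_{\alpha_n}^{k_0}(\rho))$ equals $\area(M_{\alpha_n}(\rho))-\area(M_{\alpha_n}^{k_0}(\rho))$, and subtracting the two estimates will yield the claim.

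For the upper bound, I would write $\area(M_{\alpha_n}(\rho))=\area(K_{\alpha_n})-\area(L_{\alpha_n}(\rho))$ and apply Lemma \ref{lem:ABC12}, whose hypotheses match those here, to get $\area(L_{\alpha_n}(\rho))\geqslant (1-\varepsilon_1)\area(L_\alpha(\rho))$ for large $n$. Combined with the classical upper semicontinuity of filled Julia sets for polynomial families---the Kuratowski upper limit of $K_{\alpha_n}$ is contained in $K_\alpha$, and since these sets lie in a common bounded region, reverse Fatou applied to indicators gives $\limsup_n\area(K_{\alpha_n})\leqslant \area(K_\alpha)$---this produces $\limsup_n \area(M_{\alpha_n}(\rho))\leqslant \area(M_\alpha(\rho))$.

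For the lower bound, I would apply Fatou's lemma after showing that every $z\in M_\alpha^{k_0}(\rho)$ lies in $M_{\alpha_n}^{k_0}(\rho)$ for all sufficiently large $n$. Given such a $z$, set $w:=P_\alpha^{\circ k_0}(z)\in \Delta_\alpha(\rho)=\phi_\alpha(\D_\rho)$ and fix $\rho''\in(|\phi_\alpha^{-1}(w)|,\rho)$. By Lemma \ref{lem:ABC04}(a), $r_{\alpha_n}\to r_0>\rho$, so $\phi_{\alpha_n}$ is defined on $\overline{\D}_{\rho''}$ for large $n$; by \eqref{equ:phi-usc}, $\phi_{\alpha_n}\to \phi_\alpha$ uniformly there. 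Hurwitz's theorem applied to $\zeta\mapsto \phi_{\alpha_n}(\zeta)-w$ on $\{|\zeta|=\rho''\}$ then gives $w\in \phi_{\alpha_n}(\D_{\rho''})\subset\Delta_{\alpha_n}(\rho)$ for large $n$. Since also $P_{\alpha_n}^{\circ k_0}(z)\to w$ by coefficient convergence, we conclude $P_{\alpha_n}^{\circ k_0}(z)\in \Delta_{\alpha_n}(\rho)$ eventually, i.e., $z\in M_{\alpha_n}^{k_0}(\rho)$. Fatou's lemma applied to the indicators $\mathbbm{1}_{M_{\alpha_n}^{k_0}(\rho)}$ (uniformly supported in a fixed bounded region) yields $\liminf_n \area(M_{\alpha_n}^{k_0}(\rho))\geqslant \area(M_\alpha^{k_0}(\rho))$.

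Combining the two inequalities, for any $\varepsilon>0$ and all large $n$ we obtain $\area(M_{\alpha_n}(\rho)\setminus M_{\alpha_n}^{k_0}(\rho))\leqslant \area(M_\alpha(\rho)\setminus M_\alpha^{k_0}(\rho))+\varepsilon$, as required. I expect the lower-bound step to be the main obstacle: it requires capturing the open set $M_\alpha^{k_0}(\rho)$ from inside by the $M_{\alpha_n}^{k_0}(\rho)$, which depends crucially on the one-sided convergence $r_{\alpha_n}\to r_0>\rho$ from Lemma \ref{lem:ABC04}(a) (and hence on the specific choice of $(\alpha_n)$ in \eqref{equ:alpha-n-new}). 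The upper-bound step is essentially a direct invocation of the nontrivial input Lemma \ref{lem:ABC12}.
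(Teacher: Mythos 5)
Your proof is correct and follows essentially the same route as the paper: both decompose the quantity into an upper bound on $\area(M_{\alpha_n}(\rho))$ (from Douady's upper semicontinuity of filled Julia sets together with Lemma \ref{lem:ABC12}) and a lower bound on $\area(M_{\alpha_n}^{k_0}(\rho))$ (from the locally uniform convergence $\phi_{\alpha_n}\to\phi_\alpha$ on $\overline{\D}_\rho$). The only difference is cosmetic: the paper asserts the two-sided estimate $|\area(M_{\alpha_n}^{k_0}(\rho))-\area(M_\alpha^{k_0}(\rho))|<\varepsilon/3$ with a bare citation of \eqref{equ:phi-usc}, whereas you supply the (sufficient) one-sided inequality via a Rouché/Hurwitz-plus-Fatou argument, which is a clean way to justify what the paper leaves implicit.
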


\begin{proof}
Let $\varepsilon>0$ be given. By \cite[Theorem 5.1]{Dou94}, for any neighborhood $V$ of $K_\alpha$, we have $K_{\alpha_n}\subset V$ for $n$ large enough. Hence for all large $n$, we have
\begin{equation}
\area(K_{\alpha_n})< \area(K_\alpha)+\varepsilon/3.
\end{equation}
This is equivalent to
\begin{equation}\label{equ:area-1}
\area(L_{\alpha_n}(\rho))+\area(M_{\alpha_n}(\rho)) < \area(L_{\alpha}(\rho))+\area(M_{\alpha}(\rho))+\varepsilon/3.
\end{equation}
A direct calculation shows that $K_{\alpha'}\subset\overline{\D}_2$ and hence $\area(K_{\alpha'})\leqslant 4\pi$ for all $\alpha'\in\R$.
By Lemma \ref{lem:ABC12}, for all large $n$, we have
\begin{equation}
\area(L_{\alpha_n}(\rho)) > \big(1-\tfrac{\varepsilon}{12\pi}\big)\,\area(L_\alpha(\rho)).
\end{equation}
Then by \eqref{equ:area-1}, we have
\begin{equation}
\area(M_{\alpha_n}(\rho)) < \area(M_{\alpha}(\rho))+2\varepsilon/3.
\end{equation}

Let $k_0\geqslant 0$ be given. By \eqref{equ:conf-radius}, if $n$ is large, then
\begin{equation}
\big|\area(M_{\alpha_n}^{k_0}(\rho))-\area(M_{\alpha}^{k_0}(\rho))\big|< \varepsilon/3.
\end{equation}
Hence $\area\big(M_{\alpha_n}(\rho)\setminus M_{\alpha_n}^{k_0}(\rho)\big)<\area\big(M_{\alpha}(\rho)\setminus M_\alpha^{k_0}(\rho)\big)+\varepsilon$.
\end{proof}

\subsection{The surgery's continuity}

For any $\alpha\in\PZ$, $0<r<r_\alpha$ and $\theta\in[0,2\pi)$, let $A_\alpha:=A_{\alpha,r,\theta}$ be the Herman ring of the rational map $Q_{\alpha,r,\theta}$ obtained in \S\ref{subsec:rigidity}.
Note that $A_\alpha$ has modulus $\frac{1}{\pi}\log\frac{r_\alpha}{r}$ and the conformal angle between the two critical points $1\in\partial_+A_\alpha$ and $c\in\partial_-A_\alpha$ with respect to $A_\alpha$ is $\theta$.
We use
\begin{equation}\label{equ:psi-alpha-HR}
\chi_{\alpha,r,\theta}:\A_{r/r_\alpha,r_\alpha/r}\to A_\alpha
\end{equation}
to denote a conformal map such that
\begin{equation}\label{equ:linearization-HR}
\chi_{\alpha,r,\theta}(e^{2\pi\ii\alpha} \zeta)=Q_{\alpha,r,\theta}\circ\chi_{\alpha,r,\theta}(\zeta) \text{\quad for all } \zeta\in \A_{r/r_\alpha,r_\alpha/r}.
\end{equation}
Note that $\chi_{\alpha,r,\theta}$ is unique up to\footnote{In fact, one can obtain a unique $\chi_{\alpha,r,\theta}$ by requiring $\chi_{\alpha,r,\theta}(\frac{r_\alpha}{r})=1$. However, under this normalization, we cannot obtain $\chi_{\alpha_n,r,\theta}\to \chi_{\alpha,r,\theta}\,(n\to\infty)$ in Lemma \ref{lem:Herman}(b).} pre-compositing a rigid rotation of $\A_{r/r_\alpha,r_\alpha/r}$. In other words, if $\widetilde{\chi}_{\alpha,r,\theta}$ is another conformal map satisfying the above conditions, then there exists $\beta\in\R$ such that $\widetilde{\chi}_{\alpha,r,\theta}(\zeta)=\chi_{\alpha,r,\theta}(e^{\ii\beta}\zeta)$ for all $\zeta\in\A_{r/r_\alpha,r_\alpha/r}$.

\medskip
The following lemma is crucial for constructing smooth degenerate Herman rings, which is the Herman ring version of Lemma \ref{lem:ABC04}.

\begin{lem}\label{lem:Herman}
For given $\alpha\in\PZ\cap\HT_N$, $0<r<r_0<r_\alpha$ and $\theta\in[0,2\pi)$, let $\chi_{\alpha,r,\theta}$ be a conformal map in \eqref{equ:psi-alpha-HR} and $(\alpha_n)_{n\geqslant 1}$ be the sequence of bounded type numbers defined in \eqref{equ:alpha-n-new}. Then
\begin{enumerate}
\item $\alpha_n\to\alpha$, $r_{\alpha_n}\to r_0$ and $Q_{\alpha_n,r,\theta}\to Q_{\alpha,r,\theta}$ uniformly on $\EC$ as $n\to\infty$;
\item There exists a sequence $\{\chi_{\alpha_n,r,\theta}\}_{n\geqslant 1}$ such that $\chi_{\alpha_n,r,\theta}\to \chi_{\alpha,r,\theta}$ uniformly in $\A_{r/\rho,\,\rho/r}$ as large $n\to\infty$, for each $\rho\in(r,r_0)$;
\item For any $\varepsilon>0$, if $n$ is large, then $Q_{\alpha_n,r,\theta}$ has two repelling cycles which are $\varepsilon$-close to $\chi_{\alpha,r,\theta}(\T_{r_0/r})$ and $\chi_{\alpha,r,\theta}(\T_{r/r_0})$ respectively in the Hausdorff metric.
\end{enumerate}
\end{lem}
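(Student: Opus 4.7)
The plan is to prove the three parts in sequence, concentrating the technical work in (a).

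For part (a), I fix $r' \in (r, r_0)$ once and for all so that $r < r' < r_0 < r_{\alpha_n}$ holds for large $n$, and run the Siegel-to-Herman surgery of \S\ref{subsec:S-to-H} at this common $r'$. The convergence $\alpha_n \to \alpha$ is built into \eqref{equ:alpha-n-new}, and Lemma \ref{lem:ABC04}(a) gives $r_{\alpha_n} \to r_0$; consequently $\phi_{\alpha_n} \to \phi_\alpha$ locally uniformly on $\D_{r_0}$ by \eqref{equ:phi-usc}, and likewise $\phi_{-\alpha_n} \to \phi_{-\alpha}$. Since Lemma \ref{lem:rigidity} guarantees that $Q_{\alpha_n, r, \theta}$ is independent of $r'$ and of the interpolation $\psi_{\alpha_n, 0}$, I am free to choose the $\psi_{\alpha_n, 0}$ converging in $C^\infty(\overline{\A}_{r/r', 1}, \C)$ to $\psi_{\alpha, 0}$, the boundary data on $\T \cup \T_{r/r'}$ converging by \eqref{equ:phi-usc} and \eqref{equ:psi-pm-std}. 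Thus the quasiregular maps $F_{\alpha_n}$ in \eqref{equ:F-quasiregular} converge to $F_\alpha$ locally uniformly off $\partial\Delta_\alpha \cup \eta_r(\partial\Delta_{-\alpha})$, with uniformly bounded dilatations. With $\Phi_{\alpha_n}$ normalized as in Lemma \ref{lem:surgery-S-to-H} by $0 \mapsto 0$, $\infty \mapsto \infty$, $\omega_{\alpha_n} \mapsto 1$, uniform convergence $Q_{\alpha_n, r, \theta} \to Q_{\alpha, r, \theta}$ on $\EC$ reduces to uniform convergence $\Phi_{\alpha_n} \to \Phi_\alpha$.

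For the latter, by the standard continuous dependence of normalized quasiconformal homeomorphisms on their Beltrami coefficients in $L^1$ (with uniform $L^\infty$-bound), it suffices to prove $\sigma_{\alpha_n} \to \sigma_\alpha$ in $L^1$. The non-trivial support of $\sigma_{\alpha_n}$ lies in $\bigcup_{k \geq 0} F_{\alpha_n}^{-k}(A^0_{\alpha_n})$ with $A^0_{\alpha_n} := A(\eta_r(\gamma_{-\alpha_n, r'}), \gamma_{\alpha_n, r}) \subset \Delta_{\alpha_n}(r)$; the ``outer'' preimages (under $P_{\alpha_n}$) lie in $M_{\alpha_n}(r)$, and the ``inner'' preimages (under $\eta_r \circ P_{-\alpha_n} \circ \eta_r^{-1}$) lie in $\eta_r(M_{-\alpha_n}(r'))$. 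Given $\varepsilon > 0$, choose $k_0 \geq 0$ so that
\begin{equation*}
\area\bigl(M_\alpha(r) \setminus M_\alpha^{k_0}(r)\bigr) + \area\bigl(M_{-\alpha}(r') \setminus M_{-\alpha}^{k_0}(r')\bigr) < \varepsilon;
\end{equation*}
Lemma \ref{lem:control-loss} applied to both $\alpha$ and $-\alpha$ then gives the same bound with $2\varepsilon$ for $\alpha_n$ for all large $n$. Outside this deep-pullback set, $\sigma_{\alpha_n}$ is a finite-order pullback of $\sigma_{\alpha_n}|_{A^0_{\alpha_n}}$ through at most $k_0$ iterates of $F_{\alpha_n}$, and the uniform convergences of $F_{\alpha_n}$, $\phi_{\alpha_n}$, $\phi_{-\alpha_n}$, and $\psi_{\alpha_n, 0}$ deliver uniform pointwise convergence $\sigma_{\alpha_n} \to \sigma_\alpha$ there. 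This yields the desired $L^1$-convergence, hence $\Phi_{\alpha_n} \to \Phi_\alpha$ uniformly on $\EC$ and $Q_{\alpha_n, r, \theta} \to Q_{\alpha, r, \theta}$ uniformly.

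Part (b) follows by writing $\chi_{\alpha_n, r, \theta} = \Phi_{\alpha_n} \circ \Psi_{\alpha_n}$ (cf.\ \eqref{equ:Psi-Phi}): the compact sub-annulus $\overline{\A}_{r/\rho, \rho/r}$ lies in $\A_{1, r_{\alpha_n}/r} \cup \A_{r/r_{\alpha_n}, 1}$ for all large $n$, on which $\Psi_{\alpha_n}$ restricts to the conformal maps $\psi_{\alpha_n, \pm}$ of \eqref{equ:psi-pm-std} converging locally uniformly to $\psi_{\alpha, \pm}$ by \eqref{equ:phi-usc}, so composition with the uniformly convergent $\Phi_{\alpha_n}$ gives (b). For part (c), Lemma \ref{lem:ABC04}(b) applied to $P_{\alpha_n}$ yields a repelling cycle $C_n^+$ of $P_{\alpha_n}$ arbitrarily close to $\phi_\alpha(\T_{r_0})$ for large $n$; since $C_n^+ \subset \gamma_{\alpha, r}^\Ext$ where $F_{\alpha_n} = P_{\alpha_n}$, the image $\Phi_{\alpha_n}(C_n^+)$ is a repelling cycle of $Q_{\alpha_n, r, \theta}$, and the identities $\Phi_\alpha(\phi_\alpha(\T_{r_0})) = \Phi_\alpha(\psi_{\alpha, +}(\T_{r_0/r})) = \chi_{\alpha, r, \theta}(\T_{r_0/r})$ (from \eqref{equ:Psi-Phi} and \eqref{equ:psi-pm-std}), combined with uniform convergence of $\Phi_{\alpha_n}$, place this cycle $\varepsilon$-close to $\chi_{\alpha, r, \theta}(\T_{r_0/r})$. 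The analogous argument for $P_{-\alpha_n}$, together with $\psi_{\alpha, -}(\T_{r/r_0}) = \eta_r(\phi_{-\alpha}(\T_{r_0}))$, yields the second cycle near $\chi_{\alpha, r, \theta}(\T_{r/r_0})$. The principal obstacle is the $L^1$-convergence in part (a): the moduli $\tfrac{1}{\pi}\log(r_{\alpha_n}/r) \to \tfrac{1}{\pi}\log(r_0/r)$ are strictly smaller than the modulus $\tfrac{1}{\pi}\log(r_\alpha/r)$ of the Herman ring of $Q_{\alpha, r, \theta}$, so the supports of $\sigma_{\alpha_n}$ and $\sigma_\alpha$ genuinely differ in the limit; Lemma \ref{lem:control-loss} is the crucial input that rules out a persistent area loss, which is precisely why the rotation numbers must be confined to $\HT_N$.
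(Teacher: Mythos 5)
Your proposal matches the paper's strategy exactly: in part (a) the crux is to show convergence of the Beltrami coefficients by splitting their support into shallow pullbacks of the interpolation annulus (handled by finite-level uniform convergence of the quasiregular models $F_{\alpha_n}$) and deep pullbacks (whose area is controlled uniformly via Lemma \ref{lem:control-loss}, applied as you correctly note to both $\alpha$ and $-\alpha$ for the outer and inner dynamics), and your $L^1$-with-uniform-$L^\infty$ formulation is a cleaner rendering of what the paper actually proves even though it writes $\|\sigma_{\alpha_n}-\sigma_\alpha\|_\infty\to 0$ in \eqref{equ:sigma-n-cont} (what is shown there is convergence in measure). One small inaccuracy in part (b): $\overline{\A}_{r/\rho,\rho/r}$ is \emph{not} contained in $\A_{1,r_{\alpha_n}/r}\cup\A_{r/r_{\alpha_n},1}$ --- that union omits $\T$, and in fact the interpolation annulus $\overline{\A}_{r/r',1}$ sits inside $\overline{\A}_{r/\rho,\rho/r}$ --- and $\chi_{\alpha_n,r,\theta}$ equals $\Phi_{\alpha_n}\circ\Psi_{\alpha_n}$ only up to a rotation fixed by the normalization $\chi_{\alpha_n}'(1)>0$ (cf.\ \eqref{equ:chi-alpha-coor}); neither affects the outcome since $\Psi_{\alpha_n}\to\Psi_\alpha$ on the whole compact annulus by \eqref{equ:Psi-n-conv} and the rotations converge, but the paper sidesteps both points by running the normal-family argument of \eqref{equ:phi-usc} directly on the linearizers $\chi_{\alpha_n}$.
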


\begin{proof}
(a) By Lemma \ref{lem:ABC04}(a), $\alpha_n\to\alpha$ and $r_{\alpha_n}\to r_0$ as $n\to\infty$. We take $r<r'<\rho<r_0$. Shifting the subscript of $(\alpha_n)_{n\geqslant 1}$ if necessary, without loss of generality we assume that
\begin{equation}
r_{\alpha_n}>\rho>r'>r \text{\quad for all } n\geqslant 1.
\end{equation}
We continue using the same notations as in \S\ref{subsec:S-to-H}.
The conformal maps
\begin{equation}\label{equ:psi-n-alpha-pm}
\begin{split}
\psi_{\alpha_n,+}(\zeta):=\phi_{\alpha_n}\big(r \zeta\big): &~ \A_{1,r_{\alpha_n}/r} \to A(\gamma_{\alpha_n,r},\partial\Delta_{\alpha_n}) \text{\quad and\quad} \\
\psi_{\alpha_n,-}(\zeta):= \eta_r\circ\phi_{-\alpha_n}\Big(\tfrac{r e^{\ii\theta}}{\zeta}\Big): &~ \A_{r/r_{\alpha_n},r/r'} \to A(\eta_r(\partial\Delta_{-\alpha_n}),\eta_r(\gamma_{-\alpha_n,r'}))
\end{split}
\end{equation}
satisfy
\begin{equation}\label{equ:psi-n-pm}
\begin{split}
\psi_{\alpha_n,+}^{-1}\circ P_{\alpha_n}\circ\psi_{\alpha_n,+}(\zeta)=&~R_{\alpha_n}(\zeta) \text{\quad for all } \zeta\in\A_{1,r_{\alpha_n}/r} \text{\quad and} \\
\psi_{\alpha_n,-}^{-1}\circ (\eta_r\circ P_{-\alpha_n}\circ\eta_r^{-1})\circ\psi_{\alpha_n,-}(\zeta)=&~R_{\alpha_n}(\zeta) \text{\quad for all } \zeta\in\A_{r/r_{\alpha_n},r/r'}.
\end{split}
\end{equation}
By \eqref{equ:conf-radius}, $\phi_{\alpha_n}\to\phi_\alpha$ and $\phi_{-\alpha_n}\to\phi_{-\alpha}$ uniformly on $\overline{\D}_{\rho}$ as $n\to\infty$. We have
\begin{equation}\label{equ:psi-n-convg}
\psi_{\alpha_n,+}\to\psi_{\alpha,+} \text{\quad and\quad} \psi_{\alpha_n,-}\to\psi_{\alpha,-}
\end{equation}
uniformly on $\overline{\A}_{1,\,\rho/r}$ and $\overline{\A}_{r/\rho,\,r/r'}$ respectively as $n\to\infty$, where $\psi_{\alpha,+}$ and $\psi_{\alpha,-}$ are defined in \eqref{equ:psi-pm-std}.
There exists an interpolation
\begin{equation}\label{equ:psi-0-n}
\psi_{\alpha_n,0}:\overline{\A}_{r/r',1} \to \overline{A(\eta_r(\gamma_{-\alpha_n,r'}),\gamma_{\alpha_n,r})}
\end{equation}
such that $\Psi_{\alpha_n}: \A_{r/r_{\alpha_n},r_{\alpha_n}/r} \to A(\eta_r(\partial\Delta_{-\alpha_n}),\partial\Delta_{\alpha_n})$ is a $C^\infty$-diffeomorphism, where
\begin{equation}
\Psi_{\alpha_n}(\zeta):=
\left\{
\begin{array}{ll}
\psi_{\alpha_n,+}(\zeta)  &~~~~~~~\text{if}~\zeta\in \A_{1,r_{\alpha_n}/r}, \\
\psi_{\alpha_n,-}(\zeta) &~~~~~~\text{if}~\zeta\in  \A_{r/r_{\alpha_n},r/r'}, \\
\psi_{\alpha_n,0}(\zeta) &~~~~~~\text{if}~\zeta\in \overline{\A}_{r/r',1}.
\end{array}
\right.
\end{equation}
By \eqref{equ:psi-n-convg}, $\psi_{\alpha_n,0}$ can be chosen such that
\begin{equation}\label{equ:Psi-n-conv}
\psi_{\alpha_n,0}\to \psi_{\alpha,0} \text{\quad and\quad}\Psi_{\alpha_n}\to\Psi_{\alpha}
\end{equation}
uniformly on $\overline{\A}_{r/r',1}$ and $\overline{\A}_{r/\rho,\,\rho/r}$ respectively as $n\to\infty$, where $\psi_{\alpha,0}$ and $\Psi_\alpha$ are defined in \eqref{equ:psi-0} and \eqref{equ:Psi} respectively.

\medskip
Define
\begin{equation}\label{equ:F-n-quasiregular}
F_{\alpha_n}(z):=
\left\{
\begin{array}{ll}
P_{\alpha_n}(z)  &~~~~~~~\text{if}~z\in \gamma_{\alpha_n,r}^\Ext, \\
\eta_r\circ P_{-\alpha_n}\circ\eta_r^{-1}(z) &~~~~~~\text{if}~z\in \eta_r(\gamma_{-\alpha_n,r'}^\Ext), \\
\psi_{\alpha_n,0}\circ R_{\alpha_n}\circ\psi_{\alpha_n,0}^{-1}(z) &~~~~~~\text{if}~z\in \overline{A(\eta_r(\gamma_{-\alpha_n,r'}),\gamma_{\alpha_n,r})}.
\end{array}
\right.
\end{equation}
Let $F_\alpha$ be as defined in \eqref{equ:F-quasiregular}. By \eqref{equ:Psi-n-conv}, we have
\begin{equation}\label{equ:F-n}
F_{\alpha_n}\to F_\alpha
\end{equation}
uniformly on $\EC$ as $n\to\infty$.
Note that $F_{\alpha_n}:\EC\to\EC$ is a $C^\infty$-branched covering and hence a quasi-regular map which is analytic outside $\overline{A(\eta_r(\gamma_{-\alpha_n,r'}),\gamma_{\alpha_n,r})}$.
We pull back the standard complex structure $\sigma_0$ in $\A_{r/r_{\alpha_n},r_{\alpha_n}/r}$ to define an almost complex structure $\sigma_{\alpha_n}:=(\Psi_{\alpha_n}^{-1})^*(\sigma_0)$ in $A_{\alpha_n}':=A(\eta_r(\partial\Delta_{-\alpha_n}),\partial\Delta_{\alpha_n})$. Similar to Lemma \ref{lem:surgery-S-to-H}, $\sigma_{\alpha_n}$ is invariant under $F_{\alpha_n}: A_{\alpha_n}'\to A_{\alpha_n}'$.

Next, we extend $\sigma_{\alpha_n}$ to $F_{\alpha_n}^{-k}(A_{\alpha_n}')$ by defining $\sigma_{\alpha_n}:=(F_{\alpha_n}^{\circ k})^*(\sigma_{\alpha_n})$ in $F_{\alpha_n}^{-k}(A_{\alpha_n}')\setminus F_{\alpha_n}^{-(k-1)}(A_{\alpha_n}')$ for all $k\geqslant 1$. In the rest place we define $\sigma_{\alpha_n}:=\sigma_0$. Note that $\sigma_{\alpha_n}$ has uniformly bounded dilatation. By the measurable Riemann mapping theorem, there exists a unique quasiconformal mapping $\Phi_{\alpha_n}:\EC\to\EC$ integrating the almost complex structure $\sigma_{\alpha_n}$ such that
\begin{itemize}
\item $\Phi_{\alpha_n}^*(\sigma_0)=\sigma_{\alpha_n}$ and $\Phi_{\alpha_n}:(\EC,\sigma_{\alpha_n})\rightarrow (\EC,\sigma_0)$ is an analytic isomorphism;
\item  $\Phi_{\alpha_n}(0)=0$, $\Phi_{\alpha_n}(\infty)=\infty$ and $\Phi_{\alpha_n}(\omega_{\alpha_n})=1$, where $\omega_{\alpha_n}=-e^{2\pi\ii\alpha_n}/2$ is the critical point of $P_{\alpha_n}$.
\end{itemize}
Hence
\begin{equation}\label{equ:Q-n-r-theta}
Q_{\alpha_n,r,\theta}:=\Phi_{\alpha_n}\circ F_{\alpha_n}\circ \Phi_{\alpha_n}^{-1}:(\EC,\sigma_0)\to(\EC,\sigma_0)
\end{equation}
is a cubic rational map having two super-attracting fixed points $0$ and $\infty$, two different critical points $1$ and $c_n:=\Phi_{\alpha_n}(\eta_r(\omega_{-\alpha_n}))$ in $\C$ and a fixed Herman ring $A_{\alpha_n}:=\Phi_{\alpha_n}(A_{\alpha_n}')$ with modulus $\frac{1}{\pi}\log\frac{r_{\alpha_n}}{r}$.

\medskip
Let $\sigma_\alpha$ be as introduced in the proof of Lemma \ref{lem:surgery-S-to-H}. We claim that
\begin{equation}\label{equ:sigma-n-cont}
\|\sigma_{\alpha_n}-\sigma_\alpha\|_\infty\to 0 \text{\quad as }n \to\infty.
\end{equation}
Let $\Phi_\alpha:\EC\to\EC$ be the unique quasiconformal mapping satisfying $\Phi_\alpha^*(\sigma_0)=\sigma_\alpha$, $\Phi_\alpha(0)=0$, $\Phi_\alpha(\infty)=\infty$ and $\Phi_\alpha(\omega_\alpha)=1$. Note that $\omega_{\alpha_n}\to\omega_\alpha$ as $n\to\infty$. By \eqref{equ:sigma-n-cont}, $\Phi_{\alpha_n}\to \Phi_\alpha$ uniformly on $\EC$ as $n\to\infty$.
By \eqref{equ:F-n} and \eqref{equ:Q-n-r-theta}, we have $Q_{\alpha_n,r,\theta}\to Q_{\alpha,r,\theta}$ as $n\to\infty$.

\medskip
To prove \eqref{equ:sigma-n-cont}, it suffices to prove that for any $\varepsilon>0$ and $\delta>0$, if $n$ is large, then $\area(X_n^\delta)<\varepsilon$, where
\begin{equation}
X_n^\delta:=\big\{z\in\C\,:\,\big|\sigma_{\alpha_n}(z)-\sigma_\alpha(z)\big|>\delta\big\}.
\end{equation}
Note that $F_{\alpha_n}:\EC\to\EC$ and $F_{\alpha}:\EC\to\EC$ are analytic in $\EC\setminus W_{\alpha_n}$ and $\EC\setminus W_\alpha$ respectively, where
\begin{equation}
W_{\alpha_n}:= \psi_{\alpha_n,0}(\overline{\A}_{r/r',1}) \text{\quad and\quad} W_\alpha:=\psi_{\alpha,0}(\overline{\A}_{r/r',1}).
\end{equation}
By the definition of $\sigma_{\alpha_n}$ and $\sigma_\alpha$, for any $n\geqslant 1$, we have $\sigma_{\alpha_n}(z)=\sigma_\alpha(z)=0$ for all $z\in\EC\setminus (\MW_{n}\cup \MW)$ and
\begin{equation}\label{equ:X-n-0}
X_n^\delta\subset \MW_{n}\cup \MW,
\end{equation}
where
\begin{equation}
\MW_{n}:=\bigcup_{j\geqslant 0} F_{\alpha_n}^{-j}(W_{\alpha_n}) \text{\quad and\quad} \MW:=\bigcup_{j\geqslant 0} F_{\alpha}^{-j}(W_{\alpha}).
\end{equation}

For $n\geqslant 1$ and $k\geqslant 0$, we denote
\begin{equation}
\MW_{n}^k:=\bigcup_{0\leqslant j\leqslant k} F_{\alpha_n}^{-j}(W_{\alpha_n}) \text{\quad and\quad} \MW^k:=\bigcup_{0\leqslant j\leqslant k} F_{\alpha}^{-j}(W_{\alpha}).
\end{equation}
Note that $\area(\MW)<+\infty$, $W_{\alpha_n}\subset \Delta_{\alpha_n}(r)$ and $W_\alpha\subset \Delta_\alpha(r)$.
By the surgery construction and Lemma \ref{lem:control-loss}, there exists a large $k_0\geqslant 1$ such that for all large $n$,
\begin{equation}\label{equ:inequ-1}
\area(\MW\setminus \MW^{k_0})<\varepsilon/5  \text{\quad and\quad} \area(\MW_n\setminus \MW_n^{k_0})<\varepsilon/5.
\end{equation}

By \eqref{equ:Psi-n-conv}, $\psi_{\alpha_n,0}\to \psi_{\alpha,0}$ uniformly on $\overline{\A}_{r/r',1}$ as $n\to\infty$.
Hence there exists an open topological disk $V$, such that for large $n$, we have $\overline{V}\subset W_{\alpha_n}\cap W_\alpha$ and
\begin{equation}\label{equ:inequ-2}
\area(\MW_{n}^{k_0}\setminus \MV^{k_0})<\varepsilon/5 \text{\quad and\quad} \area(\MW^{k_0}\setminus \MV^{k_0})<\varepsilon/5,
\end{equation}
where $\MV^{k_0}:=\bigcup_{0\leqslant j\leqslant {k_0}}F_\alpha^{-j}(V)$.
By \eqref{equ:F-n}, if $n$ is large, we have
\begin{equation}\label{equ:inequ-3}
\area(X_n^\delta\cap \MV^{k_0})<\varepsilon/5.
\end{equation}
Combining \eqref{equ:X-n-0}, \eqref{equ:inequ-1}, \eqref{equ:inequ-2} and \eqref{equ:inequ-3}, for large $n$ we have $\area(X_n^\delta)<\varepsilon$. By the arbitrariness of $\varepsilon$ and $\delta$, \eqref{equ:sigma-n-cont} holds and we have $Q_{\alpha_n,r,\theta}\to Q_{\alpha,r,\theta}$ as $n\to\infty$.

\medskip
(b) Since the numbers $r\in(0,r_0)$ and $\theta\in[0,2\pi)$ are fixed as we take limit, for simplicity we use $\alpha_n$ and $\alpha$ to replace the subscripts $(\alpha_n,r,\theta)$ and $(\alpha,r,\theta)$ respectively.
The proof of this part is similar to \eqref{equ:conf-radius}.
Indeed, by Part (a), there exists a large $R>0$ such that for all $n\geqslant 1$,
\begin{equation}\label{equ:nbd-infty}
\EC\setminus\D_R\subset B_{\alpha_n}^\infty \text{\quad and\quad}\EC\setminus\D_R\subset B_\alpha^\infty,
\end{equation}
where $B_{\alpha_n}^\infty$ and $B_\alpha^\infty$ are the immediate super-attracting basins of $\infty$ with respect to $Q_{\alpha_n}$ and $Q_\alpha$ respectively.
Then the Herman ring $A_{\alpha_n}$ of $Q_{\alpha_n}$ is disjoint from $\EC\setminus\D_R$ for all $n\geqslant 1$. The restrictions $\chi_{\alpha_n}|_{\A_{r/\rho,\,\rho/r}}$ of the linearizing maps $\chi_{\alpha_n}:\A_{r/r_{\alpha_n},\,r_{\alpha_n}/r}\to A_{\alpha_n}$  are well-defined for all $n\geqslant 1$, and thus form a normal family. Passing to the limit in
\begin{equation}
\chi_{\alpha_n}(e^{2\pi\ii\alpha_n} \zeta)=Q_{\alpha_n}(\chi_{\alpha_n}(\zeta)),
\end{equation}
we see that any subsequence limit $\widetilde{\chi}_\alpha$ of $(\chi_{\alpha_n})_{n \geqslant 1}$ linearizes $Q_{\alpha}$.
By the uniqueness of the linearizing map, there exists $\beta\in\R$ such that $\widetilde{\chi}_\alpha(\zeta)=\chi_\alpha(e^{\ii\beta}\zeta)$ for all $\zeta\in\A_{r/\rho,\,\rho/r}$.
In particular, $\chi_{\alpha_n}(\T)\to \chi_{\alpha}(\T)$ as $n\to\infty$ in the Hausdorff metric.
Therefore, by pre-compositing a rigid rotation if necessary, we may assume that $\{\chi_{\alpha_n}\}_{n\geqslant 1}$ is chosen such that $\chi_{\alpha_n}(1)\in\chi_{\alpha_n}(\T)$ satisfies $\chi_{\alpha_n}(1)\to \chi_\alpha(1)$ as $n\to\infty$.
By the arbitrariness of $\rho\in(r,r_0)$, this implies that $\chi_{\alpha_n}\to \chi_{\alpha}$ uniformly in $\A_{r/\rho,\,\rho/r}$ as $n\to\infty$.

\medskip
(c) A direct calculation shows that the filled-in Julia sets of $P_{\alpha_n}$ and $P_\alpha$ are contained in the closed disk $\overline{\D}_2$.
Note that $(\Phi_{\alpha_n})_{n\geqslant 1}$ are quasiconformal mappings having uniformly bounded dilatation and satisfying $\Phi_{\alpha_n}(0)=0$, $\Phi_{\alpha_n}(\infty)=\infty$ and $\Phi_{\alpha_n}(\omega_{\alpha_n})=1$, where $\omega_{\alpha_n}\to\omega_\alpha$ as $n\to\infty$.
By Mori's theorem (see e.g., \cite[p.\,66]{LV73}), there exist constants $K_0$, $K>1$ such that for all $z\in\overline{\D}_2$ and $n\geqslant 1$, we have
\begin{equation}\label{equ:distortion-qc}
|\Phi_{\alpha_n}(z_1)-\Phi_{\alpha_n}(z_2)|<K_0 |z_1-z_2|^{\frac{1}{K}}.
\end{equation}
By \eqref{equ:Psi-Phi}, the Herman ring $A_\alpha$ of $Q_\alpha$ has modulus $\frac{1}{\pi}\log\frac{r_{\alpha}}{r}$ and $\Phi_{\alpha}\circ\Psi_{\alpha}:\A_{r/r_{\alpha},r_{\alpha}/r} \to A_{\alpha}$ is a conformal isomorphism. By the uniqueness of the linearizing map, there exists a number $\beta\in\R$ such that
\begin{equation}\label{equ:chi-alpha-coor}
\chi_\alpha(\zeta)=\Phi_\alpha\circ\Psi_\alpha(e^{\ii \beta}\zeta) \text{\quad for all }\zeta\in \A_{r/r_{\alpha},r_{\alpha}/r}.
\end{equation}

Let $\varepsilon>0$ be given. Without loss of generality we only prove that for large $n$, $Q_{\alpha_n}$ has a repelling cycle which is $\varepsilon$-close to $\chi_\alpha(\T_{r_0/r})$ in the Hausdorff metric since the proof for $\chi_\alpha(\T_{r/r_0})$ is completely similar. By Lemma \ref{lem:ABC04}(b), if $n$ is large, $P_{\alpha_n}$ has a repelling cycle $C_n\subset\overline{\D}_2$ which is $\big(\frac{\varepsilon}{2K_0}\big)^K$-close to $\phi_{\alpha}(\T_{r_0})$. By \eqref{equ:distortion-qc}, $\Phi_{\alpha_n}\circ P_{\alpha_n}\circ \Phi_{\alpha_n}^{-1}$ has a repelling cycle $\Phi_{\alpha_n}(C_n)$ which is $\tfrac{\varepsilon}{2}$-close to $\Phi_{\alpha_n}\circ\phi_{\alpha}(\T_{r_0})$.
Since $\Phi_{\alpha_n}\to \Phi_\alpha$ uniformly on $\EC$ as $n\to\infty$, by \eqref{equ:psi-alpha-pm}, \eqref{equ:Psi} and \eqref{equ:chi-alpha-coor}, for all large $n$, $\Phi_{\alpha_n}\circ\phi_{\alpha}(\T_{r_0})$ is $\tfrac{\varepsilon}{2}$-close to
\begin{equation}
\Phi_{\alpha}\circ\phi_{\alpha}(\T_{r_0})=\Phi_{\alpha}\circ\Psi_{\alpha}(\T_{r_0/r})=\chi_\alpha(\T_{r_0/r}).
\end{equation}
According to \eqref{equ:F-n-quasiregular} and \eqref{equ:Q-n-r-theta}, for all large $n$, $Q_{\alpha_n}$ has a repelling cycle which is $\varepsilon$-close to $\chi_\alpha(\T_{r_0/r})$ in the Hausdorff metric.
\end{proof}

Let $(Q_{\alpha_n,r,\theta})_{n\geqslant 1}$ be the sequence of cubic rational maps obtained in Lemma \ref{lem:Herman}.
By Lemma \ref{lem:rigidity}, there exist unique pairs $(a, u)$ and $(a_n,u_n)$ for each $n\geqslant 1$ such that
\begin{equation}\label{equ:Q-a-n-u-n}
Q_{a_n,u_n}=Q_{\alpha_n,r,\theta}, \quad Q_{a,u}=Q_{\alpha,r,\theta}\text{\quad and\quad} (a_n,u_n)\to(a,u) \text{ as }n\to\infty.
\end{equation}

Note that it is hard to find the explicit parameter pair $(a,u)$ for given $(\alpha, r,\theta)$ in general. The purpose of Figure \ref{Fig_Herman-sequence} is to give a \textit{rough} illustration of Lemma \ref{lem:Herman}. We take $\alpha=(\sqrt{5}-1)/2=[0;1,1,1,\cdots]$, $N=1$ and consider the sequence $(\alpha_n)_{n\geqslant 1}$ in \eqref{equ:alpha-n-new} such that $\alpha_n\to\alpha$ and $r_{\alpha_n}\to r_0=\frac{10}{13}r_\alpha$  as $n\to\infty$.

\begin{figure}[tpb]
  \setlength{\unitlength}{1mm}
  \fbox{\includegraphics[width=0.42\textwidth]{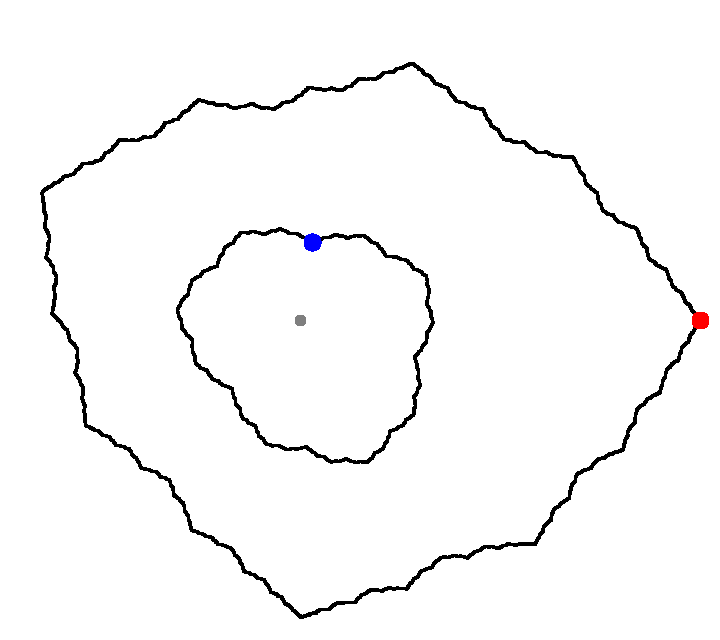}} \quad
  \fbox{\includegraphics[width=0.42\textwidth]{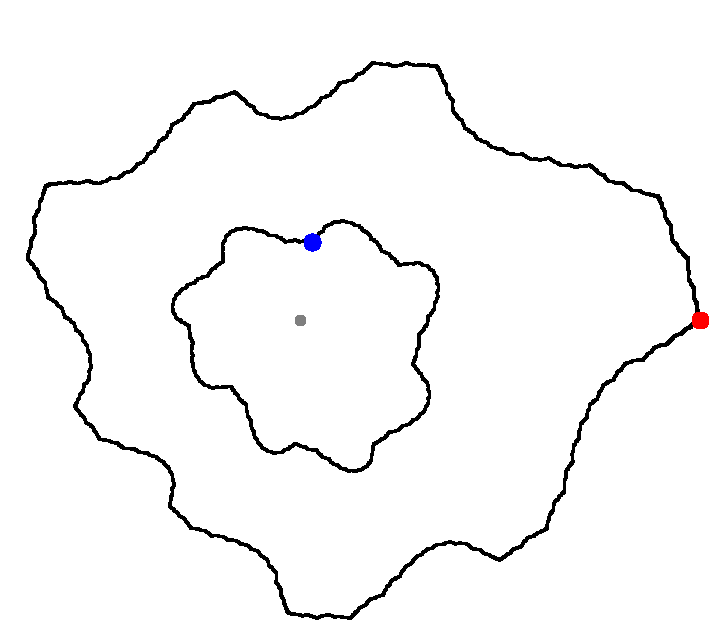}}
  \put(-4,22){$1$}
  \put(-36.8,22){$0$}
  \put(-34.5,30){$c$}
  \put(-70,28.5){$1$}
  \put(-104,22){$0$}
  \put(-102,30){$c$} \vskip0.3cm
  \fbox{\includegraphics[width=0.42\textwidth]{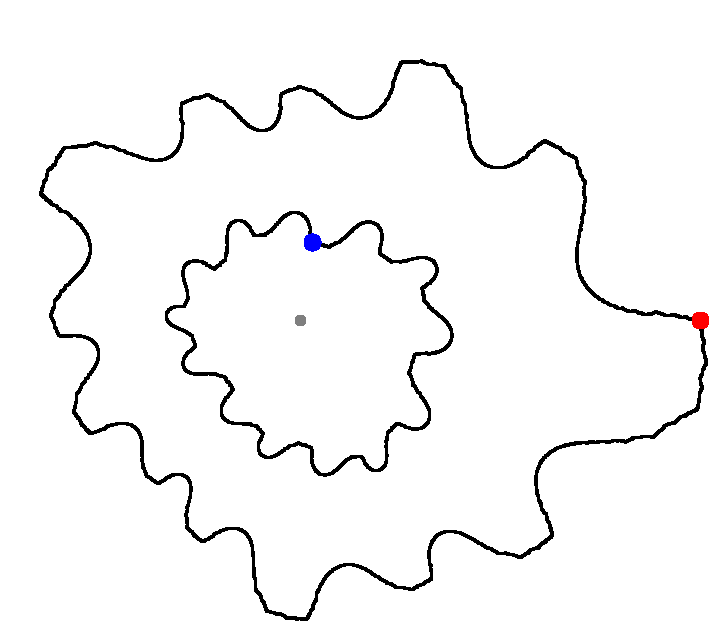}} \quad
  \fbox{\includegraphics[width=0.42\textwidth]{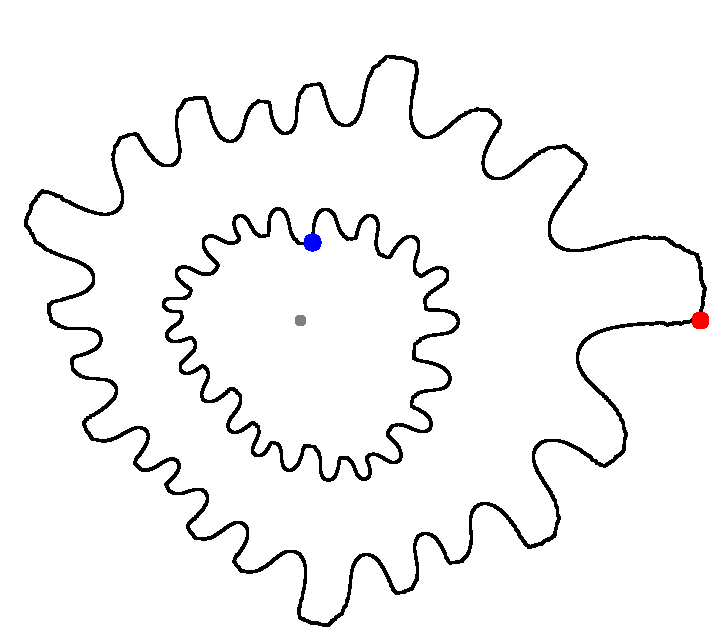}}
  \put(-4,22){$1$}
  \put(-36.8,22){$0$}
  \put(-34.5,30){$c$}
  \put(-71,28.5){$1$}
  \put(-104,22){$0$}
  \put(-102,29.8){$c$}
  \caption{The boundaries of Herman rings of $Q_{a,u}$ with $\alpha=(\sqrt{5}-1)/2$ and the corresponding sequence $Q_{a_n,u_n}$ for $n=5,6,7$ (from top left to bottom right). As a \textit{rough} illustration of Lemma \ref{lem:Herman}, we fix the choice of the parameter $a=a_n=2+\frac{\ii}{10}$. One may observe that the shape and the conformal moduli of these annuli tend to some limit (compare \cite[Fig. 4, p.\,25]{ABC04} for the case of Siegel disks).}
  \label{Fig_Herman-sequence}
\end{figure}

To obtain Figure \ref{Fig_Herman-sequence}, we first draw the $u$-parameter space of $Q_{a,u}$ globally by fixing some $a$ which\footnote{By a similar idea of \cite[Proof of Theorem B, p.\,11]{Yan22}, one can show that for any given Brjuno number $\alpha$, if $|a-2|$ is sufficiently small, then there exists a parameter $u$ such that $Q_{a,u}$ has a fixed Herman ring of rotation number $\alpha$. This strategy has been used in \cite[p.\,993]{Yan22} to generate pictures of periodic Herman rings.} is close to $2$ (Here we choose $a=2+\frac{\ii}{10}$). One can obtain the approximate parameter $u$ (resp. $u_n$) corresponding to the given rotation number $\alpha$ (resp. $\alpha_n$) by zooming the bifurcation locus of $Q_{a,u}$ (resp. $Q_{a,u_n}$).

\section{Proofs of Theorems \ref{thm:main-1} and \ref{thm:main-2}} \label{sec:proof-ThmA}

\subsection{Proof of Theorem \ref{thm:main-1}}

Any complete metrizable locally convex vector space is called a \textit{Fr\'{e}chet space}. All Banach spaces are Fr\'{e}chet. In particular, the following space is a typical Fr\'{e}chet space (see \cite[\S 1.46, p.\,34]{Rud91}):
\begin{equation}\label{equ:C-infty}
C^\infty(\R/\Z,\C):=\big\{f:\R/\Z\to\C \text{ is a } C^\infty \text{-function}\big\}.
\end{equation}
Specifically, the metric in $C^\infty(\R/\Z,\C)$ is defined as
\begin{equation}
d(f,g):=\sum_{k=0}^\infty 2^{-k}\frac{\|f-g\|_k}{1+\|f-g\|_k}, \quad\text{for } f,g\in C^\infty(\R/\Z,\C),
\end{equation}
where the seminorm $\|f\|_k$ with $k\geqslant 0$ is defined by
\begin{equation}
\|f\|_k:=\sup\{|f^{(k)}(x)|:x\in\R/\Z\}.
\end{equation}
In this Fr\'{e}chet space $C^\infty(\R/\Z,\C)$, a sequence $(f_n)$ converges to $f$ if and only if for each $k\geqslant 0$, the sequence $(f_n^{(k)})$ converges to $f^{(k)}$ uniformly.
For $0<\varepsilon<1$, two maps $f,g\in C^\infty(\R/\Z,\C)$ are said to be \textit{$\varepsilon$-close} if $d(f,g)<\varepsilon$, and in this case $f(\R/\Z)$ and $g(\R/\Z)$ are $\frac{\varepsilon}{1-\varepsilon}$-close in the Hausdorff metric.

\medskip
Recall that $\HT_N$ is the set of high type irrational numbers and $N\geqslant 1$ is a fixed large number such that Lemma \ref{lem:ABC12} holds.
For $\alpha\in\PZ$, $0<r<r_\alpha$ and $\theta\in[0,2\pi)$, by Lemma \ref{lem:rigidity}, there exists a unique pair $(a, u)$ with $a\in\C\setminus\big\{0,1,\frac{3}{2},2,3\big\}$ and $u\in\C\setminus\{0\}$ such that $Q_{a,u}=Q_{\alpha,r,\theta}$.

\begin{thm}\label{thm:A-detail}
Suppose $\alpha\in\HT_N$ is of bounded type, $0<r<r_\alpha$, $\theta\in(0,2\pi)$ and $0<\varepsilon<1$. Let $g:\R/\Z\to\C$ be the function $t\mapsto \chi_{\alpha,r,\theta}(e^{2\pi\ii t})$, where $\chi_{\alpha,r,\theta}$ is a linearizing map of the Herman ring of $Q_{\alpha,r,\theta}=Q_{a,u}$ in \eqref{equ:psi-alpha-HR}. Then, there exist an irrational number $\alpha'\in\HT_N$, a cubic rational map $Q_{a',u'}$ and a $C^\infty$-embedding $h:\R/\Z\to \C$ with the following properties:
\begin{itemize}
\item $|\alpha-\alpha'|<\varepsilon$, $|a-a'|<\varepsilon$ and $|u-u'|<\varepsilon$;
\item $h(\R/\Z)$ is a smooth degenerate Herman ring of $Q_{a',u'}$ with rotation number $\alpha'$ which is accumulated by repelling cycles from both sides; and
\item $g$ and $h$ are $\varepsilon$-close in the Fr\'{e}chet space $C^\infty(\R/\Z,\C)$.
\end{itemize}
\end{thm}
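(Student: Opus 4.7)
The plan is to build $(\alpha',a',u',h)$ as the limit of an inductively defined Cauchy sequence $(\alpha^{(n)},a_n,u_n,g_n)_{n\ge 0}$ starting with $(\alpha,a,u,g)$. Throughout the iteration $\alpha^{(n)}\in\HT_N$ will be a bounded type irrational with $r_{\alpha^{(n)}}>r$, $Q_{a_n,u_n}=Q_{\alpha^{(n)},r,\theta}$, and $g_n(t)=\chi_{\alpha^{(n)},r,\theta}(e^{2\pi\ii t})$. The key ingredient at each step is the surgery-continuity Lemma~\ref{lem:Herman} combined with the parameter rigidity Lemma~\ref{lem:rigidity}.

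For the inductive step I pick a target $r_n'\in(r,r_{\alpha^{(n)}})$, with the sequence $(r_n')$ decreasing to $r$, and apply Lemma~\ref{lem:ABC04} with a bounded-type tail whose partial quotients are all $\ge N$ to obtain a sequence $\alpha^{(n,k)}\in\HT_N$ of bounded type numbers with $\alpha^{(n,k)}\to\alpha^{(n)}$ and $r_{\alpha^{(n,k)}}\to r_n'$. Lemma~\ref{lem:Herman} then provides uniform convergence $Q_{a_{n,k},u_{n,k}}=Q_{\alpha^{(n,k)},r,\theta}\to Q_{a_n,u_n}$ on $\EC$ (so $(a_{n,k},u_{n,k})\to(a_n,u_n)$ by Lemma~\ref{lem:rigidity}) and locally uniform convergence $\chi_{\alpha^{(n,k)},r,\theta}\to\chi_{\alpha^{(n)},r,\theta}$ on a complex neighborhood of $\T$. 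By Cauchy estimates the latter upgrades to convergence $g^{(n,k)}\to g_n$ in the Fr\'echet space $C^\infty(\R/\Z,\C)$, and Lemma~\ref{lem:Herman}(c) supplies two repelling cycles of $Q_{a_{n,k},u_{n,k}}$ approaching $\chi_{\alpha^{(n)},r,\theta}(\T_{r_n'/r})$ and $\chi_{\alpha^{(n)},r,\theta}(\T_{r/r_n'})$, i.e.\ approaching $g_n(\R/\Z)$ from outside and inside respectively. Setting $(\alpha^{(n+1)},a_{n+1},u_{n+1},g_{n+1}):=(\alpha^{(n,k_n)},a_{n,k_n},u_{n,k_n},g^{(n,k_n)})$ with $k_n$ large enough, I can ensure that $|\alpha^{(n+1)}-\alpha^{(n)}|$, $|a_{n+1}-a_n|$, $|u_{n+1}-u_n|$ and $d_{C^\infty}(g_{n+1},g_n)$ are all less than $\varepsilon/2^{n+2}$, and that the two chosen repelling cycles of $Q_{a_{n+1},u_{n+1}}$ lie within distance $\varepsilon/2^{n+2}$ of $g_n(\R/\Z)$.

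Passing to the limit, $\alpha^{(n)}\to\alpha'\in\HT_N$ (irrational since its continued fraction stays infinite), $(a_n,u_n)\to(a',u')$, and $g_n\to h$ in $C^\infty(\R/\Z,\C)$, all $\varepsilon$-close to the initial data by telescoping. The functional equation $g_n(t+\alpha^{(n)})=Q_{a_n,u_n}(g_n(t))$ passes to the limit, so $h(\R/\Z)$ is a $Q_{a',u'}$-invariant smooth curve with rotation number $\alpha'$. The repelling cycles selected at each stage persist as repelling cycles of $Q_{a',u'}$, via the implicit function theorem applied to the holomorphic family $(a,u)\mapsto Q_{a,u}$, producing two sequences of repelling cycles of $Q_{a',u'}$ accumulating $h(\R/\Z)$ from both sides. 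In particular $h(\R/\Z)\subset J(Q_{a',u'})$, and the two-sided accumulation by Julia points rules out $h(\R/\Z)$ bounding any rotation domain of $Q_{a',u'}$. Non-circularity is inherited from $g(\R/\Z)$, which is not a spherical circle under the assumption $\theta\in(0,2\pi)$ (this condition breaks the Blaschke-type symmetry that would otherwise preserve a round circle), so $h(\R/\Z)$ is indeed a smooth degenerate Herman ring.

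The main obstacle is that a $C^\infty$-limit of embeddings need not remain an embedding as the Herman-ring modulus $\tfrac{1}{\pi}\log(r_{\alpha^{(n)}}/r)$ collapses toward zero. To guarantee that $h$ is a $C^\infty$-embedding, the plan is to extract a uniform lower bound $|g_n'(t)|\ge c_0>0$ along the iteration, using Koebe distortion applied to the univalent maps $\chi_{\alpha^{(n)},r,\theta}$ on a fixed complex neighborhood of $\T$; this forces the sequence $(r_n')$ to approach $r$ slowly enough to keep the distortion constants bounded. Such a bound makes $h$ a $C^\infty$-immersion, and combined with $C^1$-smallness of the corrections at each step preserves global injectivity of the $g_n$ in the limit, yielding the desired $C^\infty$-embedding.
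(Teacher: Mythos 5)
Your overall strategy coincides with the paper's: build $(\alpha(n), a_n, u_n, g_n)$ inductively using Lemma~\ref{lem:Herman} and Lemma~\ref{lem:rigidity}, with telescoping error control summing to less than $\varepsilon$, select repelling cycles via Lemma~\ref{lem:Herman}(c) that crowd the invariant curve from both sides, and pass to the limit. The concluding deductions (persistence of repelling cycles under small perturbations of $(a,u)$, the curve lying in the Julia set, non-circularity from $\theta\in(0,2\pi)$) also match the paper's.

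However, the step you flag as ``the main obstacle'' --- and the Koebe-distortion remedy you propose --- is where your argument goes astray. You suggest obtaining a uniform lower bound $|g_n'(t)|\ge c_0>0$ by applying Koebe distortion to $\chi_{\alpha(n),r,\theta}$ ``on a fixed complex neighborhood of $\T$''. There is no such fixed neighborhood: the natural domain of univalence of $\chi_{\alpha(n),r,\theta}$ is the annulus $\A_{r/r_{\alpha(n)},\,r_{\alpha(n)}/r}$, and by construction $r_{\alpha(n)}\downarrow r$, so these annuli collapse onto $\T$ and the Koebe distortion constants on any fixed neighborhood are unavailable (or blow up). Slowing the convergence of $r_n'$ to $r$ does not help, because $r_{\alpha(n)}\to r$ is exactly what makes the curve degenerate, i.e.\ it is forced, not optional. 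This Koebe argument as written would fail.

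Fortunately the entire Koebe detour is unnecessary, and the paper dispenses with it in one sentence. Since $d_{C^\infty}(g_{n+1},g_n)<\varepsilon_{n+1}$ with $\sum_n\varepsilon_n<\varepsilon$, telescoping gives $d_{C^\infty}(h,g)<\varepsilon$; in particular $h$ is $\varepsilon$-close to $g$ in the $C^1$-topology. Because $g$ is a smooth embedding of the compact manifold $\R/\Z$ and embeddings form an open subset of $C^1(\R/\Z,\C)$, shrinking $\varepsilon$ once at the outset already guarantees $h$ is an embedding --- no separate lower bound on $|g_n'|$ from distortion estimates is needed, since $|h'|\ge\inf|g'|-\varepsilon>0$ follows directly from the $C^1$-closeness. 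Your final sentence (``combined with $C^1$-smallness of the corrections at each step preserves global injectivity'') already contains the right idea; you should lead with it and drop the distortion argument entirely.

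Two minor points worth tightening: (i) to conclude $h(\R/\Z)\subset J(Q_{a',u'})$ from accumulation by repelling cycles, you need the cycles to be $\varepsilon_n$-close to $h(\R/\Z)$ in the \emph{Hausdorff} sense (the paper arranges this explicitly by placing the cycles near $\chi_{\alpha(n)}(\T_{r_{n+1}/r})$, which approximates the whole curve), not merely close to a point of it; (ii) non-circularity of $h(\R/\Z)$, not just of $g(\R/\Z)$, requires a stability argument --- the paper uses the quantity $\delta(\gamma)=\inf_a(\sup_z|a-z|-\inf_z|a-z|)$, which vanishes iff $\gamma$ is a circle and is continuous under Hausdorff perturbation, so shrinking $\varepsilon$ once more finishes this.
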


\begin{proof}
We define sequences of bounded type numbers $(\alpha(n))_{n\geqslant 0}$ in $\HT_N$, positive numbers $(\varepsilon_{n})_{n\geqslant 0}$ and $(r_{n})_{n\geqslant 0}$, cubic rational maps $(Q_{\alpha(n),r,\theta})_{n\geqslant 0}=(Q_{a(n),u(n)})_{n\geqslant 0}$ with the form \eqref{equ:Q-alpha}, and repelling cycles $(C_n^\pm)_{n\geqslant 0}$ inductively as follows.

Take $\alpha(0):=\alpha$, $\varepsilon_{0}:=\frac{1}{10} \varepsilon$, $r_{0}:=r_{\alpha}$, $Q_{\alpha(0)}:=Q_{\alpha(0),r,\theta}=Q_{a(0),u(0)}$, and let $C_0^\pm$ be the two repelling fixed points of $Q_{\alpha(0)}$ which lie on the boundaries of immediate super-attracting basins of $\infty$ and $0$ respectively.
Taking $\varepsilon>0$ smaller if necessary, we assume that $|a_*-a(0)|+|u_*-u(0)|>\varepsilon$ for any singular point\footnote{Based on the formula of $Q_{a,u}$ in Lemma \ref{lem:rigidity}, there are only finitely many choices for $(a_*,u_*)$.} $(a_*,u_*)\in\C^2$ such that $Q_{a_*,u_*}$ is not a cubic rational map.

Assume that $\alpha(n)$, $\varepsilon_{n}$, $r_n$, $Q_{\alpha(n)}:=Q_{\alpha(n),r,\theta}=Q_{a(n),u(n)}$ and $C_n^\pm$ are defined, where $\min\{r_{\alpha(n)},r_n\}>r$.
Let $\varepsilon_{n+1}<\frac{1}{10} \varepsilon_{n}$ be such that
\begin{itemize}
\item $r_{\beta}<r_{\alpha(n)}+\varepsilon_{n}$ whenever $|\beta-\alpha(n)|<\varepsilon_{n+1}$ (this is possible by the upper semicontinuity, see \eqref{equ:conf-radius}); and
\item $Q_{b,v}$ has two repelling cycles which are $\varepsilon_n$-close to $C_n^\pm$ respectively whenever $|b-a(n)|+|v-u(n)|<\varepsilon_{n+1}$ (since repelling cycles move holomorphically).
\end{itemize}
Next, taking $r_{n+1}\in\big(r,\,\min\{r_{\alpha(n)},r_n\}\big)$ sufficiently close to $r$ so that
\begin{itemize}
\item $r_{n+1}-r<\varepsilon_{n+1}$; and
\item $\chi_{\alpha(n)}(\T_{r_{n+1}/r})$ and $\chi_{\alpha(n)}(\T_{r/r_{n+1}})$ are $\varepsilon_{n+1}$-close to $\chi_{\alpha(n)}(\T)$ in the Hausdorff metric, where $\chi_{\alpha(n)}:=\chi_{\alpha(n),r,\theta}$.
\end{itemize}
Finally, by Lemma \ref{lem:Herman} and \eqref{equ:Q-a-n-u-n}, there exists a bounded type $\alpha(n+1)\in\HT_N$ such that\footnote{The number $r_{n+1}$ here corresponds to $r_0$ in Lemma \ref{lem:Herman}.}
\begin{itemize}
\item $|\alpha(n+1)-\alpha(n)|<\frac{1}{10} \varepsilon_{n+1}$;
\item $|a(n+1)-a(n)|+|u(n+1)-u(n)|<\frac{1}{10}\varepsilon_{n+1}$, where $a(n+1)$ and $u(n+1)$ are parameters such that $Q_{a(n+1),u(n+1)}=Q_{\alpha(n+1),r,\theta}$;
\item $r<r_{\alpha(n+1)}<r_{n+1}+\varepsilon_{n+1}$;
\item $Q_{\alpha(n+1)}:=Q_{\alpha(n+1),r,\theta}$ has two repelling cycles $C_{n+1}^\pm$ which are $\varepsilon_{n+1}$-close to $\chi_{\alpha(n)}(\T_{r_{n+1}/r})$ and $\chi_{\alpha(n)}(\T_{r/r_{n+1}})$ respectively, and hence are $2\varepsilon_{n+1}$-close to $\chi_{\alpha(n)}(\T)$ in the Hausdorff metric; and
\item The real-analytic functions $g_{n+1}: t \mapsto \chi_{\alpha(n+1)}(e^{2 \pi\ii t})$ and $g_{n}: t \mapsto \chi_{\alpha(n)}(e^{2 \pi\ii t})$ are $\varepsilon_{n+1}$-close in the Fr\'{e}chet space $C^\infty(\R/\Z,\C)$, where $\chi_{\alpha(n+1)}:=\chi_{\alpha(n+1),r,\theta}$.
\end{itemize}

Let
\begin{equation}\label{equ:limit-alpha-a-u}
\alpha':=\lim _{n \to \infty} \alpha(n), \quad a':=\lim _{n \to \infty} a(n) \text{\quad and\quad} u':=\lim _{n \to \infty} u(n).
\end{equation}
By construction, for $n \geqslant 0$ we have
\begin{equation}\label{equ:alpha-a-u-est}
|\alpha'-\alpha(n)|<\varepsilon_{n+1} \text{\quad and\quad} |a'-a(n)|+|u'-u(n)|<\varepsilon_{n+1}.
\end{equation}
The functions $g_{n}$ converge to a $C^{\infty}$-function $h: \R/\Z \to \C$, which is $\varepsilon$-close to $g=g_{0}$ in the Fr\'{e}chet space $C^\infty(\R/\Z,\C)$. By taking $\varepsilon$ smaller if necessary, this implies that $h$ is an embedding. Passing to the limit in
\begin{equation}
\chi_{\alpha(n)}(e^{2\pi\ii\alpha(n)} \zeta)=Q_{a(n),u(n)}\circ\chi_{\alpha(n)}(\zeta)
\end{equation}
for $\zeta\in\T$, we obtain
\begin{equation}\label{equ:smooth-conj}
\chi_{\alpha'}(e^{2\pi\ii\alpha'} \zeta)=Q_{a',u'}\circ\chi_{\alpha'}(\zeta), \text{\quad where }\chi_{\alpha'}:=\lim _{n \to \infty} \chi_{\alpha(n)}|_{\T}.
\end{equation}
Hence $\chi_{\alpha'}(e^{2\pi\ii t})=h(t)$ for $t\in\R/\Z$, and $Q_{a',u'}: h(\R/\Z)\to h(\R/\Z)$ is smoothly conjugate to the irrational rotation $\zeta\mapsto e^{2\pi\ii\alpha'} \zeta$.

By \eqref{equ:alpha-a-u-est}, $Q_{a',u'}$ has two repelling cycles $\widetilde{C}_n^\pm$ which are $\varepsilon_n$-close to $C_n^\pm$ respectively. Since $C_n^\pm$ are $2\varepsilon_n$-close to $g_{n-1}(\R/\Z)$ and $h(\R/\Z)$ is $2\varepsilon_n$-close to $g_{n-1}(\R/\Z)$ in the Hausdorff metric, it follows that $\widetilde{C}_n^\pm$ are $\varepsilon_{n-1}$-close to $h(\R/\Z)$. Note that $\widetilde{C}_n^+$ and $\widetilde{C}_n^-$ are contained in $h(\R/\Z)^\Ext$ and $h(\R/\Z)^\Int$ respectively. Thus $h(\R/\Z)$ is accumulated by repelling cycles from both sides. This implies that $h(\R/\Z)$ is contained in the Julia set of $Q_{a',u'}$ and is not the boundary of any rotation domain of $Q_{a',u'}$.

Note that the conformal angle $\theta$ of two critical points with respect to the Herman ring of $Q_\alpha$ is chosen in $(0,2\pi)$. It follows that $g(\R/\Z)$ is not a circle by Schwarz reflection.
So neither is $h(\R/\Z)$ (if necessary by taking $\varepsilon$ smaller). Indeed, since $\gamma:=g(\R/\Z)$ is not a circle, we have
\begin{equation}
\delta(\gamma):=\inf_{a\in\gamma^\Int}\Big(\sup_{z\in\gamma}|a-z|-\inf_{z\in\gamma}|a-z|\Big)>0.
\end{equation}
Taking a small $0<\varepsilon<\min\{\delta(\gamma),1\}/10$, we have $\delta(\gamma')>0$ since $\gamma':=h(\R/\Z)$ is $\frac{\varepsilon}{1-\varepsilon}$-close to $\gamma$ in the Hausdorff metric. This implies that $\gamma'$ is not a circle.
Hence $h(\R/\Z)$ is a smooth degenerate Herman ring of $Q_{a',u'}$.
This completes the proof of Theorem \ref{thm:A-detail} and hence Theorem \ref{thm:main-1}.
\end{proof}

\begin{rmk}
(1) By \eqref{equ:alpha-a-u-est} and the definition of $\varepsilon_{n+1}$, we have $r_{\alpha'}<r_{\alpha(n)}+\varepsilon_{n}$. Since $r<r_{\alpha(n)}<r_{n}+\varepsilon_{n}<r+2\varepsilon_{n}$ for $n\geqslant 1$ and $\varepsilon_{n} \to 0$, we have $r_{\alpha(n)} \to r$ and hence $r_{\alpha'} \leqslant r$.
On the other hand, by the upper semicontinuity (see \eqref{equ:conf-radius}), we have $r_{\alpha'} \geqslant \lim _{n \to \infty} r_{\alpha(n)}$. Hence $r_{\alpha'}=r>0$ and $\alpha'$ is a Brjuno number.

\medskip
(2) Note that the smooth degenerate Herman ring $h(\R/\Z)$ does not contain any critical points of $Q_{a',u'}$ since an invariant Jordan curve cannot be smooth at both the critical point and the critical value. This can be also obtained from another observation: if $\varepsilon$ is small enough, the set of critical points of $Q_{a',u'}$ is sufficiently close to that of $Q_{a,u}$ while the latter has a definite positive distance to the invariant Jordan curve $g(\R/\Z)$.
\end{rmk}

\subsection{Proof of Theorem \ref{thm:main-2}}

For a Brjuno number $\alpha$, recall that $\Delta_\alpha$ is the Siegel disk of the quadratic polynomial $P_\alpha(z)=e^{2\pi\ii\alpha}z+z^2$.
In this subsection we prove a stronger result than Theorem \ref{thm:main-2}:

\begin{thm}\label{thm:Julia-positive}
For any $\varepsilon>0$ and bounded type $\alpha\in\HT_N$, there exist a Brjuno number $\alpha'\in\HT_N$ with $|\alpha-\alpha'|<\varepsilon$ and a cubic rational map $Q_{a',u'}$ with the following properties:
\begin{enumerate}
\item $Q_{a',u'}$ has a smooth degenerate Herman ring $\gamma'$ with rotation number $\alpha'$;
\item $\partial\Delta_{\alpha'}$ is smooth and $J(P_{\alpha'})$ has positive area;
\item There exists a quasiconformal mapping defined on $\EC$ which conjugates $Q_{a',u'}:\overline{(\gamma')^{\Ext}}\to\EC$ (resp. $Q_{a',u'}:\overline{(\gamma')^{\Int}}\to\EC$) to $P_{\alpha'}:\EC\setminus\Delta_{\alpha'}\to\EC$ (resp. $P_{-\alpha'}:\EC\setminus\Delta_{-\alpha'}\to\EC$); and
\item $J(Q_{a',u'})$ has positive area and $Q_{a',u'}$ has no irrationally indifferent periodic points, no Herman rings, and is not renormalizable.
\end{enumerate}
\end{thm}

\begin{proof}
(a) For any given $0<r<r_\alpha$ and $\theta\in(0,2\pi)$, let $(\alpha(n))_{n\geqslant 0}$ be the inductively defined sequence of bounded type numbers in the proof of Theorem \ref{thm:A-detail}.  Denote $\alpha':=\lim_{n\to\infty}\alpha(n)$. We still use the same notations as in the proof of Theorem \ref{thm:A-detail}. Then the cubic rational map $Q_{a',u'}$ has a smooth degenerate Herman ring $\gamma'=h(\R/\Z)$ with rotation number $\alpha'$.

\medskip
(b) We choose a sequence of numbers $(\varepsilon_{n}')_{n\geqslant 1}$ in $(0,1)$ such that $\prod_{n\geqslant 1}\left(1-\varepsilon_{n}'\right)>0$.
For $(\alpha(n))_{n\geqslant 0}$, by Lemmas \ref{lem:ABC04} and \ref{lem:ABC12}, one can require further that this sequence satisfies Part (a) and for all $n\geqslant 1$,
\begin{itemize}
\item $\area\left(L_{\alpha(n)}(r)\right) \geqslant\left(1-\varepsilon_{n}'\right) \area\left(L_{\alpha(n-1)}(r)\right)$; and
\item The real-analytic functions $t\mapsto\phi_{\alpha(n)}(r e^{2\pi\ii t})$ and $t\mapsto\phi_{\alpha(n-1)}(r e^{2\pi\ii t})$ are $\varepsilon_n'$-close in the Fr\'{e}chet space $C^\infty(\R/\Z,\C)$, where $\phi_{\alpha(n)}$ is the normalized linearizable map satisfying \eqref{equ:linearization}.
\end{itemize}

By \cite[p.\,5]{ABC04}, $\partial\Delta_{\alpha'}$ is smooth.
By Remark (1) after Theorem \ref{thm:A-detail}, we have $r_{\alpha'}=r$, $\Delta_{\alpha'}(r)=\Delta_{\alpha'}$ and $L_{\alpha'}(r)=J(P_{\alpha'})$.
According to \cite[p.\,735]{BC12}, we have $\limsup\limits_{n\to\infty} L_{\alpha(n)}(r)\subset L_{\alpha'}(r)=J(P_{\alpha'})$ and hence
\begin{equation}
\area\big(J(P_{\alpha'})\big) \geqslant \area\Big(\limsup_{n\to\infty} L_{\alpha(n)}(r)\Big) \geqslant
\area\big(L_{\alpha}(r)\big) \cdot \prod_{n\geqslant 1}(1-\varepsilon_{n}')>0.
\end{equation}

Then there exists an real-analytic map $\xi_\alpha:\T\to\gamma$ which conjugates the irrational rotation $\zeta\mapsto e^{2\pi\ii\alpha} \zeta$ to $Q_\alpha: \gamma\to \gamma$.

(c) We only consider $(\gamma')^\Ext$ since the proof for $(\gamma')^\Int$ is analogous. By \eqref{equ:smooth-conj}, the smooth map $\chi_{\alpha'}:\T\to\gamma'$ conjugates the irrational rotation $\zeta\mapsto e^{2\pi\ii\alpha'} \zeta$ to $Q_{a',u'}: \gamma'\to \gamma'$.
Hence $\chi_{\alpha'}^{-1}:\gamma'\to\T$ can be extended continuously to $\Theta:\overline{(\gamma')^{\Int}}\to\overline{\D}$ such that $\Theta:(\gamma')^{\Int}\to\D$ is quasiconformal (see \cite[Proposition 2.30(a)]{BF14a}).
Define
\begin{equation}\label{equ:G-quasiregular}
G(z):=
\left\{
\begin{array}{ll}
Q_{a',u'}(z)  &~~~~~~~\text{if}~z\in \overline{(\gamma')^{\Ext}}, \\
\Theta^{-1}\big(e^{2\pi\ii\alpha'}\Theta(z)\big) &~~~~~~\text{if}~z\in(\gamma')^{\Int}.
\end{array}
\right.
\end{equation}
Then $G:\EC\to\EC$ is a quasi-regular map.
By a similar proof to Corollary \ref{cor:zero-area}, there exists a quasiconformal mapping $\Upsilon:\EC\to\EC$ such that $\Upsilon\circ G\circ\Upsilon^{-1}=P_{\alpha'}$. In particular, $\Upsilon$ conjugates $Q_{a',u'}:\overline{(\gamma')^{\Ext}}\to\EC$ to $P_{\alpha'}:\EC\setminus\Delta_{\alpha'}\to\EC$.

\medskip
(d)
By Parts (b) and (c), we conclude that $J(Q_{a',u'})$ has positive area since quasiconformal mappings are absolutely continuous with respect to $2$-dimensional Lebesgue measure.
We prove the rest statement by contradiction. 

Suppose $Q_{a',u'}$ has a periodic rotation domain $W$. Without loss of generality, we assume that $W\subset (\gamma')^{\Ext}$. Since $\gamma'$ is a smooth degenerate Herman ring, by Part (c),
\begin{equation}\label{equ:MV}
\MV:=(\gamma')^{\Ext}\cap\Big(\bigcup_{k\geqslant 1} G^{-k}\big((\gamma')^{\Int}\big)\Big)
\end{equation}
is disjoint from the post-critical set $\MP(Q_{a',u'})$ of $Q_{a',u'}$ and $\partial{W}\subset \MV$.
However, by \cite[Theorem 11.17 and Lemma 15.7]{Mil06}, $\partial{W}$ is contained in $\MP(Q_{a',u'})$, which is a contradiction.

Suppose $Q_{a',u'}$ has a Cremer cycle $\MO$. Obviously $\MO\cap\gamma'=\emptyset$. Without loss of generality, we assume that $\MO\cap(\gamma')^{\Ext}\neq\emptyset$. The argument is divided into two cases. If $\MO\cap (\gamma')^\Int\neq\emptyset$, then $\MO\cap\MV\neq\emptyset$. This is impossible since each point of $\MO$ is contained in the post-critical set $\MP(Q_{a',u'})$ \cite[Theorem 11.17]{Mil06}.
If $\MO\cap (\gamma')^\Int=\emptyset$, then by Part (c), $P_{\alpha'}$ has two disjoint indifferent periodic cycles, which contradicts the Fatou-Shishikura inequality \cite[Corollary 1]{Shi87}.

By definition, see \cite[\S 7]{McM94b} or \cite{DH85b}, $Q_{a',u'}$ is renormalizable if there exist an integer $p\geqslant 1$ and two Jordan domains $U$, $V$ such that
\begin{itemize}
\item $U$ is compactly contained in $V$ and $Q_{a',u'}^{\circ p}:U\to V$ is a proper holomorphic map of degree at least two; and
\item The small filled-in Julia set $K_0:=\bigcap_{k\geqslant 0}Q_{a',u'}^{-kp}(U)$ is connected and contains a critical point of $Q_{a',u'}$.
\end{itemize}
Suppose $Q_{a',u'}$ is renormalizable as above. Since $Q_{a',u'}^{\circ p}:\gamma'\to\gamma'$ is conjugate to an irrational rotation, we have $\gamma'\cap K_0=\emptyset$ or $\gamma'\subset K_0$.
If $\gamma'\subset K_0$, then $U$ must contain $(\gamma')^{\Int}$ or $(\gamma')^{\Ext}$. In either case we have $V=\EC$, which is a contradiction. Hence we have $\gamma'\cap K_0=\emptyset$.
Without loss of generality, we assume that 
\begin{equation}
\MK\cap(\gamma')^{\Ext}\neq\emptyset, \text{\quad where }\MK=\bigcup_{i=0}^{p-1}Q_{a',u'}^{\circ i}(K_0).
\end{equation}
The argument is similar to the Cremer case. If $\MK\cap (\gamma')^\Int\neq\emptyset$, then $\MK\cap\MV\neq\emptyset$, where $\MV$ is defined in \eqref{equ:MV}. This is impossible since $\MK\cap \MP(Q_{a',u'})\neq\emptyset$.
If $\MK\cap (\gamma')^\Int=\emptyset$, then by Part (c), $P_{\alpha'}$ is renormalizable, which is a contradiction.
The proof is complete and hence Theorem \ref{thm:main-2} holds.
\end{proof}

\begin{rmk}
(1) As an immediate corollary of Theorem \ref{thm:Julia-positive}(c), the post-critical set $\PC(Q_{a',u'})$ of $Q_{a',u'}$ in $\C\setminus\{0\}$ is the disjoint union of the smooth degenerate Herman ring $\gamma'$ and the quasiconformally homeomorphic images of $\PC(P_{\alpha'})\setminus\partial\Delta_{\alpha'}$ and $\PC(P_{-\alpha'})\setminus\partial\Delta_{-\alpha'}$.

\medskip
(2) The topology of $\PC(P_\alpha)$ is characterized in \cite{Che22b} for any high type number $\alpha$. In particular, in the case of Theorem \ref{thm:Julia-positive}, $\PC(P_{\alpha'})$ is a \textit{one-sided hairy Jordan curve}. Note that $Q_{a',u'}$ is the limit of $Q_{\alpha(n),r,\theta}$ as $n\to\infty$. The parameter $\theta\in(0,2\pi)$ measures how the two one-sided hairy Jordan curves are pasted together.
\end{rmk}

\section{Conclusion}

In this section we supplement some results and list some problems about (degenerate) Herman rings.

\subsection{Smooth Herman rings vs positive area}

Note that Theorem \ref{thm:Julia-positive}(b) indicates the existence of quadratic Siegel polynomials having a Julia set with positive area and a Siegel disk with smooth boundary.
A fixed Herman ring of a rational map is called \textit{non-symmetric} if it does not contain any invariant circle.
By a slight modification of the proofs of Theorems \ref{thm:A-detail} and \ref{thm:Julia-positive}, we have the following result.

\begin{cor}\label{cor:HR-smooth}
There exist cubic rational maps having a non-symmetric smooth Herman ring and a Julia set with positive area.
\end{cor}

Indeed, the only difference from Theorems \ref{thm:A-detail} and \ref{thm:Julia-positive} is that one should choose a decreasing sequence $(r_{n})_{n\geqslant 0}$ satisfying $r_n\to \widetilde{r}\in (r, r_\alpha)$ (note that $r_{n}\to r$ in Theorem \ref{thm:A-detail}). Since the rest place is completely the same, we omit the details.

\subsection{Cubic Blaschke products}\label{subsec:Blaschke}

Let $a>3$ be given. In \cite{Avi03}, Avila proved the existence of (symmetric) smooth Herman rings in the family of cubic Blaschke products:
\begin{equation}\label{equ:Blaschke}
f_t(z):=e^{2\pi\ii t}z^2\frac{z-a}{1-az}, \text{\quad where } t\in\R.
\end{equation}
In his proof the quasiconformal surgery is not needed since  $f_t$ serves the family with symmetric Herman rings containing the invariant unit circle automatically.

Similar to Corollary \ref{cor:HR-smooth} (based on Theorems \ref{thm:A-detail} and \ref{thm:Julia-positive}), we have

\begin{cor}\label{cor:HR-smooth-B}
There exists $t$ such that $f_t$ has a smooth Herman ring and a Julia set with positive area.
\end{cor}

Similar to the proof of Theorem \ref{thm:Julia-positive}, we have

\begin{cor}\label{cor:HR-area}
There exists $t$ such that $f_t$ has a nowhere dense Julia set of positive area, and that $f_t$ has no irrationally indifferent periodic points, no Herman rings, and is not renormalizable.
\end{cor}

To find explicit formulas of rational maps having degenerate Herman rings, in this paper we use the cubic families $Q_{\alpha,r,\theta}$ and $Q_{a,u}$ (see Lemma \ref{lem:rigidity}).
If we fix the conformal angle $\theta:=\pi$ between two critical points (see \S\ref{subsec:rigidity}), then one can use \textit{antipode preserving} cubic maps to replace $Q_{a,u}$ with (see \cite{BBM18}):
\begin{equation}
h_q(z)=z^2\frac{q-z}{1+\overline{q} z}, \text{\quad where }q\in\C.
\end{equation}

\subsection{Some problems}

From Theorem \ref{thm:A-detail}, we know that the rotation numbers of smooth degenerate Herman rings constructed in this paper are all Brjuno type. Hence a natural problem is:

\begin{prob}
Is there a smooth degenerate Herman ring whose rotation number is not of Brjuno type?
\end{prob}

Besides smooth Siegel disks, Buff and Ch\'{e}ritat also constructed some quadratic Siegel disks whose boundaries have various degrees of regularity in \cite{BC07}. In particular, they proved the existence of quadratic Siegel disk boundaries which are Jordan curves containing no critical point but not quasi-circles. Then we have:

\begin{prob}
Is there a degenerate Herman ring which is not a quasi-circle?
\end{prob}

The question of Eremenko in \cite[Question 1.7, p.\,2]{Ere20} was asked not only for rational maps, but also for transcendental meromorphic functions:

\begin{prob}\label{prob:trans}
Are there smooth degenerate Herman rings for transcendental meromorphic functions?
\end{prob}

From the results mentioned in this paper, it is hard to obtain an answer to Problem \ref{prob:trans} directly.
Indeed, for this problem, one obstacle is to control the conformal radii of Siegel disks of two different maps at the same time (one of them is transcendental), and the second is to establish a result about the control of the loss of area of the sets which are similar to filled-in Julia sets (see Lemmas \ref{lem:ABC12} and \ref{lem:control-loss}).

\bibliographystyle{amsalpha}
\bibliography{E:/Latex-model/Ref1}

\end{document}